\theoremstyle{plain}
\newtheorem{theorem}{\indent\sc Theorem}[section]\newtheorem{lemma}[theorem]{\indent\sc Lemma}\newtheorem{corollary}[theorem]{\indent\sc Corollary}\newtheorem{proposition}[theorem]{\indent\sc Proposition}
\theoremstyle{definition}\newtheorem{definition}[theorem]{\indent\sc Definition}
\def\address#1#2{\begingroup\noindent\parbox[t]{7.8cm}{\small{\scshape\ignorespaces#1}\par\vskip1ex\noindent\small{\itshape E-mail}\/: #2\par\vskip4ex}\hfill\endgroup}
\newcommand{\hfl}[2]{\smash{\mathop{\hbox to 9
mm{\rightarrowfill}}\limits^{\scriptstyle #1}_{\scriptstyle #2}}}
\title{\uppercase{The Gras conjecture in function fields by Euler systems}}
\author{\textsc{Hassan OUKHABA \& St\'ephane VIGUI\'E}}
\begin{document}
\maketitle
\begin{abstract}\footnote[0]{2000
\textit{Mathematics Subject Classification. Primary 11R58, Secondary 11G09}}
\footnote[1]{\textit Key words. Function fields, Stark units, Gras conjecture, Euler systems, Drinfel'd modules}
We use Euler systems to prove the Gras conjecture for groups generated by Stark units in global function fields. The techniques applied here are classical and go back to Thaine, Kolyvagin and Rubin. We obtain our Euler systems from the torsion points of sign-normalized Drinfel'd modules.
\end{abstract}
\bibliographystyle{plain}
\section{Introduction}\label{intro}

Let $k$ be a global function field of characteristic a prime number $\rho$. Let $\mathbb{F}_q$, $q:=\rho^n$, be the field of constants of $k$. Let $\infty$ be a place of $k$ and let $\mathcal{O}_k$ be the Dedekind ring of functions $f\in k$ regular outside $\infty$. We denote by $k_\infty$ the completion of $k$ at $\infty$. Let us also fix $K\subset k_\infty$ a finite abelian extension of $k$, and write $\mathrm{G}$ for the Galois group of $K/k$, $\mathrm{G}:=\mathrm{Gal}(K/k)$. The inclusion $K\subset k_\infty$ simply means that the place $\infty$ splits completely in $K$. Let $\mathcal{O}_K$ and $\mathcal{O}_K^\times$ be respectively the integral closure of $\mathcal{O}_k$ in $K$ and the group of units of $\mathcal{O}_K$. One may use Stark units to define a subgroup $\mathcal{E}_K$ of $\mathcal{O}_K^\times$ such that the factor group $\mathcal{O}_K^\times/\mathcal{E}_K$ is finite (See the definition of $\mathcal{E}_K$ in the next section). Let $H\subset k_\infty$ be the maximal abelian unramified extension of $k$ contained in $k_\infty$. In \cite{ouk91} it is proved that in case $K\subset H$ we have
\begin{equation}\label{indice1}
[\mathcal{O}_K^\times:\mathcal{E}_K]=\frac{h(\mathcal{O}_K)}{[H:K]},
\end{equation} 
where $h(\mathcal{O}_K)$ is the order of the ideal class group of $\mathcal{O}_K$. In the general case, one may obtain (a complicated formula for) the quotient $[\mathcal{O}_K^\times:\mathcal{E}_K]/h(\mathcal{O}_K)$ in terms of numerical invariants of $K/k$, exactly as the first author did in \cite[formula (3.3)]{ouk09}. Let $\mathrm{Cl}(\mathcal{O}_K)$ be the ideal class group of $\mathcal{O}_K$. Recently, in \cite{Vig}, the second author used his notion of index-module to prove the following remarkable result. Let $\mathrm{g}:=[K:k]$, then, for every nontrivial irreducible rational character $\Psi$ of $\mathrm{G}$ we have
\begin{equation}\label{indice2}
\bigl[e_{\Psi}\bigl(\mathbb{Z}[\mathrm{g}^{-1}]\otimes_{\mathbb{Z}}\mathcal{O}_K^\times\bigr) :e_{\Psi}\bigl(\mathbb{Z}[\mathrm{g}^{-1}]\otimes_{\mathbb{Z}}\mathcal{E}_K\bigr)\bigr]=\#[e_{\Psi}\bigl(\mathbb{Z}[\mathrm{g}^{-1}]\otimes_{\mathbb{Z}}\mathrm{Cl}(\mathcal{O}_K)\bigr)],
\end{equation}  
where $e_\Psi$ is the idempotent of $\mathbb{Z}[\mathrm{g}^{-1}][\mathrm{G}]$ associated to $\Psi$. By $\# X$ we mean the cardinality of the finite set $X$. The formula (\ref{indice2}) may be considered as a weak form of the Gras conjecture for $\mathcal{E}_K$.\par
In this paper we use Euler systems to prove the Gras conjecture for $\mathcal{E}_K$, for every prime number $p$, $p\nmid\rho[K:k]$ and every irreducible $\mathbb{Z}_p$-character of $\mathrm{G}$, not in the set $\Xi_p$ defined as follows. If $p\neq\rho$ is a prime number then we denote by $\mu_p$ the group of $p$-th roots of unity. If $p\,\vert\,[H:k]$ and $\mu_p\subset K$ then we denote by $\omega$ the Teichmuller character giving the action of $\mathrm{G}$ on $\mu_p$. Let $f$ be the order of $q$ in $(\mathbb{Z}/p\,\mathbb{Z})^\times$. Then $\mathbb{F}_q(\mu_p)=\mathbb{F}_{q^f}$ and the order of $\omega$ is $f$. We define
\begin{equation*}
\Xi_p:=
\begin{cases}
\emptyset & \mathrm{if}\ \mu_p\not\subset K\ \mathrm{or}\ p\nmid[H:k],\\
\{\omega^i,\ i\in(\mathbb{Z}/f\mathbb{Z})^\times\} & \mathrm{if}\ p\,\vert\,[H:k]\ \mathrm{and}\ \mu_p\subset K.
\end{cases}
\end{equation*}  
\begin{theorem}\label{tresgras} Let $p$ be a prime number such that $p\nmid \rho[K:k]$. Let $\chi$ be a nontrivial irreducible $\mathbb{Z}_p$-character of $\mathrm{G}$ such that $\chi\not\in\Xi_p$. Then we have
\begin{equation}\label{gras}
\bigl[e_{\chi}\bigl(\mathbb{Z}_p\otimes_{\mathbb{Z}}\mathcal{O}_K^\times\bigr) :e_{\chi}\bigl(\mathbb{Z}_p\otimes_{\mathbb{Z}}\mathcal{E}_K\bigr)\bigr]=\#[e_{\chi}\bigl(\mathbb{Z}_p\otimes_{\mathbb{Z}}\mathrm{Cl}(\mathcal{O}_K)\bigr)],
\end{equation}
where $e_\chi$ is the idempotent of $\mathbb{Z}_p[\mathrm{G}]$ associated to $\chi$.
\end{theorem}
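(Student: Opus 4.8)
The plan is to derive the theorem by confronting a one-sided divisibility coming from an Euler system with the weak form (\ref{indice2}) of the Gras conjecture established in \cite{Vig}; in this argument (\ref{indice2}) plays exactly the role that the analytic class number formula plays in the classical cyclotomic case. Throughout put $A:=\mathbb{Z}_p\otimes_{\mathbb{Z}}\mathrm{Cl}(\mathcal{O}_K)$, $U:=\mathbb{Z}_p\otimes_{\mathbb{Z}}\mathcal{O}_K^\times$ and $C:=\mathbb{Z}_p\otimes_{\mathbb{Z}}\mathcal{E}_K$. Since $p\nmid\mathrm{g}$, the ring $\mathbb{Z}_p[\mathrm{G}]$ is a finite product of complete discrete valuation rings unramified over $\mathbb{Z}_p$, with one factor $\mathcal{O}_\chi:=\mathbb{Z}_p[\mathrm{G}]e_\chi$ for each $\mathbb{Z}_p$-character $\chi$; moreover, for every $\mathbb{Q}$-irreducible character $\Psi$ of $\mathrm{G}$ one has $e_\Psi=\sum_{\chi\mid\Psi}e_\chi$ in $\mathbb{Z}_p[\mathrm{G}]$, the sum running over the $\mathbb{Z}_p$-characters $\chi$ lying above $\Psi$, whence $e_\Psi(\mathbb{Z}_p\otimes_{\mathbb{Z}}M)=\bigoplus_{\chi\mid\Psi}e_\chi(\mathbb{Z}_p\otimes_{\mathbb{Z}}M)$ for any $\mathbb{Z}[\mathrm{G}]$-module $M$. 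I also record that the characters $\omega^i$ with $i\in(\mathbb{Z}/f\mathbb{Z})^\times$ take values in $\mu_f\subset\mathbb{Z}_p^\times$ and are precisely the $\mathbb{Z}_p$-characters lying above the single $\mathbb{Q}$-irreducible character of $\omega$; consequently, if $\chi$ is nontrivial and $\chi\notin\Xi_p$, and if $\Psi$ denotes the $\mathbb{Q}$-irreducible character with $\chi\mid\Psi$, then $\Psi$ is nontrivial, $\Psi$ is not the character of $\omega$, and every $\mathbb{Z}_p$-character $\chi'$ above $\Psi$ is again nontrivial and outside $\Xi_p$.

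The core of the argument is to prove, for every nontrivial $\chi\notin\Xi_p$, that $\#(e_\chi A)$ divides $[e_\chi U:e_\chi C]$. For this I would use the Euler system constructed in the earlier sections from the torsion points of a fixed sign-normalized Drinfel'd module of rank one over $\mathcal{O}_k$: a family $(\kappa_{\mathfrak a})$ indexed by the squarefree ideals $\mathfrak a$ of $\mathcal{O}_k$ prime to $p$ and to the conductor of $K$, with $\kappa_{\mathfrak a}$ a unit in a suitable finite abelian layer $K_{\mathfrak a}$ over $K$, whose bottom value $\kappa_{(1)}$ is (a power of) the Stark unit entering the definition of $\mathcal{E}_K$ --- so that $e_\chi\mathcal{E}_K$ is the $\mathcal{O}_\chi$-span of $e_\chi\kappa_{(1)}$, up to the $e_\chi$-part of the group of constants, which is trivial for $\chi\notin\Xi_p$ --- and which satisfies the Euler system norm relations $N_{K_{\mathfrak a\mathfrak l}/K_{\mathfrak a}}\kappa_{\mathfrak a\mathfrak l}=\kappa_{\mathfrak a}^{\mathrm{Frob}_{\mathfrak l}-1}$ (suitably normalized) together with the congruences modulo the primes above $\mathfrak l$ needed for Kolyvagin's derivative classes. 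Granting this, one runs the Thaine--Kolyvagin--Rubin machine: fix a large integer $M$, work with the ``Kolyvagin primes'' $\mathfrak l$ of $k$ that split completely in $K$, satisfy $N\mathfrak l\equiv 1\pmod{p^M}$ and have a prescribed Frobenius in the extension of $K$ cut out by $A/p^MA$ and by the $\kappa$'s; form the derivative operators $D_{\mathfrak a}=\prod_{\mathfrak l\mid\mathfrak a}\sum_{j}j\,\sigma_{\mathfrak l}^{\,j}$; and use the congruences and norm relations to produce from $D_{\mathfrak a}\kappa_{\mathfrak a}$ well-defined classes in $K^\times/(K^\times)^{p^M}$ whose divisors are divisible by $p^M$ away from $\mathfrak a$. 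Feeding these classes into the usual descending induction on $\mathcal{O}_\chi$-length, with the existence of the required primes $\mathfrak l$ guaranteed by the Chebotarev density theorem for function fields, bounds $\#(e_\chi A)$ by the $\mathcal{O}_\chi$-index of $e_\chi\kappa_{(1)}$ in $e_\chi U$, which is the asserted divisibility.

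The hypotheses enter precisely in this step. The assumption $p\nmid\rho\mathrm{g}$ makes each $\mathcal{O}_\chi$ an unramified discrete valuation ring, so that ``index equals length'' and the semisimplicity of $\mathbb{Z}_p[\mathrm{G}]$ are available, and it also forces $p$ to be prime to the order of the group of constants, so that the torsion of $U$ does not interfere with the relevant $e_\chi$-components. The exclusion of $\chi\in\Xi_p$ removes the degenerate configuration in which $\mu_p\subset K$ and $p\mid[H:k]$: there the action of $\mathrm{G}$ on $\mu_p$, which is a constant-field (hence unramified) phenomenon taking place inside $H$, interacts with the Drinfel'd-torsion Euler system in a ``trivial zero'' fashion, and the bottom element $\kappa_{(1)}$ no longer controls the $\omega^i$-components. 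Carrying this step out cleanly --- in particular verifying that the Euler system has the correct index at the bottom layer and that the list of exceptional characters is exactly $\Xi_p$ --- is, I expect, the principal obstacle.

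To conclude, let $\chi$ be as in the statement and let $\Psi$ be the $\mathbb{Q}$-irreducible character of $\mathrm{G}$ with $\chi\mid\Psi$; by the first paragraph $\Psi$ is nontrivial. Since $p\nmid\mathrm{g}$, the identity (\ref{indice2}) may be tensored with $\mathbb{Z}_p$ over $\mathbb{Z}[\mathrm{g}^{-1}]$, and decomposing the $e_\Psi$-parts over the $\mathbb{Z}_p$-characters $\chi'$ above $\Psi$ it becomes
\[
\prod_{\chi'\mid\Psi}\bigl[e_{\chi'}U:e_{\chi'}C\bigr]\;=\;\prod_{\chi'\mid\Psi}\#\bigl(e_{\chi'}A\bigr).
\]
By the first paragraph every $\chi'$ above $\Psi$ is nontrivial and outside $\Xi_p$, so the divisibility just proved applies to each factor on the right-hand side; since the two products are equal, each of these divisibilities must be an equality, and in particular $[e_\chi U:e_\chi C]=\#(e_\chi A)$, which is (\ref{gras}).
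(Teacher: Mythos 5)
Your proposal is correct and takes essentially the same approach as the paper: you combine the index formula (\ref{indice2}) from \cite{Vig} with a one-sided Euler-system divisibility $\#\mathrm{Cl}(\mathcal{O}_K)_{\chi'}\mid\#(\mathcal{O}_K^\times/\mathcal{E}_K)_{\chi'}$ (the paper's Theorem \ref{casfacile}, proved via the Thaine--Kolyvagin--Rubin machinery with Drinfel'd-module torsion as input), and then deduce equality component-by-component from the product formula over all $\chi'\mid\Psi$, observing — as the paper does implicitly — that $\Xi_p$ is a full Galois orbit so that $\chi\notin\Xi_p$ forces every conjugate $\chi'$ to avoid $\omega$. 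The only difference is organizational: you sketch the Euler-system divisibility inline, whereas the paper imports it as Theorem \ref{casfacile} proved in Section 4.
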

The proof of this theorem is given at the end of the paper. The formula (\ref{gras}) is proved first by Keqin Feng and Fei Xu in \cite{Feng96} when  $k=\mathbb{F}_q(T)$ is a rational function field in one variable, $\infty$ is the place associated to the unique pole of $(1/T)$, $K=H_{\mathfrak{m}}$ for some ideal $\mathfrak{m}$ of $\mathcal{O}_k=\mathbb{F}_q[T]$ (the field $H_{\mathfrak{m}}$ is defined below) and $p\nmid q(q-1)[K:k]$. To obtain their result, Keqin Feng and Fei Xu also used the method of Euler systems. 
\section{The group $\mathcal{E}_K$}\label{the group}
For each nonzero ideal $\mathfrak{m}$ of $\mathcal{O}_k$, we denote by $H_{\mathfrak{m}}$ the maximal abelian extension of $k$ contained in $k_\infty$, such that the conductor of $H_{\mathfrak{m}}/k$ divides $\mathfrak{m}$. The function field version of the abelian conjectures of Stark, proved by P.\,Deligne in \cite{Tate84} by using \'etale cohomology or by D.\,Hayes in \cite[Theorem 1.1]{Hay85} by using Drinfel'd modules, asserts, for any $\mathfrak{m}\not\in\{(0),\mathcal{O}_k\}$, the existence of an element $\varepsilon=\varepsilon_{\mathfrak{m}}\in H_{\mathfrak{m}}$, unique up to a root of unity such that
\begin{itemize}
\item[(i)] If we set $w_\infty=q^{d_\infty}-1$, where $d_\infty$ is the degree of $\infty$,
then the extension $H_{\mathfrak{m}}(\varepsilon^{1/w_\infty})/k$ is abelian.
\item[(ii)] If $\mathfrak{m}$ is divisible by two prime ideals then
$\varepsilon$ is a unit of $\mathcal{O}_{H_{\mathfrak{m}}}$. If $\mathfrak{m}=\mathfrak{q}^e$, where $\mathfrak{q}$ is a prime ideal then 
\begin{equation*}
\varepsilon\mathcal{O}_{H_{\mathfrak{m}}}=(\mathfrak{q})_{\mathfrak{m}}^{\frac{w_\infty}{w_k}}\end{equation*}
where $w_k:=q-1$ and $(\mathfrak{q})_{\mathfrak{m}}$ is the product of the prime ideals of $\mathcal{O}_{H_{\mathfrak{m}}}$ which divide $\mathfrak{q}$. 
\item[(iii)] We have
\begin{equation}\label{kronecker}
L_{\mathfrak{m}}(0,\chi)=
\frac{1}{w_\infty}\sum_{\sigma\in\mathrm{Gal}(H_{\mathfrak{m}}/k)
}\chi(\sigma)v_\infty(\varepsilon^\sigma),
\end{equation}
for all complex irreducible characters $\chi$ of $\mathrm{Gal}(H_{\mathfrak{m}}/k)$.
\end{itemize}
Here $s\longmapsto L_{\mathfrak{m}}(s,\chi)$ is the
$L$-function associated to $\chi$, defined for the complex numbers
$s$ such that $Re(s)>1$, by the Euler product
\begin{equation*}
L_{\mathfrak{m}}(s,\chi)=
\prod_{\mathfrak{v}\nmid\mathfrak{m}}\bigl(1-\chi(\sigma_{\mathfrak{v}})N(\mathfrak{v})^{-s}\bigr)^{-1},
\end{equation*}
where $\mathfrak{v}$ runs through all the places of $k$ not dividing $\mathfrak{m}$. For such a place, $\sigma_{\mathfrak{v}}$ and $N(\mathfrak{v})$ are the Frobenius automorphism of $H_{\mathfrak{m}}/k$ and the order of the residue field at $\mathfrak{v}$ respectively. Let us remark that $\sigma_{\infty}=1$ and $N(\infty)=q^{d_\infty}$.\par
For any finite abelian extension $L$ of $k$, we denote by $\mu_L$ the group of roots of unity (nonzero constants) in $L$, by $w_L$ the order of $\mu_L$ and by $\mathcal{F}_L\subset\mathbb{Z}[\mathrm{Gal}(L/k)]$ the annihilator of $\mu_L$. The description of $\mathcal{F}_L$ given in \cite[Lemma 2.5]{Hay85} and the property (i) of $\varepsilon_{\mathfrak{m}}$ imply, in particular, that for any $\eta\in\mathcal{F}_{H_{\mathfrak{m}}}$ there exists $\varepsilon_{\mathfrak{m}}(\eta)\in H_{\mathfrak{m}}$ such that 
\begin{equation*}
\varepsilon_{\mathfrak{m}}(\eta)^{w_\infty}=\varepsilon_{\mathfrak{m}}^\eta.
\end{equation*}
\begin{definition}\label{ek} Let $\mathcal{P}_K$ be the subgroup of $K^\times$ generated by $\mu_K$ and by all the norms 
\begin{equation*}
N_{H_{\mathfrak{m}}/H_{\mathfrak{m}}\cap K}(\varepsilon_{\mathfrak{m}}(\eta)),
\end{equation*}
where $\mathfrak{m}$ is any nonzero proper ideal of $\mathcal{O}_k$ and $\eta$ is any element of $\mathcal{F}_{H_{\mathfrak{m}}}$. We define 
\begin{equation*}
\mathcal{E}_K:=\mathcal{P}_K\cap\mathcal{O}_K^\times.
\end{equation*}
\end{definition}
\section{The Euler system}
For any finite abelian extension $F$ of $k$, and any fractional ideal $\mathfrak{a}$ of $\mathcal{O}_k$ prime to the conductor of $F/k$, we denote by $(\mathfrak{a}, F/k)$ the automorphism of $F/k$ associated to $\mathfrak{a}$ by the Artin map. If $\mathfrak{a}\subset\mathcal{O}_k$ then we denote by $N(\mathfrak{a})$ the cardinality of $\mathcal{O}_k/\mathfrak{a}$. Let $\mathcal{I}(\mathcal{O}_k)$ be the group of fractional ideals of $\mathcal{O}_k$ and let us consider its subgroup
$\mathcal{P}(\mathcal{O}_k):=\{x\mathcal{O}_k,\ x\in k^\times\}$. Then, the Artin map gives an isomorphism from ${\bf Pic}(\mathcal{O}_k):=\mathcal{I}(\mathcal{O}_k)/\mathcal{P}(\mathcal{O}_k)$ into Gal$(H/k)$. Let $p$ be a prime number, and let $\mathbf{Pic}_p(\mathcal{O}_k)$ be the $p$-part of ${\bf Pic}(\mathcal{O}_k)$. Then, fix $\mathfrak{a}_1,\ldots,\mathfrak{a}_s$, a finite set of ideals of $\mathcal{O}_k$ such that
\begin{equation}\label{pic}
{\bf Pic}_p(\mathcal{O}_k)=<\bar{\mathfrak{a}}_1>\times\cdots\times<\bar{\mathfrak{a}}_s>,
\end{equation}
where $<\bar{\mathfrak{a}}_i>\neq1$ is the group generated by the class $\bar{\mathfrak{a}}_i$ of $\mathfrak{a}_i$ in ${\bf Pic}(\mathcal{O}_k)$. If $n_i$ is the order of $<\bar{\mathfrak{a}}_i>$, then $(\mathfrak{a}_i)^{n_i}=a_i\mathcal{O}_k$, with $a_i\in\mathcal{O}_k$. If ${\bf Pic}_p(\mathcal{O}_k)=1$ then we set $s=1$ and $\mathfrak{a}_1:=\mathcal{O}_k$ and $a_1=1$.\par  
Let $p\neq\rho$ be a prime number, and let $M$ be a power of $p$. Let $\mu_M$ be the group of $M$-th roots of unity. Then we define
\begin{equation}\label{kaem}
K_M:=
\begin{cases}
K((\mathbb{F}_q^\times)^{1/M}) & \mathrm{if}\ \mu_p\subset k\\
K(\mu_M) & \mathrm{if}\ \mu_p\not\subset k.
\end{cases}
\end{equation}
Moreover, we denote by $\mathcal{L}$ the set of prime ideals $\ell$ of $\mathcal{O}_k$ such that $\ell$ splits completely in the Galois extension $K_M\bigl(a_1^{1/M},\ldots,a_s^{1/M}\bigr)/k$.
Exactly as in \cite[Lemma 3]{Ru94} we have
\begin{lemma}\label{extension} For each prime $\ell\in\mathcal{L}$ there exists a cyclic extension $K(\ell)$ of $K$ of degree $M$, contained in the compositum $K.H_{\ell}$, unramified outside $\ell$, and such that $K(\ell)/K$ is totally ramified at all primes above $\ell$.
\end{lemma}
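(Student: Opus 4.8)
The plan is to build $K(\ell)$ as the compositum of $K$ with a suitable totally ramified cyclic subextension of $H_\ell$, and the only genuine work lies in producing that subextension; this is where the hypothesis $\ell\in\mathcal{L}$ gets used in full. Write $K':=K\cap H_\ell$. First I would record the preliminary facts. Since $\ell$ splits completely in $K_M\supseteq K$, it is unramified in $K/k$; hence $K'$, being contained both in $K$ (unramified at $\ell$) and in $H_\ell$ (which over $k$ is unramified outside $\ell$ and split at $\infty$), is unramified over $k$ everywhere and split at $\infty$, so $K'\subseteq H$, and $[K':k]$ divides $[K:k]$ and is therefore prime to $p$. I would also invoke the standard description of the ray class field: $\mathrm{Gal}(H_\ell/H)\cong(\mathcal{O}_k/\ell)^\times/\mathbb{F}_q^\times$ is cyclic of order $d:=(N(\ell)-1)/(q-1)$, it coincides with the inertia group at $\ell$ in $\mathrm{Gal}(H_\ell/k)$ (so $H_\ell/H$ is totally ramified at $\ell$ and unramified elsewhere), and $\mathrm{Gal}(H_\ell/k)/\mathrm{Gal}(H_\ell/H)\cong\mathbf{Pic}(\mathcal{O}_k)$.

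Next I would check that $M\mid d$. In both cases of (\ref{kaem}) the field $K_M$ is a constant field extension of $K$: it is $K\cdot\mathbb{F}_q(\mu_M)$ when $\mu_p\not\subset k$, and $K\cdot\mathbb{F}_q(\mu_{M(q-1)})$ when $\mu_p\subset k$, since any $M$-th root of a generator of $\mathbb{F}_q^\times$ is a primitive $M(q-1)$-th root of unity. Hence $\ell\in\mathcal{L}$ forces $M\mid N(\ell)-1$ in the first case, and then $M\mid d$ because $\mu_p\not\subset k$ gives $\gcd(M,q-1)=1$; in the second case it forces $M(q-1)\mid N(\ell)-1$, which again gives $M\mid d$. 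In particular $v_p(d)\geq v_p(M)$, and since the case $M=1$ is trivial we may assume $M>1$, so $d>1$ and $H_\ell\supsetneq H$.

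The heart of the argument is the following splitting assertion. Let $\mathfrak{G}_p$ be the $p$-Sylow subgroup of $\mathrm{Gal}(H_\ell/k)$; because $[K':k]$ is prime to $p$, $\mathfrak{G}_p$ is also the $p$-Sylow subgroup of $\mathrm{Gal}(H_\ell/K')$. Let $N_p\subseteq\mathfrak{G}_p$ be the $p$-part of $\mathrm{Gal}(H_\ell/H)$; it is cyclic of order $p^{v_p(d)}\geq M$ and is exactly the $p$-part of the inertia at $\ell$, so that $1\to N_p\to\mathfrak{G}_p\to\mathbf{Pic}_p(\mathcal{O}_k)\to1$ is exact. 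I claim this sequence splits, and this is precisely what the conditions on the $a_i$ in the definition of $\mathcal{L}$ provide. For each $i$, let $\sigma_i:=(\mathfrak{a}_i,H_\ell/k)$ (we may take $\mathfrak{a}_i$ prime to $\ell$, and $\ell$ prime to $a_i$); then $\sigma_i$ lifts $\bar{\mathfrak{a}}_i$, and $\sigma_i^{n_i}=(a_i\mathcal{O}_k,H_\ell/k)$ lies in $\mathrm{Gal}(H_\ell/H)$, where it corresponds to the class of $a_i$ modulo $\ell$. Since $\ell$ splits completely in $k(a_i^{1/M})$ and $\mu_M$ lies in the residue field at $\ell$ (because $\ell$ splits in $K_M$), $a_i$ is an $M$-th power modulo $\ell$, hence, as $n_i\mid M$, an $n_i$-th power modulo $\ell$; thus $\sigma_i^{n_i}$ is an $n_i$-th power in the cyclic group $\mathrm{Gal}(H_\ell/H)$, and after multiplying $\sigma_i$ by a suitable element of that group we may assume $\sigma_i^{n_i}=1$. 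Then $\sigma_i$ has order exactly $n_i$, lies in $\mathfrak{G}_p$, and $C:=\langle\sigma_1,\dots,\sigma_s\rangle$ surjects onto $\mathbf{Pic}_p(\mathcal{O}_k)=\langle\bar{\mathfrak{a}}_1\rangle\times\cdots\times\langle\bar{\mathfrak{a}}_s\rangle$ while having order at most $\prod_i n_i=|\mathbf{Pic}_p(\mathcal{O}_k)|$; hence $C$ is a complement to $N_p$ in $\mathfrak{G}_p$.

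Finally I would take $F_0\subseteq H_\ell$ to be the fixed field of $C$ inside the maximal $p$-extension of $K'$ contained in $H_\ell$. Then $\mathrm{Gal}(F_0/K')\cong\mathfrak{G}_p/C\cong N_p$ is cyclic, $F_0/K'$ is unramified outside $\ell$, and it is totally ramified at $\ell$ since the image of $N_p$ in $\mathfrak{G}_p/C$ is everything; let $F$ be the subfield of $F_0$ with $[F:K']=M$, and set $K(\ell):=K\cdot F$. The intermediate field $F\cap K$ has degree over $K'$ dividing both $[F:K']=M$ and $[K:k]$, hence equal to $1$; so $[K(\ell):K]=[F:K']=M$ and $K(\ell)/K$ is cyclic. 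Clearly $K(\ell)\subseteq K\cdot H_\ell$; since $K\cdot H_\ell/K$ is unramified outside $\ell$ (the primes ramifying in $K/k$ do not divide the conductor $\ell$ of $H_\ell/k$), so is $K(\ell)/K$; and at a prime $\mathfrak{L}$ of $K(\ell)$ above $\ell$, using that $\ell$ is unramified in $K/k$ one computes that the ramification index of $\mathfrak{L}$ over $\ell$ equals $[F:K']=M=[K(\ell):K]$, so $K(\ell)/K$ is totally ramified at every prime above $\ell$. The main obstacle is exactly the splitting of $1\to N_p\to\mathfrak{G}_p\to\mathbf{Pic}_p(\mathcal{O}_k)\to1$: when $\mathbf{Pic}_p(\mathcal{O}_k)\neq1$ the naive candidate, namely $K$ times the degree-$M$ subextension of $H_\ell/H$, has degree $M\,[H:H\cap K]$ over $K$, which is too large, and one is forced to peel off the purely ramified part $F_0$, whose existence is guaranteed precisely by the conditions on the $a_i$ encoded in $\mathcal{L}$.
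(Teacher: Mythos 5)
Your overall strategy coincides with the paper's: peel off the Artin symbols $(\mathfrak{a}_i,\cdot)$ of the auxiliary ideals $\mathfrak{a}_i$, exploiting the fact that the $a_i$ are $M$-th powers modulo $\ell$ (which is exactly why the definition of $\mathcal{L}$ involves $a_1^{1/M},\dots,a_s^{1/M}$). However, there is a genuine gap in the step where you split $1\to N_p\to\mathfrak{G}_p\to\mathbf{Pic}_p(\mathcal{O}_k)\to1$. You write ``hence, as $n_i\mid M$, an $n_i$-th power modulo $\ell$'', but $n_i\mid M$ is not given and need not hold: $n_i$ is the order of $\bar{\mathfrak{a}}_i$ in $\mathbf{Pic}_p(\mathcal{O}_k)$ and depends only on $k$ and $p$, whereas $M$ is an arbitrary power of $p$, later specialized to $p\,\#(\mathcal{O}_K^\times/\mathcal{E}_K)_\chi\,\#\mathrm{Cl}(\mathcal{O}_K)_\chi$ in the proof of Theorem \ref{casfacile}, and there is no a priori comparison between the two. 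If $n_i>M$ then in the cyclic group $C=\mathrm{Gal}(H_\ell/H)$ of order $d$ one has $C^M\not\subseteq C^{n_i}$, so knowing $[a_i]\in C^M$ does not put it in $C^{n_i}$, and you cannot adjust $\sigma_i$ by an element of $C$ to force $\sigma_i^{n_i}=1$. So the splitting of the full $p$-Sylow sequence is not established.

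The fix is precisely what the paper does: cut the inertia down to order $M$ \emph{before} trying to split. Replace $H_\ell$ by $H(\ell)$, the fixed field of $C^M$ (so $\mathrm{Gal}(H(\ell)/H)\cong C/C^M$ has order $M$), equivalently work in the quotient $\mathfrak{G}_p/N_p^M$. There one has $N_p\cap C^M=N_p^M$ (as $M$ is a $p$-power dividing $d$ and $N_p$ is the $p$-part of the cyclic group $C$), so the image of $\sigma_i^{n_i}=[a_i]$ is automatically trivial; no hypothesis $n_i\mid M$ is needed. The subgroup $D:=\langle\bar\sigma_1,\dots,\bar\sigma_s\rangle$ then has order dividing $\prod_i n_i=\#\mathbf{Pic}_p(\mathcal{O}_k)$ and surjects onto $\mathbf{Pic}_p(\mathcal{O}_k)$, hence is a complement to the (cyclic, order $M$) inertia, and the rest of your argument — peeling off the prime-to-$p$ part, forming $K(\ell)$ as a compositum with $K$, and checking degree, cyclicity, ramification at $\ell$ and unramifiedness elsewhere — goes through as you wrote it. In short: your bookkeeping is fine, but the order of operations (split first, then cut to degree $M$) must be reversed.
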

\begin{proof} Let us remark that the group $C:=\mathrm{Gal}(H_\ell/H)$ is cyclic of order $(N(\ell)-1)/w_k$. Since  $\ell$ splits completely in $K_M$ the integer $M$ divides $(N(\ell)-1)/w_k$. In particular the fixed field of $C^M$ is a cyclic extension of $H$ of degree $M$. Let us denote it by $H(\ell)$. Let $\sigma_{\mathfrak{a}_i}:=(\mathfrak{a}_i, H(\ell)/k)$, for $i=1,\ldots,s$ (remark that $\mathfrak{a}_i$ is prime to $\ell$). Let $D:=<\sigma_{\mathfrak{a}_1},\ldots,\sigma_{\mathfrak{a}_s}>$ be the subgroup of $\mathrm{Gal}(H(\ell)/k)$ generated by the automorphisms $\sigma_{\mathfrak{a}_i}$. Let $E$ be the fixed field of $D$. Let $P$ be the $p$-part of $\mathrm{Gal}(H/k)$ and $L$ be the fixed field of $P$. From (\ref{pic}) we deduce that $E\cap H=L$. Moreover, if $\sigma\in D\cap\mathrm{Gal}(H(\ell)/H)$ then $\sigma$ is the restriction to $H(\ell)$ of  some automorphism $(x\mathcal{O}_k, H_\ell/k)$, where $x=\prod_{i=1}^s a_i^{e_i}$. But such elements are $M$-th powers modulo $\ell$ for $\ell\in\mathcal{L}$, which implies that $(x\mathcal{O}_k, H_\ell/k)\in C^M$ and hence $\sigma=1$. Therefore $H(\ell)=E.H$. It is obvious now that $E=E'L$, where $E'$ is a subfield of $E$ such that $E'\cap L=k$. The field $K(\ell):=E'.K$ satisfies the required properties stated in the lemma.
\end{proof}
Let $\mathcal{S}$ be the set of squarefree ideals of $\mathcal{O}_k$ divisible only by primes $\ell\in\mathcal{L}$. If $\mathfrak{a}=\ell_1\cdots\ell_n\in\mathcal{S}$ then we set $K(\mathfrak{a}):=K(\ell_1)\cdots K(\ell_n)$ and $K(\mathcal{O}_k):=K$. If $\mathfrak{g}$ is an ideal of $\mathcal{O}_k$ then we denote by $\mathcal{S}(\mathfrak{g})$ the set of ideals $\mathfrak{a}\in\mathcal{S}$ that are prime to $\mathfrak{g}$. Following Rubin we define an Euler system to be a function
\begin{equation*}
\alpha:\mathcal{S}(\mathfrak{g})\longrightarrow k_\infty^\times,
\end{equation*}
such that
\begin{description}
\item{E1.}\ $\alpha(\mathfrak{a})\in K(\mathfrak{a})^\times$.
\item{E2.}\ $\alpha(\mathfrak{a})\in\mathcal{O}_{K(\mathfrak{a})}^\times$, if $\mathfrak{a}\neq\mathcal{O}_k$.
\item{E3.}\  $N_{K(\mathfrak{a}\ell)/K(\mathfrak{a})}\bigl(\alpha(\mathfrak{a}\ell)\bigr)=\alpha(\mathfrak{a})^{1-\mathrm{Fr}(\ell)^{-1}}$, where Fr$(\ell)$ is the Frobenius of $\ell$ in Gal$(K(\mathfrak{a})/k)$.
\item{E4.}\ $\alpha(\mathfrak{a}\ell)\equiv\alpha(\mathfrak{a})^{\mathrm{Frob}(\ell)^{-1}(N(\ell)-1)/M}$ modulo all primes above $\ell$.
\end{description}\par
We use the theory of sign-normalized Drinfel'd modules, developped by D.\,Hayes in \cite{Hay85}, to produce Euler systems. Let $\Omega_k$ be the completion of the algebraic closure of $k_\infty$. Then $\Omega_k$ is algebraically closed. We briefly recall the definition of the Drinfel'd module $\Phi^\Gamma$, associated to any $\mathcal{O}_k$-lattice $\Gamma$ of $\Omega_k$, that is, any finitely generated $\mathcal{O}_k$-submodule of $\Omega_k$, of rank one. Let $\Omega_k[\mathbf{F}]$ be the left twisted polynomial ring in the Frobenius endomorphism $\mathbf{F}:x\longmapsto x^q$, with the rule $\mathbf{F}.w=w^q.\mathbf{F}$, for all $w\in\Omega_k$. Then, $\Phi^\Gamma: \mathcal{O}_k\longrightarrow\Omega_k[\mathbf{F}]$, is the $\mathbb{F}_q$-algebra homomorphism such that the image of $x$ is the unique element $\Phi_x^\Gamma$ of $\Omega_k[\mathbf{F}]$ satisfying
\begin{equation}\label{exponent}
e_\Gamma(xz)=\Phi_x^\Gamma(e_\Gamma(z)),\ \mathrm{for\ all}\ z\in\Omega_k.
\end{equation}
Here, by $e_\Gamma(z)$ we mean the infinite product
\begin{equation*}
e_\Gamma(z):=z\prod_{\gamma\in\Gamma}(1-z/\gamma)\quad(\gamma\neq0).
\end{equation*} 
\begin{theorem} The infinite product $e_\Gamma(z)$ converges uniformly on any bounded subset of $\Omega_k$. We thus obtain a surjective $\mathbb{F}_q$-linear entire function of $\Omega_k$, periodic with $\Gamma$ as a group of periods.
\end{theorem}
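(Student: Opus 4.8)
The plan is to read all three assertions off the infinite product, the only nontrivial input being that a rank-one $\mathcal{O}_k$-lattice is \emph{discrete} in $\Omega_k$, i.e.\ every ball $\{z\in\Omega_k:\ |z|\le B\}$ contains only finitely many points of $\Gamma$. This follows from the fact that a fixed fractional ideal of $\mathcal{O}_k$ has only finitely many elements of bounded degree (Riemann--Roch), together with $\Gamma=\mathfrak{b}\gamma_0$ for some fractional ideal $\mathfrak{b}$ of $\mathcal{O}_k$ and some $\gamma_0\in\Omega_k^\times$. Granting this, fix $B>0$: for $|z|\le B$ and $\gamma\in\Gamma$ with $|\gamma|>B$ one has $|z/\gamma|<1$, so $|1-z/\gamma|=1$, and since only finitely many $\gamma$ fail this inequality, the error terms $-z/\gamma$ tend to $0$ uniformly on $\{|z|\le B\}$. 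By the standard criterion for convergence of infinite products over a complete non-archimedean field, $e_\Gamma$ converges uniformly on $\{|z|\le B\}$; as $B$ was arbitrary and the partial products are polynomials, $e_\Gamma$ is entire on $\Omega_k$. Splitting the product at radius $B$ writes $e_\Gamma$, on $\{|z|\le B\}$, as $z\prod_{0<|\gamma|\le B}(1-z/\gamma)$ times a unit, which shows the zeros of $e_\Gamma$ are exactly the points of $\Gamma$, each simple; in particular $e_\Gamma$ is not constant.

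For $\mathbb{F}_q$-linearity I would first note that homogeneity $e_\Gamma(cz)=c\,e_\Gamma(z)$ for $c\in\mathbb{F}_q$ is immediate on reindexing $\gamma\mapsto c^{-1}\gamma$ in the product (using the $\mathbb{F}_q$-stability of $\Gamma$ and the unconditional convergence of non-archimedean products). For additivity I would approximate $e_\Gamma$ by polynomials: since $\Gamma$ is an infinite-dimensional $\mathbb{F}_q$-vector space, choose finite $\mathbb{F}_q$-subspaces $\Gamma_1\subset\Gamma_2\subset\cdots$ with $\bigcup_i\Gamma_i=\Gamma$. For any finite $\mathbb{F}_q$-subspace $V\subset\Omega_k$ the polynomial $e_V(z):=z\prod_{v\in V\setminus\{0\}}(1-z/v)$ is, up to a nonzero constant, the separable monic polynomial $\prod_{v\in V}(z-v)$ whose root set is the subspace $V$, and such a polynomial is $\mathbb{F}_q$-linear by the classical argument: for $w\in V$, reindexing gives $\prod_v(z+w-v)=\prod_v(z-v)$, so $\prod_v(z+z'-v)-\prod_v(z-v)-\prod_v(z'-v)$ has $z$-degree $<|V|$ yet vanishes on $V$, hence is $0$. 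The same uniform estimate as above gives $e_\Gamma=\lim_i e_{\Gamma_i}$ uniformly on bounded sets, so $e_\Gamma$ is a locally uniform limit of $\mathbb{F}_q$-linear polynomials and is therefore itself $\mathbb{F}_q$-linear. Periodicity then follows formally: for $\omega\in\Gamma$ we get $e_\Gamma(z+\omega)=e_\Gamma(z)+e_\Gamma(\omega)=e_\Gamma(z)$ since $e_\Gamma(\omega)=0$; conversely if $e_\Gamma(z+\omega)=e_\Gamma(z)$ for all $z$ then $e_\Gamma(\omega)=0$, so $\omega\in\Gamma$ by the zero description, and $\Gamma$ is exactly the group of periods.

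Finally, surjectivity: for $a\in\Omega_k$ the function $e_\Gamma(z)-a$ is entire and, like $e_\Gamma$, non-constant, hence has a zero because $\Omega_k$ is complete and algebraically closed (Newton polygon / Weierstrass factorization). Thus $e_\Gamma$ is onto. The step I expect to require genuine care is the non-archimedean analysis underlying everything else: uniform convergence of the product on bounded sets, the exact identification of its zero set, the passage to the limit $\lim_i e_{\Gamma_i}$, and the surjectivity of non-constant entire functions on $\Omega_k$. Once these analytic facts are in hand, the $\mathbb{F}_q$-linearity and the periodicity are purely formal consequences.
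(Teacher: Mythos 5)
The paper itself does not prove this statement: its ``proof'' is a one-line reference to Theorem~8.5 of Hayes, \emph{A brief introduction to Drinfel'd modules}. Your argument is a correct and essentially complete reconstruction of the standard proof found there. The ingredients are the right ones and are assembled in the right order: discreteness of $\Gamma$ from Riemann--Roch applied to a fractional ideal $\mathfrak{b}$ with $\Gamma=\mathfrak{b}\gamma_0$; the non-archimedean product criterion giving uniform convergence on balls, entirety, and the identification of the (simple) zero set with $\Gamma$; $\mathbb{F}_q$-linearity via the separable additive polynomials $\prod_{v\in V}(t-v)$ attached to finite $\mathbb{F}_q$-subspaces $V$, followed by a uniform-on-balls limit; and surjectivity from the Newton polygon of a nonconstant entire power series over a complete algebraically closed non-archimedean field. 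The one point worth making explicit is the choice of exhaustion $\Gamma_1\subset\Gamma_2\subset\cdots$: the cleanest choice is $\Gamma_i:=\{\gamma\in\Gamma:|\gamma|\le B_i\}$ with $B_i\to\infty$, which is automatically a finite $\mathbb{F}_q$-subspace because the ultrametric inequality makes $\{|\gamma|\le B\}$ a subgroup and $|c|=1$ for all $c\in\mathbb{F}_q^\times$. For an arbitrary increasing exhaustion with $\bigcup_i\Gamma_i=\Gamma$ the convergence $e_{\Gamma_i}\to e_\Gamma$ still holds, but to justify it you should observe that for each $B$ the finitely many $\gamma$ with $|\gamma|\le B$ eventually all lie in $\Gamma_i$, so on $\{|z|\le B\}$ the tail $e_\Gamma/e_{\Gamma_i}$ is a product of factors uniformly close to $1$ while $e_{\Gamma_i}$ is uniformly bounded there.
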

\begin{proof} See \cite[Theorem 8.5]{Hay92}.
\end{proof}
\begin{theorem} If $\Gamma\subset\Gamma'$ then the factor group $\Gamma'/\Gamma$ is finite. Moreover,
\begin{equation}\label{rela1}
e_{\Gamma'}(z)=P(\Gamma,\Gamma';e_\Gamma(z)),\ \mathrm{for\ all}\ z\in\Omega_k,
\end{equation} 
where $P(\Gamma,\Gamma';t)$ is the polynomial
\begin{equation*}
P(\Gamma,\Gamma';t):=t\prod_{\gamma\neq0}(1-t/\gamma),\quad\quad(\gamma\in e_\Gamma(\Gamma')).
\end{equation*}
\end{theorem}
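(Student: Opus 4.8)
The plan is to establish the finiteness of $\Gamma'/\Gamma$ first, and then to obtain the identity by a comparison of zeros of entire functions on $\Omega_k$.

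\textbf{Step 1: $\Gamma'/\Gamma$ is finite.} Both $\Gamma$ and $\Gamma'$ are finitely generated, torsion-free (as submodules of the field $\Omega_k$) $\mathcal{O}_k$-modules of rank one. Tensoring the inclusion $\Gamma\hookrightarrow\Gamma'$ with the fraction field $k$ yields an injection of one-dimensional $k$-vector spaces, hence an isomorphism; so $(\Gamma'/\Gamma)\otimes_{\mathcal{O}_k}k=0$, i.e.\ $\Gamma'/\Gamma$ is a finitely generated torsion $\mathcal{O}_k$-module. As $\mathcal{O}_k$ is a Dedekind ring whose residue fields at maximal ideals are all finite (it is the ring of functions regular outside $\infty$), every finitely generated torsion $\mathcal{O}_k$-module is finite; hence so is $\Gamma'/\Gamma$.

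\textbf{Step 2: properties of $e_\Gamma$ to be used.} From the product expansion $e_\Gamma(z)=z\prod_{\gamma\ne0}(1-z/\gamma)$ one reads off that the zero set of $e_\Gamma$ is exactly $\Gamma$ and that $e_\Gamma$ vanishes to order one at $0$. Because $e_\Gamma$ is $\mathbb{F}_q$-linear, $e_\Gamma(z)-e_\Gamma(z_0)=e_\Gamma(z-z_0)$, so for every $z_0\in\Omega_k$ the function $e_\Gamma(\cdot)-e_\Gamma(z_0)$ has only simple zeros, forming the coset $z_0+\Gamma$. In particular $e_\Gamma$ maps $\Gamma'$ onto a \emph{finite} $\mathbb{F}_q$-subspace $W:=e_\Gamma(\Gamma')$ of $\Omega_k$ (finite by Step 1, since $\Gamma=\ker e_\Gamma\subset\Gamma'$), and $e_\Gamma(z)\in W$ if and only if $z\in\Gamma'$.

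\textbf{Step 3: the identity.} Set $f(z):=P(\Gamma,\Gamma';e_\Gamma(z))$. Since $W$ is a finite $\mathbb{F}_q$-subspace, $P(\Gamma,\Gamma';t)=t\prod_{0\ne w\in W}(1-t/w)$ equals a nonzero constant times $\prod_{w\in W}(t-w)$, hence is an $\mathbb{F}_q$-linearized polynomial of the shape $t+\sum_{i\ge1}b_it^{q^i}$; therefore $f$ is entire and $\mathbb{F}_q$-linear, with $f(z)=z+\cdots$ near $0$. Its zeros are precisely the $z$ with $e_\Gamma(z)\in W$, i.e.\ exactly $\Gamma'$, and each such zero $z_0$ is simple, because among the factors $e_\Gamma(z)-w$ ($w\in W$) of $f$ only the one with $w=e_\Gamma(z_0)$ vanishes at $z_0$, and it does so to order one by Step 2. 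Thus $f$ and $e_{\Gamma'}$ are entire functions with the same divisor (simple zeros along $\Gamma'$, no others), so $g:=f/e_{\Gamma'}$ extends to a zero-free entire function on $\Omega_k$. A zero-free entire function on the complete, algebraically closed, non-archimedean field $\Omega_k$ is constant (its Newton polygon carries no finite slope, which forces every non-constant coefficient to vanish), and comparing the linear terms at $0$ gives $g\equiv1$. Hence $e_{\Gamma'}(z)=P(\Gamma,\Gamma';e_\Gamma(z))$ for all $z\in\Omega_k$.

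\textbf{Expected main difficulty.} The substantive point is Step 3's conclusion that $f$ and $e_{\Gamma'}$ are literally equal, not merely cofactors: it requires both the simplicity of all zeros in play — a characteristic-$p$ phenomenon, rooted in $e_\Gamma'\equiv1$ — and the non-archimedean Liouville statement for $\Omega_k$. One could instead note that $f$ is $\mathbb{F}_q$-linear and vanishes on $\Gamma'$, hence is $\Gamma'$-periodic, and appeal to uniqueness of a $\Gamma'$-periodic $\mathbb{F}_q$-linear entire function with linear term $z$; but that uniqueness itself reduces to the same zero count, so I would present the argument as above.
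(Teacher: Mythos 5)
Your proof is correct, and it differs from the paper's in two respects. For the finiteness of $\Gamma'/\Gamma$ the paper argues concretely: every rank-one finitely generated $\mathcal{O}_k$-submodule of $\Omega_k$ is of the form $\alpha\mathfrak{a}$ with $\alpha\in k^\times$ and $\mathfrak{a}$ a fractional ideal, so $\Gamma'/\Gamma\cong\mathcal{O}_k/\mathfrak{c}$ for a nonzero integral ideal $\mathfrak{c}$, which is visibly finite. Your tensoring-with-$k$ argument, reducing to the fact that a finitely generated torsion module over a Dedekind domain with finite residue fields is finite, is a valid (slightly more abstract) alternative; both are one-liners. For the identity \eqref{rela1} the paper simply cites Hayes \cite[Theorem 8.7]{Hay92}, whereas you reconstruct the proof from scratch. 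Your argument — establish that $f(z):=P(\Gamma,\Gamma';e_\Gamma(z))$ is a finite product $c\prod_{w\in e_\Gamma(\Gamma')}(e_\Gamma(z)-w)$, hence entire with divisor exactly $\Gamma'$ with simple zeros (simplicity coming from $\mathbb{F}_q$-linearity of $e_\Gamma$ and $e_\Gamma'\equiv1$), that $P$ is an additive polynomial with linear coefficient $1$ because $e_\Gamma(\Gamma')$ is a finite $\mathbb{F}_q$-subspace, and then that $f/e_{\Gamma'}$ is a zero-free entire function on the complete algebraically closed non-archimedean field $\Omega_k$, hence constant, and equal to $1$ by matching linear terms — is the standard Weierstrass-factorization proof, essentially the one Hayes gives. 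So you have filled in a detail the paper outsources; there is no gap, and the mathematics is sound. One presentational remark: you might say explicitly that $\prod_{w\in W}(t-w)$ being $\mathbb{F}_q$-linear for a finite $\mathbb{F}_q$-subspace $W$ is the classical result on linearized polynomials, as the claim is otherwise easy to read past.
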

\begin{proof} Since $\Gamma$ and $\Gamma'$ are finitely generated $\mathcal{O}_k$-submodules of $\Omega_k$, of rank one there exist $\alpha,\alpha'\in k^\times$ and fractional ideals $\mathfrak{a}$ and $\mathfrak{a}'$ of $k$ such that $\Gamma=\alpha\mathfrak{a}$ and $\Gamma'=\alpha'\mathfrak{a}'$. The hypothesis $\Gamma\subset\Gamma'$ means that $\mathfrak{c}:=\alpha\alpha'^{-1}\mathfrak{a}\mathfrak{a}'^{-1}$ is a nonzero ideal of $\mathcal{O}_k$. But $\Gamma\subset\Gamma'$ is isomorphic to $\mathcal{O}_k/\mathfrak{c}$ which is necessarily finite. The other assertion of the theorem corresponds to \cite[Theorem 8.7]{Hay92}.
\end{proof}
It is easy to check the following equality
\begin{equation}\label{lineaire}
P(\omega\Gamma,\omega\Gamma';\omega t)=\omega P(\Gamma,\Gamma';t),
\end{equation}
for all $\omega\in\Omega_k^\times$.
\begin{corollary} Let $\Gamma\subset\Gamma'\subset\Gamma''$ be three $\mathcal{O}_k$-lattices of $\Omega_k$. Then
\begin{equation}\label{composition}
P(\Gamma,\Gamma'';t)=P(\Gamma',\Gamma'';P(\Gamma,\Gamma';t)).
\end{equation}
\end{corollary}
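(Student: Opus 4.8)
The plan is to deduce the identity purely formally from the functional equation (\ref{rela1}), applied to the three nested pairs of lattices, and then to upgrade the resulting identity of values into an identity of polynomials by invoking the surjectivity of $e_\Gamma$.

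First I would note that since $\Gamma\subset\Gamma'$, $\Gamma'\subset\Gamma''$ and $\Gamma\subset\Gamma''$ are all inclusions of $\mathcal{O}_k$-lattices of $\Omega_k$, the three polynomials $P(\Gamma,\Gamma';t)$, $P(\Gamma',\Gamma'';t)$ and $P(\Gamma,\Gamma'';t)$ are well defined, and (\ref{rela1}) gives, for every $z\in\Omega_k$,
\begin{equation*}
e_{\Gamma'}(z)=P(\Gamma,\Gamma';e_\Gamma(z)),\qquad e_{\Gamma''}(z)=P(\Gamma',\Gamma'';e_{\Gamma'}(z)),\qquad e_{\Gamma''}(z)=P(\Gamma,\Gamma'';e_\Gamma(z)).
\end{equation*}
Substituting the first relation into the second and comparing with the third yields
\begin{equation*}
P\bigl(\Gamma,\Gamma'';e_\Gamma(z)\bigr)=P\bigl(\Gamma',\Gamma'';P(\Gamma,\Gamma';e_\Gamma(z))\bigr)\qquad\text{for all }z\in\Omega_k.
\end{equation*}

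Next I would invoke the surjectivity statement in the Theorem on $e_\Gamma$: the map $e_\Gamma\colon\Omega_k\to\Omega_k$ is onto, so as $z$ ranges over $\Omega_k$ the element $t:=e_\Gamma(z)$ ranges over all of $\Omega_k$. Hence the two polynomials $P(\Gamma,\Gamma'';t)$ and $P(\Gamma',\Gamma'';P(\Gamma,\Gamma';t))$ in $\Omega_k[t]$ take the same value at every point of $\Omega_k$. Since $\Omega_k$ is algebraically closed, it is an infinite field, so two polynomials over $\Omega_k$ agreeing on infinitely many points are equal as polynomials; this is exactly (\ref{composition}).

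The argument is essentially formal, and I do not expect a genuine obstacle. The only point deserving attention is the passage from an equality of values on the image of $e_\Gamma$ to an equality in $\Omega_k[t]$: this is precisely where the surjectivity of $e_\Gamma$ (equivalently, that $\Omega_k$ is infinite so that a nonzero polynomial cannot vanish everywhere) is needed, and it is the reason one records the surjectivity in the earlier Theorem. Everything else is a straightforward chaining of the defining property (\ref{rela1}) of the transition polynomials $P(\cdot,\cdot;t)$.
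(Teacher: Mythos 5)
Your proof is correct and makes precise exactly what the paper means by ``an immediate consequence of (\ref{rela1})'': chain the three instances of (\ref{rela1}) to get an equality of polynomial values on the image of $e_\Gamma$, then use the surjectivity of $e_\Gamma$ (stated in the earlier Theorem) together with the fact that $\Omega_k$ is an infinite field to conclude the equality in $\Omega_k[t]$. This is the same approach, just spelled out.
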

\begin{proof} The identity (\ref{composition}) is an immediate consequence of (\ref{rela1}).
\end{proof}
Let us set
\begin{equation*}
\delta(\Gamma,\Gamma'):=\prod_{\gamma\neq0}\gamma^{-1},\quad\quad(\gamma\in e_\Gamma(\Gamma')).
\end{equation*}
\begin{lemma}\label{fix} We have 
\begin{equation*}
\Phi_x^\Gamma(t)=xP(\Gamma,x^{-1}\Gamma;t),
\end{equation*}
for all $x\in\mathcal{O}_k-\{0\}$. In particular, the leading coefficient of $\Phi_x^\Gamma$ is $x\delta(\Gamma,x^{-1}\Gamma)$.
\end{lemma}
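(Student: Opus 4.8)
The plan is to deduce the identity straight from the defining relation (\ref{exponent}) for $\Phi_x^\Gamma$ together with the transformation law (\ref{rela1}), applied to the pair of lattices $\Gamma\subseteq x^{-1}\Gamma$. First I would check that $x^{-1}\Gamma$ is indeed an $\mathcal{O}_k$-lattice of $\Omega_k$ of rank one containing $\Gamma$: since $x\in\mathcal{O}_k$ and $\Gamma$ is an $\mathcal{O}_k$-module, $x\Gamma\subseteq\Gamma$, hence $\Gamma\subseteq x^{-1}\Gamma$, and multiplication by $x^{-1}$ is an $\mathcal{O}_k$-isomorphism, so rank one is preserved. Thus (\ref{rela1}) and (\ref{composition}) are available for $\Gamma\subseteq x^{-1}\Gamma$.

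Next I would record the elementary scaling identity $e_{\omega\Gamma}(\omega z)=\omega\,e_\Gamma(z)$ for $\omega\in\Omega_k^\times$, which is immediate from the infinite product defining $e_\Gamma$ after the substitution $\gamma\mapsto\omega\gamma$. Taking $\omega=x^{-1}$ and replacing $z$ by $xz$ gives $e_{x^{-1}\Gamma}(z)=x^{-1}e_\Gamma(xz)$. On the other hand, (\ref{rela1}) applied to $\Gamma\subseteq x^{-1}\Gamma$ reads $e_{x^{-1}\Gamma}(z)=P(\Gamma,x^{-1}\Gamma;e_\Gamma(z))$. Comparing the two expressions,
\begin{equation*}
\Phi_x^\Gamma(e_\Gamma(z))=e_\Gamma(xz)=x\,P(\Gamma,x^{-1}\Gamma;e_\Gamma(z)),\qquad z\in\Omega_k,
\end{equation*}
where the first equality is (\ref{exponent}). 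Since $e_\Gamma$ is surjective onto the (infinite, algebraically closed) field $\Omega_k$, the two polynomials $\Phi_x^\Gamma(t)$ and $xP(\Gamma,x^{-1}\Gamma;t)$ agree at infinitely many values of $t$, hence coincide in $\Omega_k[t]$. This gives the first assertion.

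For the leading coefficient I would compute $\deg P(\Gamma,x^{-1}\Gamma;t)$ and the top coefficient of $\prod_{\gamma\neq0}(1-t/\gamma)$, $\gamma$ running over $e_\Gamma(x^{-1}\Gamma)$: this set is a finite $\mathbb{F}_q$-subspace of $\Omega_k$ isomorphic to $x^{-1}\Gamma/\Gamma\cong\mathcal{O}_k/x\mathcal{O}_k$, so it has cardinality $q^d:=N(x\mathcal{O}_k)$, whence $\deg P=q^d$ and the leading coefficient of $P$ is $\prod_{\gamma\neq0}(-\gamma^{-1})=(-1)^{q^d-1}\delta(\Gamma,x^{-1}\Gamma)$. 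The sign is $1$ in every characteristic ($q^d-1$ is even when $q$ is odd, and $-1=1$ when $q$ is even), so the leading coefficient of $\Phi_x^\Gamma=xP(\Gamma,x^{-1}\Gamma;t)$ is $x\,\delta(\Gamma,x^{-1}\Gamma)$. There is no real obstacle in this argument; the only points needing a little care are the inclusion $\Gamma\subseteq x^{-1}\Gamma$, the passage from the pointwise identity to an identity of polynomials (which is where surjectivity of $e_\Gamma$ enters), and the harmless sign check in the last step.
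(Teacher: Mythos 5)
Your proof is correct and follows essentially the same route as the paper: apply \eqref{rela1} to $\Gamma'=x^{-1}\Gamma$, use the scaling identity $e_\Gamma(xz)=x\,e_{x^{-1}\Gamma}(z)$, and compare with \eqref{exponent}. You merely spell out the details the paper leaves implicit (that $x^{-1}\Gamma\supseteq\Gamma$ is a lattice, that surjectivity of $e_\Gamma$ upgrades the pointwise identity to a polynomial identity, and the sign bookkeeping for the leading coefficient).
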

\begin{proof} We have to apply (\ref{rela1}) to the lattice $\Gamma':=x^{-1}\Gamma$. But since $e_\Gamma(xz)=xe_{\Gamma'}(z)$ we obtain $e_\Gamma(xz)=xP(\Gamma,x^{-1}\Gamma;e_\Gamma(z))$. Comparing with  (\ref{exponent}) gives us the formula for $\Phi_x^\Gamma(t)$.  
\end{proof}
\begin{proposition}\label{fia} Let $\mathfrak{a}$ be a nonzero ideal of $\mathcal{O}_k$. Then, the left ideal of $\Omega_k[\mathbf{F}]$ generated by $\Phi_a^\Gamma$, $a\in\mathfrak{a}$, is principal generated by the monic polynomial 
\begin{equation*}
\Phi_{\mathfrak{a}}^\Gamma(t):=\delta(\Gamma,\mathfrak{a}^{-1}\Gamma)^{-1}P(\Gamma,\mathfrak{a}^{-1}\Gamma;t).
\end{equation*}
\end{proposition}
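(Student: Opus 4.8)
The plan is to realize the left ideal $I$ generated by $\{\Phi_a^\Gamma : a\in\mathfrak{a}\}$ as a principal left ideal $\Omega_k[\mathbf{F}]\,g$ with $g$ monic, to see via the composition law (\ref{composition}) that $\Phi_{\mathfrak{a}}^\Gamma$ right-divides $g$, and then to pin $g$ down by computing its kernel as an additive polynomial over the algebraically closed field $\Omega_k$. Two preliminary remarks. First, $\Omega_k[\mathbf{F}]$ is left-Euclidean: since $\Omega_k$ is a field one can divide on the left, $f=qg+r$ with $\deg_{\mathbf{F}}r<\deg_{\mathbf{F}}g$, so every left ideal is principal with a unique monic generator. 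Second, for lattices $\Gamma\subset\Gamma'$ the polynomial $P(\Gamma,\Gamma';t)$ is $\mathbb{F}_q$-linear on $\Omega_k$ — indeed $e_{\Gamma'}=P(\Gamma,\Gamma';e_\Gamma(\,\cdot\,))$ with $e_\Gamma$ surjective and $e_\Gamma,e_{\Gamma'}$ both $\mathbb{F}_q$-linear — so $P(\Gamma,\Gamma';t)\in\Omega_k[\mathbf{F}]$, and its roots are the $[\Gamma':\Gamma]$ distinct elements of $e_\Gamma(\Gamma')\cong\Gamma'/\Gamma$; hence it is separable, with leading coefficient equal to $\delta(\Gamma,\Gamma')$ up to a sign which is readily seen to be $+1$ (as $[\Gamma':\Gamma]$ is a power of $\rho$). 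Taking $\Gamma'=\mathfrak{a}^{-1}\Gamma$ shows $\Phi_{\mathfrak{a}}^\Gamma$ is monic and separable, of $\mathbf{F}$-degree $\log_q N(\mathfrak{a})$.

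For the inclusion $I\subseteq\Omega_k[\mathbf{F}]\,\Phi_{\mathfrak{a}}^\Gamma$, fix a nonzero $a\in\mathfrak{a}$. From $\mathfrak{a}\subseteq\mathcal{O}_k$ and $a\mathcal{O}_k\subseteq\mathfrak{a}$ we get $\Gamma\subseteq\mathfrak{a}^{-1}\Gamma\subseteq a^{-1}\Gamma$, so Lemma \ref{fix} and (\ref{composition}) give
\[
\Phi_a^\Gamma(t)=a\,P(\Gamma,a^{-1}\Gamma;t)=a\,P\bigl(\mathfrak{a}^{-1}\Gamma,\,a^{-1}\Gamma;\;\delta(\Gamma,\mathfrak{a}^{-1}\Gamma)\,\Phi_{\mathfrak{a}}^\Gamma(t)\bigr);
\]
reading the right-hand side as a composition of $\mathbb{F}_q$-linear polynomials, that is, as a product in $\Omega_k[\mathbf{F}]$, exhibits $\Phi_a^\Gamma$ as a left multiple of $\Phi_{\mathfrak{a}}^\Gamma$. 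Hence every element of $I$ is a left multiple of $\Phi_{\mathfrak{a}}^\Gamma$, so $g=h\,\Phi_{\mathfrak{a}}^\Gamma$ for some $h\in\Omega_k[\mathbf{F}]$. Moreover each $\Phi_a^\Gamma=aP(\Gamma,a^{-1}\Gamma;t)$ has simple roots, hence nonzero $\mathbf{F}^0$-coefficient; writing $\Phi_a^\Gamma=h_a g$ (possible since $\Phi_a^\Gamma\in\Omega_k[\mathbf{F}]g$) and comparing $\mathbf{F}^0$-coefficients shows that of $g$ is nonzero too, i.e.\ $g$ is separable.

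Next I compute $\ker g$. From (\ref{exponent}) one has $e_\Gamma(az)=\Phi_a^\Gamma(e_\Gamma(z))$, so by surjectivity of $e_\Gamma$ (with kernel $\Gamma$), $\ker\Phi_a^\Gamma=e_\Gamma(a^{-1}\Gamma)$. Since $\Gamma\subseteq a^{-1}\Gamma$ and $\Gamma=\ker e_\Gamma$, one may pull the intersection through $e_\Gamma$:
\[
\bigcap_{a\in\mathfrak{a}}\ker\Phi_a^\Gamma=\bigcap_{a\in\mathfrak{a}}e_\Gamma(a^{-1}\Gamma)=e_\Gamma\Bigl(\bigcap_{a\in\mathfrak{a}}a^{-1}\Gamma\Bigr)=e_\Gamma(\mathfrak{a}^{-1}\Gamma),
\]
the last equality because $\sum_{a\in\mathfrak{a}}a\mathcal{O}_k=\mathfrak{a}$ forces $\bigcap_{a\in\mathfrak{a}}a^{-1}\mathcal{O}_k=\mathfrak{a}^{-1}$ (compare valuations at each prime) and $\Gamma$ is a rank-one $\mathcal{O}_k$-module. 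On the other hand, because $g$ generates $I$, each $\Phi_a^\Gamma$ is a left multiple of $g$ (so $\ker g\subseteq\ker\Phi_a^\Gamma$), while $g$ is a finite left combination of the $\Phi_a^\Gamma$ and additive polynomials vanish at $0$ (so $\ker g\supseteq\bigcap_a\ker\Phi_a^\Gamma$); thus $\ker g=e_\Gamma(\mathfrak{a}^{-1}\Gamma)$. By the first paragraph, $\Phi_{\mathfrak{a}}^\Gamma$ too is a monic separable additive polynomial with kernel $e_\Gamma(\mathfrak{a}^{-1}\Gamma)$.

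To conclude: $g$ and $\Phi_{\mathfrak{a}}^\Gamma$ are monic, separable, and have the same kernel, so (a separable additive polynomial having $q^{\deg_{\mathbf{F}}}$ distinct roots) $\deg_{\mathbf{F}}g=\deg_{\mathbf{F}}\Phi_{\mathfrak{a}}^\Gamma$; in $g=h\,\Phi_{\mathfrak{a}}^\Gamma$ this forces $\deg_{\mathbf{F}}h=0$, hence $h\in\Omega_k^\times$, and as both $g$ and $\Phi_{\mathfrak{a}}^\Gamma$ are monic, $h=1$. Therefore $I=\Omega_k[\mathbf{F}]\,g=\Omega_k[\mathbf{F}]\,\Phi_{\mathfrak{a}}^\Gamma$, as claimed. (Equivalently, two monic separable additive polynomials with the same kernel are equal, each being $\prod_v(t-v)$ over $v$ in that kernel.) The one step I expect to require real care is the bookkeeping that translates the displayed composition identity into an honest left multiplication in $\Omega_k[\mathbf{F}]$ — in particular tracking how the scalar $\delta(\Gamma,\mathfrak{a}^{-1}\Gamma)$ commutes past $\mathbf{F}$ — together with the elementary but essential identity $\bigcap_{a\in\mathfrak{a}}a^{-1}\Gamma=\mathfrak{a}^{-1}\Gamma$, which is precisely where the ideal $\mathfrak{a}$, and not just a single generator of it, enters the argument.
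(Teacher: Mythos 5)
Your proof is correct, and it begins exactly where the paper's proof does: the composition identity (\ref{composition}) rewrites $\Phi_a^\Gamma(t)=aP(\mathfrak{a}^{-1}\Gamma,a^{-1}\Gamma;P(\Gamma,\mathfrak{a}^{-1}\Gamma;t))$, which exhibits each $\Phi_a^\Gamma$ as a left multiple of $P(\Gamma,\mathfrak{a}^{-1}\Gamma;t)$ and hence gives the inclusion $I\subseteq\Omega_k[\mathbf{F}]\,\Phi_{\mathfrak{a}}^\Gamma$. The paper's proof stops there and simply asserts that this proves the left ideal is generated by $P(\Gamma,\mathfrak{a}^{-1}\Gamma;t)$; the reverse inclusion (equivalently, that the monic generator $g$ of $I$ has the same $\mathbf{F}$-degree as $\Phi_{\mathfrak{a}}^\Gamma$) is left to the reader, presumably as folklore from Hayes's theory. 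Your kernel computation is precisely the missing half: $\ker g=\bigcap_{a\in\mathfrak{a}}\ker\Phi_a^\Gamma=e_\Gamma\bigl(\bigcap_a a^{-1}\Gamma\bigr)=e_\Gamma(\mathfrak{a}^{-1}\Gamma)=\ker\Phi_{\mathfrak{a}}^\Gamma$, and monicity plus separability then force $g=\Phi_{\mathfrak{a}}^\Gamma$. You also rightly flag that it is the identity $\bigcap_{a\in\mathfrak{a}}a^{-1}\Gamma=\mathfrak{a}^{-1}\Gamma$ where the full ideal $\mathfrak{a}$, rather than a single element of it, is genuinely used. So the two arguments share the same key lemma; yours is the self-contained version, the paper's is a sketch that only records the easier inclusion.
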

\begin{proof} If $a\in\mathfrak{a}$ then $\Phi_a^\Gamma(t)=aP(\mathfrak{a}^{-1}\Gamma,a^{-1}\Gamma;P(\Gamma,\mathfrak{a}^{-1}\Gamma;t))$, thanks to (\ref{composition}). This proves that our left ideal is generated by $P(\Gamma,\mathfrak{a}^{-1}\Gamma;t)$. 
\end{proof}
\begin{corollary} Let $D:\Omega_k[\mathbf{F}]\longrightarrow\Omega_k$ be the map which associates to a polynomial in $\mathbf{F}$ its constant term. Then 
\begin{equation}\label{identity}
D(\Phi_x^\Gamma)=x\quad\mathrm{and}\quad  D(\Phi_{\mathfrak{a}}^\Gamma)=\delta(\Gamma,\mathfrak{a}^{-1}\Gamma)^{-1}.
\end{equation}
Moreover, if $s_{\Phi^\Gamma}(x)$ is the leading coefficient of $\Phi_x^\Gamma$. Then
\begin{equation}\label{pieuvre}
D(\Phi_{x\mathcal{O}_k}^\Gamma)=s_{\Phi^\Gamma}(x)^{-1}x.
\end{equation}
\end{corollary}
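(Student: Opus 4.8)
The plan is to derive all three identities of (\ref{identity})--(\ref{pieuvre}) from a single elementary observation about the polynomials $P(\Gamma,\Gamma';t)$. First I would record two basic facts about $D$. It is additive and left $\Omega_k$-linear: if $Q=\sum_i c_i\mathbf{F}^i\in\Omega_k[\mathbf{F}]$ and $w\in\Omega_k$, then $wQ=\sum_i (wc_i)\mathbf{F}^i$, whence $D(wQ)=wc_0=wD(Q)$. Moreover, since $q$ is a power of $\rho$, the additive polynomial attached to $Q$, namely $t\mapsto\sum_i c_i t^{q^i}$, has formal derivative $c_0$ (all the monomials $t^{q^i}$ with $i\ge 1$ differentiate to $0$), so $D(Q)$ equals the value at $t=0$ of the derivative of the associated polynomial.

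The observation is then: for any $\mathcal{O}_k$-lattices $\Gamma\subset\Gamma'$, the coefficient of $t$ in $P(\Gamma,\Gamma';t)$ is $1$. Indeed, by Lemma \ref{fix} (resp. Proposition \ref{fia}) the polynomial $P(\Gamma,x^{-1}\Gamma;t)=x^{-1}\Phi_x^\Gamma(t)$ (resp. $P(\Gamma,\mathfrak{a}^{-1}\Gamma;t)=\delta(\Gamma,\mathfrak{a}^{-1}\Gamma)\,\Phi_{\mathfrak{a}}^\Gamma(t)$) is, up to an $\Omega_k^\times$-factor, an element of $\Omega_k[\mathbf{F}]$, hence additive; and from the product expansion $P(\Gamma,\Gamma';t)=t\prod_{\gamma\neq0}(1-t/\gamma)$, $\gamma\in e_\Gamma(\Gamma')$, the product rule gives derivative at $0$ equal to $\prod_{\gamma\neq0}(1-0/\gamma)=1$. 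Equivalently, expanding the product, the lowest-degree term is $t$, with coefficient $1$.

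Granting this, the first identity in (\ref{identity}) follows from Lemma \ref{fix} and the left $\Omega_k$-linearity of $D$: $D(\Phi_x^\Gamma)=x\,D\bigl(P(\Gamma,x^{-1}\Gamma;\cdot)\bigr)=x$. The second follows from Proposition \ref{fia} in the same way: $D(\Phi_{\mathfrak{a}}^\Gamma)=\delta(\Gamma,\mathfrak{a}^{-1}\Gamma)^{-1}D\bigl(P(\Gamma,\mathfrak{a}^{-1}\Gamma;\cdot)\bigr)=\delta(\Gamma,\mathfrak{a}^{-1}\Gamma)^{-1}$. For (\ref{pieuvre}) I would specialize the second identity to the principal ideal $\mathfrak{a}=x\mathcal{O}_k$, so that $\mathfrak{a}^{-1}\Gamma=x^{-1}\Gamma$ and $D(\Phi_{x\mathcal{O}_k}^\Gamma)=\delta(\Gamma,x^{-1}\Gamma)^{-1}$; then, since Lemma \ref{fix} identifies the leading coefficient of $\Phi_x^\Gamma$ as $s_{\Phi^\Gamma}(x)=x\,\delta(\Gamma,x^{-1}\Gamma)$, solving for $\delta(\Gamma,x^{-1}\Gamma)$ and substituting yields $D(\Phi_{x\mathcal{O}_k}^\Gamma)=x\,s_{\Phi^\Gamma}(x)^{-1}=s_{\Phi^\Gamma}(x)^{-1}x$, the last equality because $\Omega_k$ is commutative.

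There is no serious obstacle here. The only points that demand a little care are the dictionary between a twisted polynomial $Q\in\Omega_k[\mathbf{F}]$ and its associated additive polynomial $\sum_i c_i t^{q^i}$, so that ``constant term'' means the coefficient of $\mathbf{F}^0$, i.e. of $t$; and the fact that $D$, while additive and left $\Omega_k$-linear, is \emph{not} multiplicative, which is why each of the three identities must be read off directly from the explicit formulas of Lemma \ref{fix} and Proposition \ref{fia} rather than from any product rule for $D$.
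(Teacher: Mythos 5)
Your proposal is correct and follows exactly the route the paper intends: the paper's proof is the single sentence that the corollary is ``immediate from the explicit description of $\Phi_x^\Gamma$ and $\Phi_{\mathfrak{a}}^\Gamma$ given by Lemma \ref{fix} and Proposition \ref{fia},'' and what you have done is spell out the one hidden observation (the coefficient of $t$ in $P(\Gamma,\Gamma';t)$ is $1$, read off the product expansion) together with the left $\Omega_k$-linearity of $D$, from which all three identities follow.
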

\begin{proof} This is immediate from the explicite description of $\Phi_x^\Gamma$ and $\Phi_{\mathfrak{a}}^\Gamma$ given by Lemma \ref{fix} and Proposition \ref{fia}.
\end{proof}
Let $k(\infty)=\mathbb{F}_{q^{d_\infty}}$ be the field of constants of $k_\infty$ and let $\mathbf{sgn}$ be a sign-function of $k_\infty$ (fixed throughout this article), that is a continuous group homomorphism $\mathbf{sgn}:k_\infty^\times\longrightarrow k(\infty)^\times$ such that $\mathbf{sgn}(x)=x$ for all $x\in k(\infty)^\times$. 
\begin{definition}The Drinfel'd module $\Phi^\Gamma$ is called $\mathbf{sgn}$-normalized if the leading coefficient of $\Phi_x^\Gamma$ has the form $\mathbf{sgn}(x)^{\tau_\Gamma}$, where $\tau_\Gamma\in\mathrm{Gal}(k(\infty)/\mathbb{F}_q)$ is an automorphism of $k(\infty)/\mathbb{F}_q$ depending only on $\Gamma$.
\end{definition}
\begin{theorem}Let $\mathfrak{c}$ be a fractional ideal of $\mathcal{O}_k$. Then, there exists a nonzero element $\xi(\mathfrak{c})\in\Omega_k^\times$ such that the Drinfel'd module $\Phi^{\tilde{\mathfrak{c}}}$ associated to $\tilde{\mathfrak{c}}:=\xi(\mathfrak{c})\mathfrak{c}$ is a $\mathbf{sgn}$-normalized $\mathcal{O}_k$-module. Moreover, $\xi(\mathfrak{c})$ is determined up to multiplication by elements of $k(\infty)^\times$.
\end{theorem}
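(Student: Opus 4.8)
The plan is to combine the behaviour of leading coefficients under homothety of lattices with the multiplicativity of $x\mapsto\Phi_x^\Gamma$, thereby reducing the statement to the known $\infty$-adic analytic structure of the uniformized module. I may assume $\mathfrak{c}$ is a fractional ideal, since every rank-one $\mathcal{O}_k$-lattice of $\Omega_k$ has the form $\alpha\mathfrak{a}$ with $\alpha\in k^\times$ and $\mathfrak{a}$ a fractional ideal (as noted above) and the property in question is invariant under homothety. Write $N(x):=\#(\mathcal{O}_k/x\mathcal{O}_k)$ for $x\in\mathcal{O}_k\setminus\{0\}$ and let $s_\Gamma(x)$ denote the leading coefficient of $\Phi_x^\Gamma$, so $s_\Gamma(x)=x\,\delta(\Gamma,x^{-1}\Gamma)$ by Lemma~\ref{fix}. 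First I would record the homothety law: substituting $\Phi_x^\Gamma(t)=xP(\Gamma,x^{-1}\Gamma;t)$ into (\ref{lineaire}) gives $\Phi_x^{\omega\Gamma}(t)=\omega\,\Phi_x^\Gamma(\omega^{-1}t)$ for $\omega\in\Omega_k^\times$, and since $\Phi_x^\Gamma$ has degree $\log_q N(x)$ in $\mathbf{F}$, comparing top coefficients yields $s_{\omega\Gamma}(x)=\omega^{\,1-N(x)}\,s_\Gamma(x)$. Thus being $\mathbf{sgn}$-normalized is a property of the homothety class of $\Gamma$, and the whole task is to choose the scaling factor $\xi(\mathfrak{c})$ well.

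Next I would extract a cocycle relation. Applying the leading-coefficient map to $\Phi_{xy}^\Gamma=\Phi_x^\Gamma\Phi_y^\Gamma$ gives $s_\Gamma(xy)=s_\Gamma(x)\,s_\Gamma(y)^{N(x)}$, hence $s_\Gamma(x)^{N(y)-1}=s_\Gamma(y)^{N(x)-1}$ by symmetry. Since $x\in\mathcal{O}_k$ has poles only at $\infty$ one has $N(x)=(q^{d_\infty})^{-v_\infty(x)}\equiv 1\pmod{w_\infty}$; and as $x$ varies the exponents $-v_\infty(x)$ run through an additively closed subset of $\mathbb{Z}_{\geq0}$ with finite complement (the Weierstrass gaps at $\infty$), so $\gcd_x\bigl(N(x)-1\bigr)=q^{d_\infty}-1=w_\infty$. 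Fixing $x_1,\dots,x_r\in\mathcal{O}_k\setminus\{0\}$ and $c_i\in\mathbb{Z}$ with $\sum_i c_i(N(x_i)-1)=w_\infty$ and combining the symmetry relations yields $s_{\mathfrak{c}}(x)^{w_\infty}=c^{\,N(x)-1}$ for all $x$, where $c:=\prod_i s_{\mathfrak{c}}(x_i)^{c_i}\in\Omega_k^\times$. As $\Omega_k$ is algebraically closed and $\rho\nmid w_\infty$, I would then fix $\xi=\xi(\mathfrak{c})$ with $\xi^{w_\infty}=c$ and set $\tilde{\mathfrak{c}}:=\xi\mathfrak{c}$; by the homothety law $s_{\tilde{\mathfrak{c}}}(x)^{w_\infty}=\xi^{(1-N(x))w_\infty}c^{\,N(x)-1}=1$, so $s_{\tilde{\mathfrak{c}}}(x)$ lies in $\mu_{w_\infty}(\Omega_k)=k(\infty)^\times$ for every $x$. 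Feeding $N(x)\equiv1\pmod{w_\infty}$ back into the cocycle relation turns it into $s_{\tilde{\mathfrak{c}}}(xy)=s_{\tilde{\mathfrak{c}}}(x)s_{\tilde{\mathfrak{c}}}(y)$, so $x\mapsto s_{\tilde{\mathfrak{c}}}(x)$ is a homomorphism $\mathcal{O}_k\setminus\{0\}\to k(\infty)^\times$, which extends to $k^\times$ and restricts to the identity on $\mathbb{F}_q^\times$ (as $\Phi^{\tilde{\mathfrak{c}}}$ is an $\mathbb{F}_q$-algebra map, so $\Phi_a^{\tilde{\mathfrak{c}}}=a$ for $a\in\mathbb{F}_q$).

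The step I expect to be the main obstacle is the last one: identifying this homomorphism with $\mathbf{sgn}(\cdot)^{\tau}$ for a suitable $\tau\in\mathrm{Gal}(k(\infty)/\mathbb{F}_q)$. The mere facts that it is a homomorphism to $k(\infty)^\times$ trivial on constants do not determine it; the missing input is the $\infty$-adic analytic theory of the uniformized module, namely a Newton-polygon analysis of $\Phi_x^{\tilde{\mathfrak{c}}}$ over $k_\infty$ — equivalently, control of the $v_\infty$-valuations of its torsion points $e_{\tilde{\mathfrak{c}}}(x^{-1}\tilde{\mathfrak{c}})$, which are governed by the lattice $\tilde{\mathfrak{c}}$. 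This shows that $s_{\tilde{\mathfrak{c}}}(x)$ is $\infty$-adically continuous in $x$ and depends on $x$ only through its leading term at $\infty$, forcing it to agree with $\mathbf{sgn}(x)$ up to a power of the Frobenius of $k(\infty)/\mathbb{F}_q$; this is the genuine content of Hayes's theorem \cite{Hay85} and is where the real work lies.

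For uniqueness, I would note that scaling $\tilde{\mathfrak{c}}$ by an element of $k(\infty)^\times$ does not change $\Phi^{\tilde{\mathfrak{c}}}$, so the whole coset $k(\infty)^\times\xi(\mathfrak{c})$ consists of admissible factors. Conversely, if $\xi'$ is admissible, then since $\xi'\mathfrak{c}=(\xi'/\xi)\tilde{\mathfrak{c}}$ the homothety law gives $(\xi'/\xi)^{N(x)-1}=\mathbf{sgn}(x)^{\tau}\mathbf{sgn}(x)^{-\tau'}\in k(\infty)^\times$ for all $x$; using $\gcd_x(N(x)-1)=w_\infty$, the equality $\mu_{w_\infty}(\Omega_k)=k(\infty)^\times$, and the fact that the attached automorphism is intrinsic to the isomorphism class (so $\tau=\tau'$), one concludes $\xi'/\xi\in k(\infty)^\times$, which is exactly the asserted ambiguity.
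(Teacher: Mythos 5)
The paper does not give a proof at all; it cites Hayes \cite[Theorem~12.3 and Proposition~13.1]{Hay92}. So your proposal is necessarily a different route, and it is worth assessing on its own terms. The cocycle construction is a genuinely good observation and goes through: from $s_\Gamma(xy)=s_\Gamma(x)s_\Gamma(y)^{N(x)}$ and the symmetry $s_\Gamma(x)^{N(y)-1}=s_\Gamma(y)^{N(x)-1}$, together with $\gcd_x(N(x)-1)=w_\infty$, one does obtain a $\xi$ such that $s_{\xi\mathfrak{c}}(x)\in\mu_{w_\infty}(\Omega_k)=k(\infty)^\times$ for all $x$, and then multiplicativity of $x\mapsto s_{\xi\mathfrak{c}}(x)$ follows as you say. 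You also correctly flag that the identification of this homomorphism with $\mathbf{sgn}(\cdot)^{\tau}$ is the hard $\infty$-adic input.

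However, there is a structural gap beyond the one you acknowledge, and it also infects the uniqueness argument. Your $\xi$ is built entirely from the cocycle relations and makes no reference to $\mathbf{sgn}$; yet the $\xi(\mathfrak{c})$ of the theorem must depend on the chosen sign function. Concretely, the condition ``$s_{\xi\mathfrak{c}}(x)\in k(\infty)^\times$ for all $x$'' determines $\xi$ only up to $\mu_{w_\infty^2}(\Omega_k)$, not up to $k(\infty)^\times=\mu_{w_\infty}$: if $\zeta\in\mu_{w_\infty^2}$, then writing $N(x)-1=w_\infty m_x$ with $m_x\equiv-v_\infty(x)\pmod{w_\infty}$ gives $s_{\zeta\xi\mathfrak{c}}(x)=\bigl(\zeta^{-w_\infty}\bigr)^{-v_\infty(x)}s_{\xi\mathfrak{c}}(x)$, so the leading-coefficient homomorphism changes by the character $x\mapsto a^{-v_\infty(x)}$, $a:=\zeta^{-w_\infty}\in k(\infty)^\times$. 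As $\zeta$ runs over $\mu_{w_\infty^2}/\mu_{w_\infty}$, one obtains all $w_\infty$ sign functions; for example, for $k=\mathbb{F}_q(T)$ the Carlitz period $\bar\pi$ normalizes for the monic sign function, and $\zeta\bar\pi$ with $\zeta^{q-1}=a$ normalizes for the sign function sending $1/T\mapsto a$. So once the analytic input tells you $s_{\xi\mathfrak{c}}$ factors through $k_\infty^\times/(1+\mathfrak{m}_\infty)$, you still have to \emph{select} $\xi$ within its $\mu_{w_\infty^2}$-coset to match the given $\mathbf{sgn}$; as written your construction produces a module that is $\mathbf{sgn}_0$-normalized for some $\mathbf{sgn}_0$ you have not controlled. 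The same issue undercuts the uniqueness paragraph: $(\xi'/\xi)^{N(x)-1}\in k(\infty)^\times$ for all $x$ yields only $(\xi'/\xi)^{w_\infty}\in\mu_{w_\infty}$, i.e.\ $\xi'/\xi\in\mu_{w_\infty^2}$, and to descend to $\mu_{w_\infty}$ you invoke ``$\tau=\tau'$'' as an intrinsic fact of the isomorphism class --- but that is not free; it again requires the analytic content (that the leading-coefficient homomorphism is insensitive to $v_\infty$), which is exactly what has not been established.
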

\begin{proof} We refer the reader to \cite[Theorem 12.3 and Proposition 13.1]{Hay92}.
\end{proof}

\begin{proposition} If $\xi(\mathcal{O}_k)$ is fixed then for every fractional ideal $\mathfrak{c}$ of $\mathcal{O}_k$ there exists a unique choice of $\xi(\mathfrak{c})$ so that
\begin{equation}\label{cond}
\xi(\mathfrak{a}^{-1}\mathfrak{c})=D(\Phi_{\mathfrak{a}}^{\tilde{\mathfrak{c}}})\xi(\mathfrak{c}),
\end{equation}   
for all ideal $\mathfrak{a}$ of $\mathcal{O}_k$. 
\end{proposition}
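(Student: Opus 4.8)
The plan is to translate (\ref{cond}) into a statement about the normalized lattices $\widetilde{\mathfrak{c}}=\xi(\mathfrak{c})\mathfrak{c}$, extract from it a short list of constraints that already pin down every $\xi(\mathfrak{c})$ once $\xi(\mathcal{O}_k)$ is fixed, and then check that these forced values are globally consistent. Using $D(\Phi_{\mathfrak{a}}^{\Gamma})=\delta(\Gamma,\mathfrak{a}^{-1}\Gamma)^{-1}$ from (\ref{identity}) and $\mathfrak{a}^{-1}\widetilde{\mathfrak{c}}=\xi(\mathfrak{c})\,\mathfrak{a}^{-1}\mathfrak{c}$, relation (\ref{cond}) is equivalent to $\widetilde{\mathfrak{a}^{-1}\mathfrak{c}}=\delta(\widetilde{\mathfrak{c}},\mathfrak{a}^{-1}\widetilde{\mathfrak{c}})^{-1}\,\mathfrak{a}^{-1}\widetilde{\mathfrak{c}}$ for every nonzero integral ideal $\mathfrak{a}$; that is, writing $\mathfrak{a}^{-1}\ast\Lambda:=\delta(\Lambda,\mathfrak{a}^{-1}\Lambda)^{-1}\mathfrak{a}^{-1}\Lambda$, the requirement is that the operation $\mathfrak{a}^{-1}\ast$ carry $\widetilde{\mathfrak{c}}$ to $\widetilde{\mathfrak{a}^{-1}\mathfrak{c}}$. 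From the elementary identity $e_{\omega\Gamma}(\omega z)=\omega\, e_{\Gamma}(z)$ one gets the scaling rule $\delta(\omega\Gamma,\omega\Gamma')=\omega^{\,1-[\Gamma':\Gamma]}\delta(\Gamma,\Gamma')$, and from (\ref{composition}) one gets the multiplicativity $\delta(\Gamma,\Gamma'')=\delta(\Gamma',\Gamma'')\,\delta(\Gamma,\Gamma')^{[\Gamma'':\Gamma']}$. Combining the scaling rule with Lemma \ref{fix}, Proposition \ref{fia} and the intertwining relation $\Phi_x^{\mathfrak{a}^{-1}\Gamma}\circ(\delta(\Gamma,\mathfrak{a}^{-1}\Gamma)\Phi_{\mathfrak{a}}^{\Gamma})=(\delta(\Gamma,\mathfrak{a}^{-1}\Gamma)\Phi_{\mathfrak{a}}^{\Gamma})\circ\Phi_x^{\Gamma}$ (a consequence of (\ref{rela1}) and (\ref{exponent})), a comparison of leading coefficients shows that if $\Phi^{\Lambda}$ is $\mathbf{sgn}$-normalized then so is $\Phi^{\mathfrak{a}^{-1}\ast\Lambda}$, with parameter $\tau_{\Lambda}\,\varphi^{\deg\mathfrak{a}}$, where $\varphi\in\mathrm{Gal}(k(\infty)/\mathbb{F}_q)$ is the $q$-power automorphism. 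The multiplicativity of $\delta$ then yields $\mathfrak{b}^{-1}\ast(\mathfrak{a}^{-1}\ast\Lambda)=(\mathfrak{a}\mathfrak{b})^{-1}\ast\Lambda$ and, passing to constant terms, the cocycle identity $D(\Phi_{\mathfrak{a}\mathfrak{b}}^{\Lambda})=D(\Phi_{\mathfrak{b}}^{\mathfrak{a}^{-1}\ast\Lambda})\,D(\Phi_{\mathfrak{a}}^{\Lambda})$, on which everything rests.

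\textbf{Uniqueness.} Put $\mathfrak{c}=\mathfrak{a}=\mathfrak{b}$ an integral ideal in (\ref{cond}). Then $\widetilde{\mathfrak{b}}=\xi(\mathfrak{b})\mathfrak{b}$ and $\mathfrak{b}^{-1}\widetilde{\mathfrak{b}}=\xi(\mathfrak{b})\mathcal{O}_k$, so the scaling rule gives $D(\Phi_{\mathfrak{b}}^{\widetilde{\mathfrak{b}}})=\delta(\widetilde{\mathfrak{b}},\mathfrak{b}^{-1}\widetilde{\mathfrak{b}})^{-1}=\xi(\mathfrak{b})^{N(\mathfrak{b})-1}\delta(\mathfrak{b},\mathcal{O}_k)^{-1}$, whence (\ref{cond}) forces $\xi(\mathcal{O}_k)=\xi(\mathfrak{b})^{N(\mathfrak{b})}\delta(\mathfrak{b},\mathcal{O}_k)^{-1}$, i.e. $\xi(\mathfrak{b})^{N(\mathfrak{b})}=\delta(\mathfrak{b},\mathcal{O}_k)\,\xi(\mathcal{O}_k)$. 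Since $N(\mathfrak{b})$ is a power of $\rho$ and $\Omega_k$ is algebraically closed, hence perfect, the map $y\mapsto y^{N(\mathfrak{b})}$ is injective on $\Omega_k^{\times}$, so $\xi(\mathfrak{b})$ is determined by $\xi(\mathcal{O}_k)$ for every integral $\mathfrak{b}$. Any fractional ideal can be written $\mathfrak{c}=\mathfrak{a}^{-1}\mathfrak{b}$ with $\mathfrak{a},\mathfrak{b}$ integral, and then (\ref{cond}) forces $\xi(\mathfrak{c})=D(\Phi_{\mathfrak{a}}^{\widetilde{\mathfrak{b}}})\,\xi(\mathfrak{b})$; hence $\xi$ is unique.

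\textbf{Existence.} I would turn the previous computation around: for an integral ideal $\mathfrak{b}$ let $\xi(\mathfrak{b})$ be the unique $N(\mathfrak{b})$-th root of $\delta(\mathfrak{b},\mathcal{O}_k)\,\xi(\mathcal{O}_k)$ in $\Omega_k^{\times}$, and for a general fractional ideal, writing $\mathfrak{c}=\mathfrak{a}^{-1}\mathfrak{b}$ with $\mathfrak{a},\mathfrak{b}$ integral, set $\xi(\mathfrak{c}):=D(\Phi_{\mathfrak{a}}^{\widetilde{\mathfrak{b}}})\,\xi(\mathfrak{b})$. Three things then have to be checked, all reducing to the cocycle identity above together with the bijectivity of $y\mapsto y^{N(\mathfrak{b})}$ on $\Omega_k^{\times}$ and on $\mathbb{F}_q^{\times}$ (the latter since $\gcd(N(\mathfrak{b}),q-1)=1$): (i) the definition does not depend on the chosen representation $\mathfrak{c}=\mathfrak{a}^{-1}\mathfrak{b}$; (ii) each $\widetilde{\mathfrak{c}}$ really is a $\mathbf{sgn}$-normalized lattice — for integral $\mathfrak{b}$ this amounts to the existence and uniqueness of a normalized lattice $\Lambda'$ in the ideal class of $\mathfrak{b}$ with $\mathfrak{b}^{-1}\ast\Lambda'=\widetilde{\mathcal{O}_k}$, which one extracts from the existence theorem for $\xi$, the scaling rule, and the uniqueness of $N(\mathfrak{b})$-th roots, while for general $\mathfrak{c}$ it follows from the fact that $\ast$ preserves $\mathbf{sgn}$-normalization; (iii) relation (\ref{cond}) holds for every pair $(\mathfrak{c},\mathfrak{a})$, which, after writing $\mathfrak{c}=\mathfrak{a}_0^{-1}\mathfrak{b}_0$, is a direct application of the cocycle identity to $\mathfrak{a}\mathfrak{a}_0$ versus $\mathfrak{a}$ followed by $\mathfrak{a}_0$.

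The only genuine work should lie in the consistency verifications of the existence part, and more precisely in extracting from the composition formula (\ref{composition}) the identities $\delta(\Gamma,\Gamma'')=\delta(\Gamma',\Gamma'')\delta(\Gamma,\Gamma')^{[\Gamma'':\Gamma']}$ and $D(\Phi_{\mathfrak{a}\mathfrak{b}}^{\Lambda})=D(\Phi_{\mathfrak{b}}^{\mathfrak{a}^{-1}\ast\Lambda})D(\Phi_{\mathfrak{a}}^{\Lambda})$ with the correct bookkeeping of leading coefficients, twisted degrees ($\deg_{\mathbf{F}}\Phi_{\mathfrak{a}}^{\Gamma}=\deg\mathfrak{a}$) and lattice indices; once these are in hand, promoting the operations $\mathfrak{a}^{-1}\ast$ to a well-defined action of the whole group of fractional ideals is automatic. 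The characteristic-$\rho$ input, namely the uniqueness of $N(\mathfrak{b})$-th roots in $\Omega_k$, is exactly what removes all residual ambiguity and makes both the existence and the uniqueness statements hold with the single datum $\xi(\mathcal{O}_k)$.
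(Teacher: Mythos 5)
Your proof is correct, and it supplies exactly the content the paper's own proof elides. The paper simply cites Hayes' Theorems 8.14 and 13.8 to conclude that, given $\xi(\mathfrak{c})$, the value $\xi(\mathfrak{a}^{-1}\mathfrak{c})$ is pinned down by (\ref{cond}) only up to $k(\infty)^\times$, and then ``leaves to the reader'' the promotion to an exact value once $\xi(\mathcal{O}_k)$ is fixed. Your proof fills that gap: the self-referential specialization $\mathfrak{a}=\mathfrak{c}=\mathfrak{b}$ integral, which via the scaling rule for $\delta$ produces the equation $\xi(\mathfrak{b})^{N(\mathfrak{b})}=\delta(\mathfrak{b},\mathcal{O}_k)\,\xi(\mathcal{O}_k)$, together with the bijectivity of $y\mapsto y^{N(\mathfrak{b})}$ on $\Omega_k^{\times}$ (as $N(\mathfrak{b})$ is a power of $\rho$ and $\Omega_k$ is perfect) and on the finite group $k(\infty)^{\times}$, is precisely what eliminates the residual $k(\infty)^{\times}$ ambiguity, and the cocycle identity $D(\Phi_{\mathfrak{a}\mathfrak{b}}^{\Lambda})=D(\Phi_{\mathfrak{b}}^{\mathfrak{a}^{-1}\ast\Lambda})D(\Phi_{\mathfrak{a}}^{\Lambda})$ handles all the consistency checks in the existence part. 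The one presentational difference is that you rederive the needed $\delta$-identities (scaling, multiplicativity, cocycle) directly from (\ref{lineaire}), (\ref{composition}) and (\ref{identity}) rather than quoting Hayes' Theorems 8.14/13.8 for the ``up to $k(\infty)^\times$'' statement; this makes the argument more self-contained but does not constitute a different route. One small point of care: in step (ii) of the existence argument you should make explicit that $\xi(\mathfrak{b})$ as you define it differs from Hayes' normalizing constant by an element of $k(\infty)^{\times}$ (which follows from the same $N(\mathfrak{b})$-th-root argument applied inside $k(\infty)^{\times}$), since that is what guarantees $\widetilde{\mathfrak{b}}$ is actually $\mathbf{sgn}$-normalized; you gesture at this but it deserves to be spelled out.
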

\begin{proof} By \cite[Theorem 8.14 and Theorem 13.8]{Hay92} if $\xi(\mathfrak{c})$ is given then $\xi(\mathfrak{a}^{-1}\mathfrak{c})$ is determined by (\ref{cond}) up to an element of $k(\infty)^\times$. We leave it to the reader to check that if $\xi(\mathcal{O}_k)$ is fixed then (\ref{cond}) allows us to fix a value of $\xi(\mathfrak{c})$ for every $\mathfrak{c}$.
\end{proof}
Let us fix $\xi(\mathcal{O}_k)$ and let us denote by $H^*_{\mathfrak{e}}$ the normalizing field with respect to $\mathbf{sgn}$, cf. \cite[Definition 4.9]{Hay85}. We recall that if $\Phi$ is a $\mathbf{sgn}$-normalized Drinfel'd module, then $H^*_{\mathfrak{e}}$ is the subfield of $\Omega_k$ generated by the coefficients of the polynomials $\Phi_x$, $x\in\mathcal{O}_k$. 
\begin{theorem}The extension $H^*_{\mathfrak{e}}/k$ is finite abelian and unramified except at $\infty$. The ramification index at $\infty$ is $w_\infty/w_k$. Also we have $H\subset H^*_{\mathfrak{e}}$ and $[H^*_{\mathfrak{e}}:H]=w_\infty/w_k$.
\end{theorem}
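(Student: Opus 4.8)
The plan is to follow Hayes' construction of the normalizing field (\cite[\S4]{Hay85}, \cite[\S14]{Hay92}): all three assertions will be deduced from the action of $\mathrm{Gal}(\bar k/k)$ on the finite set $\mathcal{D}$ of $\mathbf{sgn}$-normalized rank-one Drinfel'd $\mathcal{O}_k$-modules in $\Omega_k$, together with Hayes' reciprocity law for that action. Throughout, fix $\Phi:=\Phi^{\tilde{\mathcal{O}_k}}$, so that $H^*_{\mathfrak{e}}$ is generated over $k$ by the coefficients of the $\Phi_x$, $x\in\mathcal{O}_k$.

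First I would check that $\mathcal{D}$, and hence $H^*_{\mathfrak{e}}/k$, is finite. Up to homothety the rank-one $\mathcal{O}_k$-lattices in $\Omega_k$ are classified by $\mathbf{Pic}(\mathcal{O}_k)$, which is finite; and by the theorem recalled above on the existence of $\xi(\mathfrak{c})$, the $\mathbf{sgn}$-normalized modules homothetic to a given lattice form a single coset of $k(\infty)^\times$ on which the subgroup $\mathbb{F}_q^\times$ acts trivially — indeed by (\ref{lineaire}) and Lemma \ref{fix} we have $\Phi_x^{\xi\Gamma}(t)=\xi\,\Phi_x^{\Gamma}(\xi^{-1}t)$, so the linear coefficient of $\Phi_x^{\xi\Gamma}$ is $\xi^{1-q}$ times that of $\Phi_x^{\Gamma}$, which is $1$ exactly when $\xi\in\mathbb{F}_q^\times$. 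Thus each homothety class carries $\#\bigl(k(\infty)^\times/\mathbb{F}_q^\times\bigr)=(q^{d_\infty}-1)/(q-1)=w_\infty/w_k$ such modules and $\#\mathcal{D}=h(\mathcal{O}_k)\cdot(w_\infty/w_k)<\infty$ (that the members of $\mathcal{D}$ have coefficients in $\bar k$, integral outside $\infty$, is part of Hayes' construction). Moreover each $\sigma\in\mathrm{Gal}(\bar k/k)$ carries $\Phi$ to a rank-one Drinfel'd module $\Phi^\sigma$ that is still $\mathbf{sgn}$-normalized, since $\sigma$ moves the leading coefficients $\mathbf{sgn}(x)^{\tau_{\tilde{\mathcal{O}_k}}}\in k(\infty)^\times$ by an automorphism of $k(\infty)/\mathbb{F}_q$, keeping them of the required shape $\mathbf{sgn}(x)^{\tau'}$. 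Hence $\mathrm{Gal}(\bar k/k)$ acts on the finite set $\mathcal{D}$, $H^*_{\mathfrak{e}}$ is the fixed field of the stabilizer of $\Phi$, and $[H^*_{\mathfrak{e}}:k]<\infty$.

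Next I would bring in Hayes' reciprocity law: for a prime $\mathfrak{p}\neq\infty$ of $k$ the Artin automorphism $\sigma_{\mathfrak{p}}:=(\mathfrak{p},H^*_{\mathfrak{e}}/k)$ is well-defined and $\sigma_{\mathfrak{p}}(\Phi^{\tilde{\mathfrak{c}}})=\Phi^{\widetilde{\mathfrak{p}^{-1}\mathfrak{c}}}$ for every fractional ideal $\mathfrak{c}$ prime to $\mathfrak{p}$, the normalization $\widetilde{\mathfrak{p}^{-1}\mathfrak{c}}$ being the coherent one of (\ref{cond}). Since the right-hand side depends only on $\mathfrak{p}$ and the ideal class of $\mathfrak{c}$, these automorphisms commute, $\mathrm{Gal}(H^*_{\mathfrak{e}}/k)$ is a quotient of a ray class group of conductor supported at $\infty$, and $H^*_{\mathfrak{e}}/k$ is abelian and unramified outside $\infty$. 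If $\sigma\in\mathrm{Gal}(\bar k/k)$ fixes $H^*_{\mathfrak{e}}$ it fixes $\Phi$, and writing $\sigma|_{H^*_{\mathfrak{e}}}=(\mathfrak{a},H^*_{\mathfrak{e}}/k)$, the equality $\Phi^{\widetilde{\mathfrak{a}^{-1}\mathcal{O}_k}}=\Phi^{\tilde{\mathcal{O}_k}}$ forces the two lattices to coincide, hence $\mathfrak{a}$ to be principal, hence $\sigma$ to fix $H$; thus $H\subset H^*_{\mathfrak{e}}$. For the index, an element of $\mathrm{Gal}(H^*_{\mathfrak{e}}/H)$ fixes the isomorphism class of $\Phi$ and so moves $\Phi$ within the torsor of size $w_\infty/w_k$; taking $\mathfrak{a}=a\mathcal{O}_k$ with $a\in k^\times$, (\ref{cond}) and (\ref{pieuvre}) give $\xi(a^{-1}\mathcal{O}_k)=D(\Phi_{a\mathcal{O}_k}^{\tilde{\mathcal{O}_k}})\,\xi(\mathcal{O}_k)=s_{\Phi}(a)^{-1}a\,\xi(\mathcal{O}_k)$ with $s_{\Phi}(a)=\mathbf{sgn}(a)^{\tau_{\tilde{\mathcal{O}_k}}}$, so $\widetilde{a^{-1}\mathcal{O}_k}=s_{\Phi}(a)^{-1}\tilde{\mathcal{O}_k}$ and $(\mathfrak{a},H^*_{\mathfrak{e}}/k)$ twists $\Phi$ by $s_{\Phi}(a)^{-1}$; as $a$ ranges over $k^\times$ these twists realize all of $k(\infty)^\times/\mathbb{F}_q^\times$ (because $\mathbf{sgn}$ is locally constant and $k^\times$ dense in $k_\infty^\times$), which therefore acts simply transitively and gives $[H^*_{\mathfrak{e}}:H]=w_\infty/w_k$. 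Finally $\infty$ splits completely in $H$ (since $H\subset k_\infty$), so the inertia group at $\infty$ in $\mathrm{Gal}(H^*_{\mathfrak{e}}/k)$ is contained in $\mathrm{Gal}(H^*_{\mathfrak{e}}/H)$; Hayes' computation of the local Artin map at $\infty$, whose image in $\mathrm{Gal}(H^*_{\mathfrak{e}}/H)$ is $\mathbf{sgn}(\mathcal{O}_\infty^\times)/\mathbb{F}_q^\times=k(\infty)^\times/\mathbb{F}_q^\times$, shows this inertia group is all of $\mathrm{Gal}(H^*_{\mathfrak{e}}/H)$, so the ramification index at $\infty$ is $w_\infty/w_k$ and $H^*_{\mathfrak{e}}/H$ is totally ramified there.

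The one genuinely nontrivial input, which I would import from \cite{Hay85,Hay92} rather than reprove, is Hayes' reciprocity law in the two forms used above: $\sigma_{\mathfrak{p}}(\Phi^{\tilde{\mathfrak{c}}})=\Phi^{\widetilde{\mathfrak{p}^{-1}\mathfrak{c}}}$ for primes $\mathfrak{p}\neq\infty$ and the companion description of the local Artin map at $\infty$ through $\mathbf{sgn}$. Everything else is bookkeeping built on results already in the excerpt — principally (\ref{lineaire}), Lemma \ref{fix}, (\ref{cond}) and (\ref{pieuvre}).
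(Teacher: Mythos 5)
The paper's own proof is a bare citation to Hayes; your proposal instead reconstructs the argument at a high level, which is pedagogically useful and structurally correct, but it contains one gap that is not covered by the inputs you explicitly import from Hayes.

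The gap is in the count of $\mathbf{sgn}$-normalized modules in a single isomorphism class. You argue that $\Phi^{\xi\Gamma}(t)=\xi\Phi_x^\Gamma(\xi^{-1}t)$ multiplies the $\mathbf{F}$-coefficient (the coefficient of $t^q$) of $\Phi_x^\Gamma$ by $\xi^{1-q}$, and conclude $\Phi^{\xi\Gamma}=\Phi^\Gamma$ forces $\xi\in\mathbb{F}_q^\times$. But this only has content if the $\mathbf{F}$-coefficient of some $\Phi_x^\Gamma$ is nonzero, and nothing in the excerpt justifies that. One cannot fall back on the leading coefficient either: for $x\in\mathcal{O}_k$ the degree of $\Phi_x^\Gamma$ in $\mathbf{F}$ is $\deg(x)=-v_\infty(x)\,d_\infty$, always a multiple of $d_\infty$, so the leading coefficient only gets scaled by $\xi^{1-q^{d_\infty m}}=1$ for every $\xi\in k(\infty)^\times$ and gives no constraint. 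The clean argument, and I believe the one actually used in Hayes, is that the lattice-to-Drinfel'd-module map is injective (analytic uniformization), so $\Phi^{\xi\Gamma}=\Phi^\Gamma$ forces $\xi\Gamma=\Gamma$; since $\Gamma=\alpha\mathfrak{a}$ with $\mathfrak{a}\subset k$ a fractional ideal, $\xi\mathfrak{a}=\mathfrak{a}$ gives $\xi\in k$, hence $\xi\in k\cap k(\infty)^\times=\mathbb{F}_q^\times$. You should either replace the coefficient argument by this one, or add the injectivity of the lattice correspondence to your list of cited inputs. The rest of the sketch — the reciprocity law making $\mathrm{Gal}(H^*_{\mathfrak{e}}/k)$ abelian and unramified outside $\infty$, the identification $H\subset H^*_{\mathfrak{e}}$ via principal ideals, the simply transitive action of $\mathrm{Gal}(H^*_{\mathfrak{e}}/H)$ by $s_\Phi(a)^{-1}$-twists, and the local Artin map at $\infty$ filling out the inertia — is consistent with Hayes and with the citations the paper points to.
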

\begin{proof} See \cite[Theorem 4.10]{Hay85} or \cite[\S14]{Hay92}.
\end{proof}
Let $\mathfrak{m}$ be a nonzero proper ideal of $\mathcal{O}_k$, and let us consider the element
\begin{equation*}
\lambda_{\mathfrak{m}}:=\xi(\mathfrak{m})e_{\mathfrak{m}}(1).
\end{equation*}
Then, we may deduce from above the following
\begin{lemma} We have
\begin{equation}\label{rela3}
\xi(\mathfrak{a}^{-1}\mathfrak{m})e_{\mathfrak{a}^{-1}\mathfrak{m}}(1)=\Phi_{\mathfrak{a}}^{\tilde{\mathfrak{m}}}(\lambda_{\mathfrak{m}}),
\end{equation}
for any nonzero ideal $\mathfrak{a}$ of $\mathcal{O}_k$.
\end{lemma}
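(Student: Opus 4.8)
The plan is to unwind both sides of (\ref{rela3}) using only the scaling property of the exponential function $e_\Gamma$, the monic factorization of $\Phi_{\mathfrak{a}}^\Gamma$ from Proposition \ref{fia}, and the normalization (\ref{cond}) of the periods $\xi$.

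First I would record the homogeneity property $e_{\omega\Gamma}(\omega z)=\omega\,e_\Gamma(z)$ for $\omega\in\Omega_k^\times$, which is immediate from the product defining $e_\Gamma$ and is the $e_\Gamma$-analogue of (\ref{lineaire}). Applying it with $\Gamma=\mathfrak{m}$, $\omega=\xi(\mathfrak{m})$, $z=1$ and using $\tilde{\mathfrak{m}}=\xi(\mathfrak{m})\mathfrak{m}$ gives $\lambda_{\mathfrak{m}}=\xi(\mathfrak{m})e_{\mathfrak{m}}(1)=e_{\tilde{\mathfrak{m}}}(\xi(\mathfrak{m}))$. Then I would compute the right-hand side: by Proposition \ref{fia} we have $\Phi_{\mathfrak{a}}^{\tilde{\mathfrak{m}}}(t)=\delta(\tilde{\mathfrak{m}},\mathfrak{a}^{-1}\tilde{\mathfrak{m}})^{-1}P(\tilde{\mathfrak{m}},\mathfrak{a}^{-1}\tilde{\mathfrak{m}};t)$, and evaluating at $t=\lambda_{\mathfrak{m}}=e_{\tilde{\mathfrak{m}}}(\xi(\mathfrak{m}))$ together with (\ref{rela1}) applied to the inclusion $\tilde{\mathfrak{m}}\subset\mathfrak{a}^{-1}\tilde{\mathfrak{m}}$ yields $P(\tilde{\mathfrak{m}},\mathfrak{a}^{-1}\tilde{\mathfrak{m}};\lambda_{\mathfrak{m}})=e_{\mathfrak{a}^{-1}\tilde{\mathfrak{m}}}(\xi(\mathfrak{m}))$. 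Since $\mathfrak{a}^{-1}\tilde{\mathfrak{m}}=\xi(\mathfrak{m})(\mathfrak{a}^{-1}\mathfrak{m})$, the homogeneity property again gives $e_{\mathfrak{a}^{-1}\tilde{\mathfrak{m}}}(\xi(\mathfrak{m}))=\xi(\mathfrak{m})e_{\mathfrak{a}^{-1}\mathfrak{m}}(1)$, so altogether $\Phi_{\mathfrak{a}}^{\tilde{\mathfrak{m}}}(\lambda_{\mathfrak{m}})=\delta(\tilde{\mathfrak{m}},\mathfrak{a}^{-1}\tilde{\mathfrak{m}})^{-1}\,\xi(\mathfrak{m})\,e_{\mathfrak{a}^{-1}\mathfrak{m}}(1)$.

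To finish, I would invoke the second identity in (\ref{identity}), namely $D(\Phi_{\mathfrak{a}}^{\tilde{\mathfrak{m}}})=\delta(\tilde{\mathfrak{m}},\mathfrak{a}^{-1}\tilde{\mathfrak{m}})^{-1}$, and then (\ref{cond}) with $\mathfrak{c}=\mathfrak{m}$, which gives $\xi(\mathfrak{a}^{-1}\mathfrak{m})=D(\Phi_{\mathfrak{a}}^{\tilde{\mathfrak{m}}})\xi(\mathfrak{m})=\delta(\tilde{\mathfrak{m}},\mathfrak{a}^{-1}\tilde{\mathfrak{m}})^{-1}\xi(\mathfrak{m})$. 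Substituting this into the expression just obtained for $\Phi_{\mathfrak{a}}^{\tilde{\mathfrak{m}}}(\lambda_{\mathfrak{m}})$ gives exactly $\xi(\mathfrak{a}^{-1}\mathfrak{m})e_{\mathfrak{a}^{-1}\mathfrak{m}}(1)$, which is (\ref{rela3}).

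The argument is essentially a bookkeeping exercise combining these four ingredients, so I do not expect a real obstacle. The one place that requires care is the consistency of the family $(\xi(\mathfrak{c}))_{\mathfrak{c}}$: it is precisely the normalization (\ref{cond}) that forces the factor $\delta(\tilde{\mathfrak{m}},\mathfrak{a}^{-1}\tilde{\mathfrak{m}})^{-1}$ coming out of Proposition \ref{fia} to coincide with the ratio $\xi(\mathfrak{a}^{-1}\mathfrak{m})/\xi(\mathfrak{m})$; one should also keep in mind that $\mathfrak{a}$ is taken to be an integral nonzero ideal of $\mathcal{O}_k$, so that $\Phi_{\mathfrak{a}}^{\tilde{\mathfrak{m}}}$ is defined.
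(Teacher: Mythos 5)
Your proposal is correct and uses essentially the same ingredients as the paper's proof: the period normalization (\ref{cond}), the lattice relation (\ref{rela1}), the formula for $\Phi_{\mathfrak{a}}^\Gamma$ from Proposition \ref{fia}, and the constant-term identity (\ref{identity}). The only cosmetic difference is that you compute from the right-hand side to the left and invoke the homogeneity $e_{\omega\Gamma}(\omega z)=\omega\,e_\Gamma(z)$ of the exponential, whereas the paper goes from left to right and uses the equivalent $P$-homogeneity (\ref{lineaire}).
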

\begin{proof} By (\ref{cond}), (\ref{rela1}) and (\ref{lineaire}) we obtain $\xi(\mathfrak{a}^{-1}\mathfrak{m})e_{\mathfrak{a}^{-1}\mathfrak{m}}(1)=D(\Phi_{\mathfrak{a}}^{\tilde{\mathfrak{m}}})P(\tilde{\mathfrak{m}},\mathfrak{a}^{-1}\tilde{\mathfrak{m}};\lambda_{\mathfrak{m}})$. Now use Proposition \ref{fia} and (\ref{identity}) to conclude.
\end{proof}
In the sequel we shall use the field $k_{\mathfrak{m}}:=H^*_{\mathfrak{e}}(\lambda_{\mathfrak{m}})$. As proved in \cite[\S4]{Hay85} $k_{\mathfrak{m}}$ is a finite abelian extension of $k$. Moreover, $H_{\mathfrak{m}}\subset k_{\mathfrak{m}}$ and
\begin{theorem}
\begin{equation}\label{stark1}
N_{k_{\mathfrak{m}}/H_{\mathfrak{m}}}(\lambda_{\mathfrak{m}})=-\lambda_{\mathfrak{m}}^{w_\infty}\quad\mathrm{is\ a\  Stark\ unit}
\end{equation}
\end{theorem}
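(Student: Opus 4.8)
The plan is to establish the two halves of the statement in turn: the norm identity $N_{k_{\mathfrak{m}}/H_{\mathfrak{m}}}(\lambda_{\mathfrak{m}})=-\lambda_{\mathfrak{m}}^{w_\infty}$, and then the fact that $-\lambda_{\mathfrak{m}}^{w_\infty}$ is a Stark unit, i.e.\ satisfies the defining properties (i)--(iii). For the norm I would determine the action of $\mathrm{Gal}(k_{\mathfrak{m}}/H_{\mathfrak{m}})$ on $\lambda_{\mathfrak{m}}$ explicitly and then conclude by the finite-field analogue of Wilson's theorem, $\prod_{\zeta\in k(\infty)^\times}\zeta=-1$.

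First I would use, from Hayes's theory, that $k_{\mathfrak{m}}/k$ is abelian, that $[k_{\mathfrak{m}}:H_{\mathfrak{m}}]=w_\infty$, that $k_{\mathfrak{m}}=H_{\mathfrak{m}}(\lambda_{\mathfrak{m}})$, and that for a prime $\mathfrak{p}$ of $\mathcal{O}_k$ prime to $\mathfrak{m}$ the Artin automorphism $\sigma_{\mathfrak{p}}=(\mathfrak{p},k_{\mathfrak{m}}/k)$ acts by $\lambda_{\mathfrak{m}}^{\sigma_{\mathfrak{p}}}=\Phi_{\mathfrak{p}}^{\tilde{\mathfrak{m}}}(\lambda_{\mathfrak{m}})$, which by $(\ref{rela3})$ equals $\xi(\mathfrak{p}^{-1}\mathfrak{m})e_{\mathfrak{p}^{-1}\mathfrak{m}}(1)$. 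By class field theory a prime $\mathfrak{p}$ splits completely in $H_{\mathfrak{m}}$ exactly when $\mathfrak{p}=\pi\mathcal{O}_k$ with $\pi\equiv 1\pmod{\mathfrak{m}}$, and by Chebotarev the Frobenius elements of such primes exhaust $\mathrm{Gal}(k_{\mathfrak{m}}/H_{\mathfrak{m}})$. For such $\mathfrak{p}$ I would compute $\lambda_{\mathfrak{m}}^{\sigma_{\mathfrak{p}}}/\lambda_{\mathfrak{m}}$: since $e_{\Gamma}(xz)=xe_{x^{-1}\Gamma}(z)$ and $e_{\mathfrak{m}}$ is $\mathbb{F}_q$-linear with $\mathfrak{m}$ as exact period lattice while $\pi-1\in\mathfrak{m}$, one gets $e_{\mathfrak{p}^{-1}\mathfrak{m}}(1)=\pi^{-1}e_{\mathfrak{m}}(\pi)=\pi^{-1}e_{\mathfrak{m}}(1)$, whereas $(\ref{cond})$ and $(\ref{pieuvre})$ give $\xi(\mathfrak{p}^{-1}\mathfrak{m})/\xi(\mathfrak{m})=D(\Phi_{\mathfrak{p}}^{\tilde{\mathfrak{m}}})=s_{\Phi^{\tilde{\mathfrak{m}}}}(\pi)^{-1}\pi$. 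Hence, using that $\Phi^{\tilde{\mathfrak{m}}}$ is $\mathbf{sgn}$-normalized, $\lambda_{\mathfrak{m}}^{\sigma_{\mathfrak{p}}}/\lambda_{\mathfrak{m}}=s_{\Phi^{\tilde{\mathfrak{m}}}}(\pi)^{-1}=\mathbf{sgn}(\pi)^{-\tau_{\tilde{\mathfrak{m}}}}\in k(\infty)^\times$. So every conjugate of $\lambda_{\mathfrak{m}}$ over $H_{\mathfrak{m}}$ is a $k(\infty)^\times$-multiple of $\lambda_{\mathfrak{m}}$; since $k(\infty)^\times\subset H\subset H_{\mathfrak{m}}$ is fixed by $\mathrm{Gal}(k_{\mathfrak{m}}/H_{\mathfrak{m}})$, the rule $\sigma\mapsto\lambda_{\mathfrak{m}}^{\sigma}/\lambda_{\mathfrak{m}}$ is a homomorphism $\mathrm{Gal}(k_{\mathfrak{m}}/H_{\mathfrak{m}})\longrightarrow k(\infty)^\times$, injective since $k_{\mathfrak{m}}=H_{\mathfrak{m}}(\lambda_{\mathfrak{m}})$, and therefore a bijection because both groups have order $w_\infty$. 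In particular $\lambda_{\mathfrak{m}}^{w_\infty}\in H_{\mathfrak{m}}$ and $N_{k_{\mathfrak{m}}/H_{\mathfrak{m}}}(\lambda_{\mathfrak{m}})=\bigl(\prod_{\sigma}\lambda_{\mathfrak{m}}^{\sigma}/\lambda_{\mathfrak{m}}\bigr)\lambda_{\mathfrak{m}}^{w_\infty}=\bigl(\prod_{\zeta\in k(\infty)^\times}\zeta\bigr)\lambda_{\mathfrak{m}}^{w_\infty}=-\lambda_{\mathfrak{m}}^{w_\infty}$.

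Next I would check that $\varepsilon:=-\lambda_{\mathfrak{m}}^{w_\infty}$ satisfies (i)--(iii). Property (i) is immediate: $-\varepsilon=\lambda_{\mathfrak{m}}^{w_\infty}$ with $\lambda_{\mathfrak{m}}\in k_{\mathfrak{m}}$ and $\mu_{w_\infty}=k(\infty)^\times\subset k_{\mathfrak{m}}$, so $H_{\mathfrak{m}}((-\varepsilon)^{1/w_\infty})=k_{\mathfrak{m}}$, whence $H_{\mathfrak{m}}(\varepsilon^{1/w_\infty})\subset k_{\mathfrak{m}}(\mu_{2w_\infty})$, which is abelian over $k$ as a compositum of abelian extensions. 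Property (ii) would follow from the classical computation of the factorisation in $\mathcal{O}_{k_{\mathfrak{m}}}$ of the primitive torsion point $\lambda_{\mathfrak{m}}=e_{\tilde{\mathfrak{m}}}(\xi(\mathfrak{m}))$ (the Drinfel'd-module analogue of the fact that $1-\zeta$ is a unit when $\zeta$ has non-prime-power order): $\lambda_{\mathfrak{m}}$ is a unit when $\mathfrak{m}$ has at least two prime divisors, while if $\mathfrak{m}=\mathfrak{q}^e$ then $\lambda_{\mathfrak{m}}\mathcal{O}_{k_{\mathfrak{m}}}$ is a power of the prime of $\mathcal{O}_{k_{\mathfrak{m}}}$ above $\mathfrak{q}$; taking norms to $H_{\mathfrak{m}}$ and bookkeeping the ramification indices at $\mathfrak{q}$ would then yield $\varepsilon\mathcal{O}_{H_{\mathfrak{m}}}=(\mathfrak{q})_{\mathfrak{m}}^{w_\infty/w_k}$.

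The hard part is property (iii), the formula $L_{\mathfrak{m}}(0,\chi)=w_\infty^{-1}\sum_{\sigma}\chi(\sigma)v_\infty(\varepsilon^{\sigma})$: in view of $\varepsilon=-\lambda_{\mathfrak{m}}^{w_\infty}$ this amounts to identifying the $\infty$-adic valuations of the conjugates of $\lambda_{\mathfrak{m}}$ with the values at $s=0$ of the partial zeta functions of $H_{\mathfrak{m}}/k$, which is exactly the function-field abelian Stark conjecture; I would not reprove it but cite it, together with the facts about $k_{\mathfrak{m}}$ used above, from \cite[\S4]{Hay85}. Indeed the whole statement can be assembled from \cite[\S4]{Hay85}, the only genuinely new ingredient being the explicit shape $-\lambda_{\mathfrak{m}}^{w_\infty}$ of the norm, which the computation of the second paragraph makes transparent.
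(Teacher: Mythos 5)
You take a genuinely different route from the paper, which proves this by bare citation: the norm identity is attributed to Theorem 4.17 of \cite{Hay85}, and the Stark-unit verification to \S\S 4 and 6 of \cite{Hay85}. You instead re-derive the norm identity from the formulas (\ref{rela3}), (\ref{cond}) and (\ref{pieuvre}) already set up in this section, which makes the appearance of the $-1$ transparent (Wilson's theorem in $k(\infty)^\times$, valid in every characteristic since $-1=1$ when $\rho=2$). Your key computation
$\lambda_{\mathfrak{m}}^{\sigma_{\mathfrak{p}}}/\lambda_{\mathfrak{m}}=s_{\Phi^{\tilde{\mathfrak{m}}}}(\pi)^{-1}=\mathbf{sgn}(\pi)^{-\tau_{\tilde{\mathfrak{m}}}}$
is correct: with $\mathfrak{p}=\pi\mathcal{O}_k$, $\pi\equiv1\pmod{\mathfrak{m}}$, one has $e_{\mathfrak{p}^{-1}\mathfrak{m}}(1)=\pi^{-1}e_{\mathfrak{m}}(\pi)=\pi^{-1}e_{\mathfrak{m}}(1)$ by $\mathbb{F}_q$-linearity and periodicity, while $\xi(\mathfrak{p}^{-1}\mathfrak{m})=s_{\Phi^{\tilde{\mathfrak{m}}}}(\pi)^{-1}\pi\,\xi(\mathfrak{m})$, and the factors of $\pi$ cancel. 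The cocycle-to-homomorphism step works because $k(\infty)^\times\subset H\subset H_{\mathfrak{m}}$ is fixed by $\mathrm{Gal}(k_{\mathfrak{m}}/H_{\mathfrak{m}})$, and injectivity plus equality of orders gives the bijection. Two facts should be cited from \cite[\S4]{Hay85} rather than merely asserted: that $k_{\mathfrak{m}}=H_{\mathfrak{m}}(\lambda_{\mathfrak{m}})$ (needed for injectivity of $\sigma\mapsto\lambda_{\mathfrak{m}}^\sigma/\lambda_{\mathfrak{m}}$), and the precise class-field-theoretic description of $\mathrm{Gal}(k_{\mathfrak{m}}/H_{\mathfrak{m}})$ as realized by Artin symbols of principal ideals with generator $\equiv1\pmod{\mathfrak{m}}$. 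For properties (ii) and (iii) you still defer to Hayes, as does the paper, so the overall dependence on \cite{Hay85} is unchanged; what your version buys is that the first half of the statement becomes self-contained given the preceding lemmas, at the modest cost of repeating what is essentially Hayes's own argument for his Theorem 4.17.
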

\begin{proof} The equality is Theorem 4.17 of \cite{Hay85}. The fact that $-\lambda_{\mathfrak{m}}^{w_\infty}$ is a Stark unit is also proved in \cite[\S\S 4 and 6]{Hay85}. 
\end{proof}
\begin{lemma}\label{unlemme} If $\mathfrak{q}$ is a prime ideal of $\mathcal{O}_k$ then
\begin{equation}\label{distribution}
N_{k_{\mathfrak{mq}}/k_{\mathfrak{m}}}(\lambda_{\mathfrak{mq}})=\left\lbrace
\begin{array}{cc}
\lambda_{\mathfrak{m}} & \mathrm{if}\ \mathfrak{q}\vert\mathfrak{m}\\
\lambda_{\mathfrak{m}}^{1-\mathrm{Fr}(\mathfrak{q})^{-1}} & \mathrm{if}\ \mathfrak{q}\nmid\mathfrak{m}
\end{array}
\right.
\end{equation}  
where $\mathrm{Fr}(\mathfrak{q})$ is the Frobenius of $\mathfrak{q}$ in $\mathrm{Gal}(k_{\mathfrak{m}}/k)$.
\end{lemma}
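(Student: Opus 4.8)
The plan is to reduce the whole statement to a polynomial identity furnished by~(\ref{rela3}). Applying~(\ref{rela3}) with $\mathfrak{a}=\mathfrak{q}$ and with $\mathfrak{mq}$ playing the role of $\mathfrak{m}$, and using $\mathfrak{q}^{-1}\mathfrak{mq}=\mathfrak{m}$, one obtains the key identity
\[
\lambda_{\mathfrak{m}}=\Phi_{\mathfrak{q}}^{\widetilde{\mathfrak{mq}}}(\lambda_{\mathfrak{mq}}).
\]
By Proposition~\ref{fia} the polynomial $\Phi_{\mathfrak{q}}^{\widetilde{\mathfrak{mq}}}(t)$ is monic of degree $N(\mathfrak{q})$, it is $\mathbb{F}_q$-linear (hence has zero constant term), and its coefficients lie in $H^*_{\mathfrak{e}}$. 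In particular $\lambda_{\mathfrak{m}}\in H^*_{\mathfrak{e}}(\lambda_{\mathfrak{mq}})=k_{\mathfrak{mq}}$, so $k_{\mathfrak{m}}=H^*_{\mathfrak{e}}(\lambda_{\mathfrak{m}})\subseteq k_{\mathfrak{mq}}=k_{\mathfrak{m}}(\lambda_{\mathfrak{mq}})$, and $f(t):=\Phi_{\mathfrak{q}}^{\widetilde{\mathfrak{mq}}}(t)-\lambda_{\mathfrak{m}}\in k_{\mathfrak{m}}[t]$ is monic of degree $N(\mathfrak{q})$, with constant term $-\lambda_{\mathfrak{m}}$, and vanishes at $\lambda_{\mathfrak{mq}}$. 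Finally, Hayes' explicit class field theory (\cite[\S4]{Hay85}) gives $\mathrm{Gal}(k_{\mathfrak{m}}/H^*_{\mathfrak{e}})\cong(\mathcal{O}_k/\mathfrak{m})^\times$ for every $\mathfrak{m}$, so, exactly as for classical ray class groups, $[k_{\mathfrak{mq}}:k_{\mathfrak{m}}]$ equals $N(\mathfrak{q})$ when $\mathfrak{q}\mid\mathfrak{m}$ and $N(\mathfrak{q})-1$ when $\mathfrak{q}\nmid\mathfrak{m}$.

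Suppose first $\mathfrak{q}\mid\mathfrak{m}$. Then $[k_{\mathfrak{mq}}:k_{\mathfrak{m}}]=N(\mathfrak{q})=\deg f$, and since $\lambda_{\mathfrak{mq}}$ generates $k_{\mathfrak{mq}}$ over $k_{\mathfrak{m}}$, the polynomial $f$ is its minimal polynomial; hence $N_{k_{\mathfrak{mq}}/k_{\mathfrak{m}}}(\lambda_{\mathfrak{mq}})=(-1)^{N(\mathfrak{q})}(-\lambda_{\mathfrak{m}})=\lambda_{\mathfrak{m}}$, the sign being harmless because $N(\mathfrak{q})$ is a power of $\rho$. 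Suppose now $\mathfrak{q}\nmid\mathfrak{m}$. The minimal polynomial $g$ of $\lambda_{\mathfrak{mq}}$ over $k_{\mathfrak{m}}$ has degree $N(\mathfrak{q})-1$ and divides $f$, whence $f=g\cdot(t-\lambda^{*})$ for a unique $\lambda^{*}\in k_{\mathfrak{m}}$; by construction $\lambda^{*}$ is an $\mathfrak{m}$-division point of $\Phi^{\widetilde{\mathfrak{mq}}}$ with $\Phi_{\mathfrak{q}}^{\widetilde{\mathfrak{mq}}}(\lambda^{*})=\lambda_{\mathfrak{m}}$. Comparing constant terms of $f$ and $g(t-\lambda^{*})$ gives $N_{k_{\mathfrak{mq}}/k_{\mathfrak{m}}}(\lambda_{\mathfrak{mq}})=(-1)^{N(\mathfrak{q})-1}\lambda_{\mathfrak{m}}(\lambda^{*})^{-1}=\lambda_{\mathfrak{m}}(\lambda^{*})^{-1}$. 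The lemma is thus reduced to the identification $\lambda^{*}=\lambda_{\mathfrak{m}}^{\mathrm{Fr}(\mathfrak{q})^{-1}}$.

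This identification is the main obstacle. I would establish it by reducing modulo a prime $\mathfrak{Q}$ of $k_{\mathfrak{mq}}$ above $\mathfrak{q}$, which is legitimate since $\mathfrak{q}\nmid\mathfrak{m}$ makes $\mathfrak{q}$ unramified in $k_{\mathfrak{m}}/k$ and since the $\mathbf{sgn}$-normalized module $\Phi^{\widetilde{\mathfrak{mq}}}$ has good reduction at $\mathfrak{q}$ (its coefficients lie in $H^*_{\mathfrak{e}}$, which is unramified over $k$ away from $\infty$). As $\Phi^{\widetilde{\mathfrak{mq}}}$ has rank one, the reduction of $\Phi_{\mathfrak{q}}^{\widetilde{\mathfrak{mq}}}$ is purely inseparable, hence, being monic, equals $t\mapsto t^{N(\mathfrak{q})}$; reducing $\Phi_{\mathfrak{q}}^{\widetilde{\mathfrak{mq}}}(\lambda^{*})=\lambda_{\mathfrak{m}}$ then yields $\lambda_{\mathfrak{m}}\equiv(\lambda^{*})^{N(\mathfrak{q})}\equiv(\lambda^{*})^{\mathrm{Fr}(\mathfrak{q})}\pmod{\mathfrak{Q}}$, the second congruence because $\mathrm{Fr}(\mathfrak{q})$ is the Frobenius at $\mathfrak{Q}\cap k_{\mathfrak{m}}$ and $\lambda^{*}\in k_{\mathfrak{m}}$. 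Now $\mathrm{Fr}(\mathfrak{q})$, acting on coefficients, carries $\Phi^{\widetilde{\mathfrak{mq}}}$ to $\Phi^{\widetilde{\mathfrak{q}^{-1}\mathfrak{mq}}}=\Phi^{\tilde{\mathfrak{m}}}$ (Hayes' reciprocity for $H^*_{\mathfrak{e}}/k$), so $(\lambda^{*})^{\mathrm{Fr}(\mathfrak{q})}$, like $\lambda_{\mathfrak{m}}$, is an $\mathfrak{m}$-division point of $\Phi^{\tilde{\mathfrak{m}}}$; since the reduction modulo $\mathfrak{Q}$ is injective on $\Phi^{\tilde{\mathfrak{m}}}[\mathfrak{m}]$ (again because $\mathfrak{q}\nmid\mathfrak{m}$), the congruence forces $\lambda_{\mathfrak{m}}=(\lambda^{*})^{\mathrm{Fr}(\mathfrak{q})}$, i.e.\ $\lambda^{*}=\lambda_{\mathfrak{m}}^{\mathrm{Fr}(\mathfrak{q})^{-1}}$, which finishes the proof. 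The delicate points throughout are keeping track of the normalizing scalars $\xi(\cdot)$ and using correctly that $\mathrm{Gal}$ permutes the $\mathbf{sgn}$-normalized Drinfel'd modules compatibly with the isogenies $\Phi_{\mathfrak{q}}$; alternatively, the identification of $\lambda^{*}$ can be read off directly from Hayes' treatment of the Galois action on division points in \cite[\S\S4--5]{Hay85}.
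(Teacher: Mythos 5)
Your proof is essentially correct but takes a genuinely different route from the paper's. The paper computes the norm directly as a product over the explicit set $X$ of Artin symbols parametrizing $\mathrm{Gal}(k_{\mathfrak{mq}}/k_{\mathfrak{m}})$, then manipulates the product of exponential values $e_\Gamma(x\xi)$ using the distribution relation~(\ref{rela1}) and, in the $\mathfrak{q}\nmid\mathfrak{m}$ case, introduces an auxiliary element $x_0$ to invoke Hayes' reciprocity law (\cite[Theorem 4.12]{Hay85}) applied to $\lambda_{\mathfrak{m}}$. You instead use the single identity $\lambda_{\mathfrak{m}}=\Phi_{\mathfrak{q}}^{\widetilde{\mathfrak{mq}}}(\lambda_{\mathfrak{mq}})$ (a correct application of~(\ref{rela3})), read off the norm from the constant term of the minimal polynomial of $\lambda_{\mathfrak{mq}}$ over $k_{\mathfrak{m}}$, and, in the nontrivial case, identify the extraneous $k_{\mathfrak{m}}$-rational root $\lambda^*$ by reducing the torsion points modulo a prime above $\mathfrak{q}$ and exploiting injectivity of reduction on prime-to-$\mathfrak{q}$ torsion. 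This is the torsion-point/minimal-polynomial style of argument rather than the lattice-exponential style, and buys a cleaner reduction to a single equation and a transparent degree count; the paper's computation is more self-contained in Hayes' lattice formalism and needs no good-reduction theory.

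One step you pass over too quickly: you assert that $\lambda^*$ is an $\mathfrak{m}$-division point of $\Phi^{\widetilde{\mathfrak{mq}}}$ ``by construction,'' but this is not immediate. What is immediate is that $\lambda^*$ is an $\mathfrak{mq}$-division point (its image under $\Phi_{\mathfrak{q}}^{\widetilde{\mathfrak{mq}}}$ is $\lambda_{\mathfrak{m}}$, which is $\mathfrak{m}$-torsion for $\Phi^{\tilde{\mathfrak{m}}}$, so $\Phi_{\mathfrak{mq}}^{\widetilde{\mathfrak{mq}}}(\lambda^*)=\Phi_{\mathfrak{m}}^{\tilde{\mathfrak{m}}}(\Phi_{\mathfrak{q}}^{\widetilde{\mathfrak{mq}}}(\lambda^*))=0$). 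To upgrade this to $\mathfrak{m}$-torsion you need to use that $\lambda^*\in k_{\mathfrak{m}}$ together with Hayes' description of the Galois action: $\mathrm{Gal}(k_{\mathfrak{mq}}/k_{\mathfrak{m}})$ acts on $\Phi^{\widetilde{\mathfrak{mq}}}[\mathfrak{mq}]=\Phi[\mathfrak{m}]\oplus\Phi[\mathfrak{q}]$ through multiplication by the kernel of $(\mathcal{O}_k/\mathfrak{mq})^\times\to(\mathcal{O}_k/\mathfrak{m})^\times$, fixing the $\mathfrak{m}$-component and acting freely-transitively on the nonzero $\mathfrak{q}$-component, so the fixed locus in $\Phi[\mathfrak{mq}]$ is exactly $\Phi[\mathfrak{m}]$. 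Without this, the final appeal to injectivity of reduction on $\Phi^{\tilde{\mathfrak{m}}}[\mathfrak{m}]$ does not apply, since $\mathfrak{mq}$-torsion does \emph{not} reduce injectively at $\mathfrak{q}$. Filling this in (or, as you suggest, citing Hayes' explicit Galois action on division points) makes the proof complete; you should also make explicit that the coefficients of $\Phi^{\widetilde{\mathfrak{mq}}}$ are $\mathfrak{q}$-integral with unit leading terms, which is what legitimizes the reduction step.
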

\begin{proof} Let $\Gamma:=\xi(\mathfrak{mq})\mathfrak{mq}$, $\Phi:=\Phi^\Gamma$ and $\xi:=\xi(\mathfrak{mq})$. Let $X$ be a complete set of representatives modulo $\mathfrak{mq}$ of the kernel of the natural map
\begin{equation*}
(\mathcal{O}_k/\mathfrak{mq})^\times\longrightarrow(\mathcal{O}_k/\mathfrak{m})^\times.
\end{equation*}
We may choose $X$ so that $\mathbf{sgn}(x)=1$ for all $x\in X$. By \cite[formula (4.8)]{Hay85} we see that $\mathrm{Gal}(k_{\mathfrak{mq}}/k_{\mathfrak{m}})$ is equal to the set $\{(x\mathcal{O}_k,k_{\mathfrak{mq}}/k)$, $x\in X\}$. Since $\lambda_{\mathfrak{mq}}=e_\Gamma(\xi)$ and $\Phi_x=\Phi_{x\mathcal{O}_k}$ if $\mathbf{sgn}(x)=1$, the theorem 4.12 of \cite{Hay85} and the above formula (\ref{exponent}) give
\begin{equation*}
N_{k_{\mathfrak{mq}}/k_{\mathfrak{m}}}(\lambda_{\mathfrak{mq}})=\prod_{x\in X}e_\Gamma(x\xi).
\end{equation*}
Suppose for the moment that $\mathfrak{q}\vert\mathfrak{m}$. Then, the set $Y:=\{\xi t,\ 1-t\in X\}$ is a complete system of representatives of $\mathfrak{q}^{-1}\Gamma/\Gamma$. This allows us to deduce
\begin{equation*}
\prod_{x\in X}e_\Gamma(x\xi)=\prod_{y\in Y}e_\Gamma(\xi-y)=e_{\mathfrak{q}^{-1}\Gamma}(\xi)D(\Phi_{\mathfrak{q}}),
\end{equation*}
where the last equality is a direct application of (\ref{rela1}) and (\ref{identity}). But, on one hand, it is obvious that $e_{\mathfrak{q}^{-1}\Gamma}(\xi)=\xi e_{\mathfrak{m}}(1)$ and on the other hand $\xi D(\Phi_{\mathfrak{q}})=\xi(\mathfrak{m})$ by (\ref{cond}). Thus we proved the norm formula when $\mathfrak{q}\vert\mathfrak{m}$.\par
Let us now assume that $\mathfrak{q}\nmid\mathfrak{m}$.
Let $t_0\in\mathfrak{m}$ be such that $t_0\equiv1$ modulo $\mathfrak{q}$, then, the set $Z:=\{\xi t_0\}\cup\{\xi t, 1-t\in X\}$ give a complete system of representatives of $\mathfrak{q}^{-1}\Gamma/\Gamma$. Therefore (\ref{rela1}) implies
\begin{equation*}
e_\Gamma(\xi-\xi t_0)\prod_{x\in X}e_\Gamma(x\xi)=\prod_{z\in Z}e_\Gamma(\xi-z)=D(\Phi_{\mathfrak{q}})e_{\mathfrak{q}^{-1}\Gamma}(\xi)=\lambda_{\mathfrak{m}}.
\end{equation*}
Thus we have
\begin{equation*}
N_{k_{\mathfrak{mq}}/k_{\mathfrak{m}}}(\lambda_{\mathfrak{mq}})=\frac{\lambda_{\mathfrak{m}}}{\xi(\mathfrak{mq})e_{\mathfrak{mq}}(x_0)},
\end{equation*}
where $x_0=1-t_0$. But, we may choose $x_0$ such that $\mathbf{sgn}(x_0)=1$. In particular, $D(\Phi_{x_0\mathcal{O}_k})=x_0$ and $\xi(\mathfrak{mq})e_{\mathfrak{mq}}(x_0)=\xi(x_0^{-1}\mathfrak{mq})e_{x_0^{-1}\mathfrak{mq}}(1)=\xi(\mathfrak{a}^{-1}\mathfrak{m})e_{\mathfrak{a}^{-1}\mathfrak{m}}(1)$, where $\mathfrak{a}:=x_0\mathfrak{q}^{-1}$. By (\ref{rela3}), \cite[Theorem 4.12]{Hay85} and the fact that $(x_0\mathcal{O}_k, k_{\mathfrak{m}}/k)=1$ we obtain 
\begin{equation}\label{asavoir}
\xi(\mathfrak{mq})e_{\mathfrak{mq}}(x_0)=\lambda_{\mathfrak{m}}^{\mathrm{Fr}(\mathfrak{q})^{-1}}.
\end{equation}
This completes the proof of the lemma.
\end{proof}
\begin{lemma}\label{poisson} Assume $\mathfrak{q}\nmid\mathfrak{m}$. Then,
\begin{equation}\label{congruence}
\lambda_{\mathfrak{mq}}-\lambda_{\mathfrak{m}}^{\mathrm{Fr}(\mathfrak{q})^{-1}}=\lambda_{\mathfrak{q}}^{\sigma_{\mathfrak{m}}^{-1}}.
\end{equation}
where $\sigma_{\mathfrak{m}}:=(\mathfrak{m}, k_{\mathfrak{q}}/k)$ is the automorphism of $k_{\mathfrak{q}}/k$ associated to $\mathfrak{m}$ by the Artin map.
\end{lemma}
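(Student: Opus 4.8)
The plan is to run, step by step, the same torsion‑point computation as in the proof of Lemma~\ref{unlemme}, but using the \emph{additivity} of the exponential $e_\Gamma$ in place of the multiplicative norm bookkeeping there. Throughout I would set $\Gamma:=\tilde{\mathfrak{mq}}=\xi(\mathfrak{mq})\mathfrak{mq}$ and $\xi:=\xi(\mathfrak{mq})$, so that by the homogeneity $e_{\omega\Gamma'}(\omega z)=\omega e_{\Gamma'}(z)$ of the exponential one has $\lambda_{\mathfrak{mq}}=\xi(\mathfrak{mq})e_{\mathfrak{mq}}(1)=e_\Gamma(\xi)$. As a first step I would rewrite the left side as a single value of $e_\Gamma$: choosing, exactly as in the proof of Lemma~\ref{unlemme}, an element $x_0\in\mathfrak{q}$ with $x_0\equiv 1\ (\mathrm{mod}\ \mathfrak{m})$ and $\mathbf{sgn}(x_0)=1$, and putting $t_0:=1-x_0$ (so $t_0\in\mathfrak{m}$ and $t_0\equiv 1\ (\mathrm{mod}\ \mathfrak{q})$), formula (\ref{asavoir}) together with the homogeneity gives $\lambda_{\mathfrak{m}}^{\mathrm{Fr}(\mathfrak{q})^{-1}}=\xi(\mathfrak{mq})e_{\mathfrak{mq}}(x_0)=e_\Gamma(\xi x_0)$. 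Since $e_\Gamma$ is $\mathbb{F}_q$-linear, hence additive,
\[
\lambda_{\mathfrak{mq}}-\lambda_{\mathfrak{m}}^{\mathrm{Fr}(\mathfrak{q})^{-1}}=e_\Gamma(\xi)-e_\Gamma(\xi x_0)=e_\Gamma(\xi(1-x_0))=e_\Gamma(\xi t_0),
\]
so everything is reduced to proving $e_\Gamma(\xi t_0)=\lambda_{\mathfrak{q}}^{\sigma_{\mathfrak{m}}^{-1}}$.

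The second step would be to run the computation of Lemma~\ref{unlemme} ``at $\mathfrak{q}$''. Since $\xi(t-t')\in\Gamma$ as soon as $t\equiv t'\ (\mathrm{mod}\ \mathfrak{mq})$, the quantity $e_\Gamma(\xi t_0)$ depends only on the class of $t_0$ modulo $\mathfrak{mq}$; hence, by the same sign‑adjustment used in the proof of Lemma~\ref{unlemme}, I may replace $t_0$ by a representative of its class which additionally has $\mathbf{sgn}(t_0)=1$ (still $t_0\in\mathfrak{m}$, $t_0\equiv1\ (\mathrm{mod}\ \mathfrak{q})$). Then (\ref{pieuvre}) gives $D(\Phi_{t_0\mathcal{O}_k}^{\tilde{\mathfrak{mq}}})=t_0$, hence by (\ref{cond}) $\xi(t_0^{-1}\mathfrak{mq})=t_0\,\xi(\mathfrak{mq})$, and another application of the homogeneity yields
\[
e_\Gamma(\xi t_0)=\xi(\mathfrak{mq})e_{\mathfrak{mq}}(t_0)=\xi(t_0^{-1}\mathfrak{mq})e_{t_0^{-1}\mathfrak{mq}}(1).
\]
Putting $\mathfrak{b}:=t_0\mathfrak{m}^{-1}$, which is an integral ideal (as $t_0\in\mathfrak{m}$) prime to $\mathfrak{q}$ (as $v_{\mathfrak{q}}(t_0)=0$ and $\mathfrak{q}\nmid\mathfrak{m}$), and using $t_0^{-1}\mathfrak{mq}=\mathfrak{b}^{-1}\mathfrak{q}$, relation (\ref{rela3}) applied with $\mathfrak{q}$ in place of $\mathfrak{m}$ and $\mathfrak{b}$ in place of $\mathfrak{a}$ gives $e_\Gamma(\xi t_0)=\Phi_{\mathfrak{b}}^{\tilde{\mathfrak{q}}}(\lambda_{\mathfrak{q}})$.

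The third step converts $\Phi_{\mathfrak{b}}$ into a Galois action. By the reciprocity statement \cite[Theorem 4.12]{Hay85} — the one already invoked in Lemma~\ref{unlemme} — $\Phi_{\mathfrak{b}}^{\tilde{\mathfrak{q}}}(\lambda_{\mathfrak{q}})=\lambda_{\mathfrak{q}}^{(\mathfrak{b},\,k_{\mathfrak{q}}/k)}$. I would then decompose $(\mathfrak{b},k_{\mathfrak{q}}/k)=(t_0\mathcal{O}_k,k_{\mathfrak{q}}/k)(\mathfrak{m},k_{\mathfrak{q}}/k)^{-1}$: the second factor is $\sigma_{\mathfrak{m}}$ by definition, and the first is trivial because $t_0\equiv1\ (\mathrm{mod}\ \mathfrak{q})$ together with $\mathbf{sgn}(t_0)=1$ put the principal ideal $t_0\mathcal{O}_k$ in the kernel of the Artin map of $k_{\mathfrak{q}}/k$. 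This gives $e_\Gamma(\xi t_0)=\lambda_{\mathfrak{q}}^{\sigma_{\mathfrak{m}}^{-1}}$, as required.

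The hard part will be the third step: one has to handle correctly the Artin symbol of the principal ideal $t_0\mathcal{O}_k$ in $k_{\mathfrak{q}}/k$, a field ramified precisely at $\mathfrak{q}$ and $\infty$, so that its triviality requires \emph{both} the finite congruence $t_0\equiv1\ (\mathrm{mod}\ \mathfrak{q})$ and the archimedean normalization $\mathbf{sgn}(t_0)=1$ — which is exactly why the (harmless) freedom of the second step to move $t_0$ modulo $\mathfrak{mq}$ is needed, just as in the proof of Lemma~\ref{unlemme}. Everything else is formal manipulation with the homogeneity and with the distribution relations (\ref{rela1}), (\ref{lineaire}), (\ref{cond}), (\ref{rela3}), (\ref{pieuvre}) already established; conceptually the computation is the Drinfel'd‑module analogue of expressing a generator of the $\mathfrak{mq}$‑torsion as the sum of a Galois conjugate of a generator of the $\mathfrak{m}$‑torsion and one of a generator of the $\mathfrak{q}$‑torsion, i.e. the additive shadow of the classical factorization of a primitive $mn$‑th root of unity for coprime $m$, $n$.
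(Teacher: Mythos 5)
Your proposal is correct and gives a genuinely cleaner route to (\ref{congruence}) than the paper's own proof. You both begin the same way: additivity of $e_\Gamma$ plus (\ref{asavoir}) reduces the claim to $e_\Gamma(\xi t_0)=\lambda_{\mathfrak{q}}^{\sigma_{\mathfrak{m}}^{-1}}$, and the identifications $t_0^{-1}\mathfrak{mq}=\mathfrak{b}^{-1}\mathfrak{q}$, (\ref{cond}), (\ref{rela3}) and Hayes' Theorem~4.12 then turn $e_\Gamma(\xi t_0)$ into a Galois conjugate of $\lambda_{\mathfrak{q}}$. The difference is in how the archimedean normalization is handled. The paper keeps the original $t_0$ (which has $\mathbf{sgn}(1-t_0)=1$ but in general $\mathbf{sgn}(t_0)\neq 1$), so the factor $s_\Phi(t_0)$ appears in $\xi(t_0^{-1}\mathfrak{mq})=s_\Phi(t_0)^{-1}t_0\,\xi(\mathfrak{mq})$ and must be carried to the end; it only cancels after invoking Hayes' Corollary~4.14, the $*$-operation $\Phi'=\mathfrak{m}*\Phi$, and the transformation law $s_{\Phi'}(t_0)=s_\Phi(t_0)^{\sigma_{\mathfrak{m}}}$ of \cite[formula (4.5)]{Hay85}. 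You instead observe — after (\ref{asavoir}) has already been used, so the constraint $\mathbf{sgn}(1-t_0)=1$ is no longer needed — that $e_\Gamma(\xi t_0)$ depends only on $t_0$ modulo $\mathfrak{mq}$, and re-choose the representative so that $\mathbf{sgn}(t_0)=1$; this is the same kind of sign-adjustment the paper itself performs on $X$ and on $x_0$ in Lemma~\ref{unlemme}, and it is legitimate here for the same reason (any nonzero residue class modulo $\mathfrak{mq}$ has representatives of every sign, by Riemann--Roch, provided one takes the degree large). With $\mathbf{sgn}(t_0)=1$ the factor $s_\Phi(t_0)$ is $1$, the principal ideal $t_0\mathcal{O}_k$ is in the Artin kernel of $k_{\mathfrak{q}}/k$ (since $t_0\equiv1\pmod{\mathfrak{q}}$ and $\mathbf{sgn}(t_0)=1$, exactly as the paper argues for $x_0\mathcal{O}_k$ in $k_{\mathfrak{m}}/k$), and so $(\mathfrak{b},k_{\mathfrak{q}}/k)=\sigma_{\mathfrak{m}}^{-1}$ without any further bookkeeping. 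What your route buys is that the $*$-operation, \cite[Proposition 5.10]{Hay79}, \cite[Theorem 8.14]{Hay92}, and the transformation formula (4.5) all disappear from the proof; what you pay is only having to state explicitly the re-choice of $t_0$ and to note that it preserves both side conditions $t_0\in\mathfrak{m}$ and $t_0\equiv1\pmod{\mathfrak{q}}$ (it does, since you only move $t_0$ by an element of $\mathfrak{mq}$), and that the constraint on $\mathbf{sgn}(1-t_0)$ has already been consumed in the first step so cannot conflict with it.
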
 
\begin{proof} let us keep the notation of the proof of Lemma \ref{unlemme}. Then, an easy computation based on (\ref{asavoir}) gives
\begin{equation}
\lambda_{\mathfrak{mq}}-\lambda_{\mathfrak{m}}^{\mathrm{Fr}(\mathfrak{q})^{-1}}=\xi(\mathfrak{mq})e_{\mathfrak{mq}}(t_0).
\end{equation}
On the other hand, (\ref{pieuvre}) and (\ref{cond}) give $\xi(t_0^{-1}\mathfrak{mq})=s_\Phi(t_0)^{-1}t_0\xi(\mathfrak{mq})$. Thus, if we set $\mathfrak{a}:=t_0\mathfrak{m}^{-1}\subset\mathcal{O}_k$, then we obtain
\begin{equation*}
\xi(\mathfrak{mq})e_{\mathfrak{mq}}(t_0)=s_\Phi(t_0)\xi(\mathfrak{a}^{-1}\mathfrak{q})e_{\mathfrak{a}^{-1}\mathfrak{q}}(1)=s_\Phi(t_0)\Phi_{\mathfrak{a}}^{\tilde{\mathfrak{q}}}(\lambda_{\mathfrak{q}}).
\end{equation*}
The last equality is a special case of (\ref{rela3}). By \cite[Theorem 4.12 and Corollary 4.14]{Hay85} we have $\Phi_{\mathfrak{a}}^{\tilde{\mathfrak{q}}}(\lambda_{\mathfrak{q}})=(s_{\Phi'}(t_0)^{-1}\lambda_{\mathfrak{q}})^{\sigma_{\mathfrak{m}}^{-1}}$, where $\Phi':=\Phi^{\tilde{\mathfrak{q}}}$. Let $*$ be the operation introduced in \cite[\S3]{Hay79}. Then, $\Phi'=\mathfrak{m}*\Phi$, as proved in \cite[Proposition 5.10]{Hay79} or \cite[Theorem 8.14]{Hay92} or the proof of \cite[Theorem 5.1]{Hay85}. By \cite[formula (4.5)]{Hay85} we have $s_{\Phi'}(t_0)=s_\Phi(t_0)^{\sigma_{\mathfrak{m}}}$. This proves the lemma.  
\end{proof}
Let us keep the sign-function $\mathbf{sgn}$ fixed. Then, for any nonzero proper ideal $\mathfrak{m}$ of $\mathcal{O}_K$ and any $\eta\in\mathcal{F}_{k_{\mathfrak{m}}}$ the element $(\lambda_{\mathfrak{m}})^\eta$ belongs to $H_{\mathfrak{m}}$, thanks to \cite[Corollary 4.14]{Hay85}. Moreover, by (\ref{stark1}) and \cite[Lemma 2.5]{Hay85} we see that $\mathcal{P}_K$ is generated by $\mu_K$ and by all the norms
\begin{equation*}
\lambda_{\mathfrak{m}}(\mathfrak{g}):=N_{H_{\mathfrak{m}}/H_{\mathfrak{m}}\cap K}(\lambda_{\mathfrak{m}}^{N(\mathfrak{g})-(\mathfrak{g},\, k_{\mathfrak{m}}/k)}),
\end{equation*}
where $\mathfrak{m}$ and $\mathfrak{g}$ are any nonzero coprime ideals of $\mathcal{O}_k$ such that $\mathfrak{m}\neq\mathcal{O}_k$. Furthermore, the map $\alpha:\mathcal{S}(\mathfrak{mg})\longrightarrow k_\infty^\times$, defined by
\begin{equation*}
\alpha(\mathfrak{a}):=N_{KH_{\mathfrak{ma}}/K(\mathfrak{a})}(\lambda_{\mathfrak{ma}}^{N(\mathfrak{g})-(\mathfrak{g},\, k_{\mathfrak{ma}}/k)}),
\end{equation*}
is an Euler system such that $\alpha(1)=\lambda_{\mathfrak{m}}(\mathfrak{g})$. Indeed, the properties E1 and E2 are immediately seen to be satisfied. The property E3 is a consequence of Lemma \ref{unlemme}. The property E4 follows from Lemma \ref{poisson} and \cite[Lemma 4.19]{Hay85}.
\begin{corollary}\label{sardine}If $u\in\mathcal{E}_K$ then there exist an ideal $\mathfrak{g}$ of $\mathcal{O}_k$ and an Euler system $\alpha:\mathcal{S}(\mathfrak{g})\longrightarrow k_\infty^\times$, such that $\alpha(1)=u$
\end{corollary}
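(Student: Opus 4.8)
The plan is to reduce an arbitrary $u\in\mathcal{E}_K$ to the generators of $\mathcal{P}_K$ exhibited just above the statement, and then to exploit the fact that the axioms E1--E4 defining an Euler system are stable under pointwise multiplication and inversion. Since $\mathcal{E}_K\subseteq\mathcal{P}_K$ and $\mathcal{P}_K$ is generated by $\mu_K$ together with the norms $\lambda_{\mathfrak{m}}(\mathfrak{g})$, I would first write
\begin{equation*}
u=\zeta\prod_{i=1}^{r}\lambda_{\mathfrak{m}_i}(\mathfrak{g}_i)^{n_i},
\end{equation*}
with $\zeta\in\mu_K$, $n_i\in\mathbb{Z}$, and $\mathfrak{m}_i,\mathfrak{g}_i$ nonzero coprime ideals of $\mathcal{O}_k$ with $\mathfrak{m}_i\neq\mathcal{O}_k$. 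Then I would put $\mathfrak{g}:=\prod_{i=1}^{r}\mathfrak{m}_i\mathfrak{g}_i$ and note that $\mathcal{S}(\mathfrak{g})\subseteq\mathcal{S}(\mathfrak{m}_i\mathfrak{g}_i)$ for every $i$, so that all the Euler systems used below may be regarded as functions on the single index set $\mathcal{S}(\mathfrak{g})$.

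For each $i$, the function $\alpha_i\colon\mathcal{S}(\mathfrak{g})\to k_\infty^\times$, $\alpha_i(\mathfrak{a}):=N_{KH_{\mathfrak{m}_i\mathfrak{a}}/K(\mathfrak{a})}\bigl(\lambda_{\mathfrak{m}_i\mathfrak{a}}^{\,N(\mathfrak{g}_i)-(\mathfrak{g}_i,\,k_{\mathfrak{m}_i\mathfrak{a}}/k)}\bigr)$, is the Euler system constructed above (restricted from $\mathcal{S}(\mathfrak{m}_i\mathfrak{g}_i)$ to $\mathcal{S}(\mathfrak{g})$), and $\alpha_i(1)=\lambda_{\mathfrak{m}_i}(\mathfrak{g}_i)$. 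To take care of the root of unity I would introduce the auxiliary function $\gamma\colon\mathcal{S}(\mathfrak{g})\to k_\infty^\times$, $\gamma(\mathfrak{a}):=\zeta^{c(\mathfrak{a})}$ where $c(\mathfrak{a}):=\prod_{\ell\mid\mathfrak{a}}(N(\ell)-1)/M$ (an integer, since $\ell\in\mathcal{L}$ forces $M\mid(N(\ell)-1)/w_k$). I claim $\gamma$ is an Euler system. Indeed E1 and E2 are clear because $\gamma(\mathfrak{a})\in\mu_K\subseteq\mathcal{O}_{K(\mathfrak{a})}^\times$; for E3 one uses that $\ell$ splits completely in $K$, so $\mathrm{Fr}(\ell)$ fixes $\mu_K$ and the right-hand side of E3 is trivial, whereas its left-hand side is $N_{K(\mathfrak{a}\ell)/K(\mathfrak{a})}(\zeta^{c(\mathfrak{a}\ell)})=\zeta^{M c(\mathfrak{a}\ell)}=\zeta^{(N(\ell)-1)c(\mathfrak{a})}=1$ because $\mathrm{ord}(\zeta)\mid w_K\mid N(\ell)-1$; and E4 holds because $\mathrm{Fr}(\ell)^{-1}$ likewise fixes $\mu_K$, so both sides of E4 equal $\zeta^{\,c(\mathfrak{a})(N(\ell)-1)/M}$. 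In particular $\gamma(1)=\zeta^{c(\mathcal{O}_k)}=\zeta$.

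Finally I would record the elementary remark that if two functions $\mathcal{S}(\mathfrak{g})\to k_\infty^\times$ satisfy E1--E4, then so do their product and their inverses, which is immediate because norm maps, Galois actions and reduction modulo the primes above $\ell$ are all multiplicative. Hence $\alpha:=\gamma\cdot\prod_{i=1}^{r}\alpha_i^{\,n_i}$ is an Euler system on $\mathcal{S}(\mathfrak{g})$, and
\begin{equation*}
\alpha(1)=\gamma(1)\prod_{i=1}^{r}\alpha_i(1)^{n_i}=\zeta\prod_{i=1}^{r}\lambda_{\mathfrak{m}_i}(\mathfrak{g}_i)^{n_i}=u,
\end{equation*}
as required. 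The only genuinely delicate point is the construction of the auxiliary Euler system $\gamma$ realizing the root of unity $\zeta$ as its value at the trivial ideal; all the rest is bookkeeping with the multiplicativity of E1--E4 and with the inclusions among the index sets $\mathcal{S}(\mathfrak{g})$.
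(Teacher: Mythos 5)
Your proof is correct and follows essentially the same route as the paper: reduce to the generators of $\mathcal{P}_K$ described just before the corollary, invoke the already-constructed Euler systems for the elements $\lambda_{\mathfrak{m}}(\mathfrak{g})$, observe that E1--E4 are preserved under pointwise multiplication and inversion, and then handle the root-of-unity factor separately. The paper dismisses that last step as obvious; you make it explicit with the function $\gamma(\mathfrak{a})=\zeta^{c(\mathfrak{a})}$, $c(\mathfrak{a})=\prod_{\ell\mid\mathfrak{a}}(N(\ell)-1)/M$, whose exponent is rigged precisely so that both sides of E4 coincide as elements of $K$ (not merely modulo primes above $\ell$) and both sides of E3 vanish because $w_K\mid N(\ell)-1$. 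That is a clean and correct way to realize a root of unity as $\alpha(1)$, and it is a useful elaboration of what the paper leaves implicit.
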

\begin{proof} In view of the discussion above we only have to check the corollary for the roots of unity in $K$. But this is obvious.
\end{proof}
\section{The Gras conjecture}
If $\ell\in\mathcal{L}$ then we denote by $\sigma_\ell$ a generator of the cyclic group G$_{\ell}:=\mathrm{Gal}(K(\ell)/K)$. Further, we set
\begin{equation*}
N_\ell:=\sum_{\tau\in\mathrm{G}_{\ell}}\tau\quad\mathrm{and}\quad D_\ell:=\sum_{i=0}^{M-1}i\sigma_\ell.
\end{equation*}
Let $\mathfrak{a}\in\mathcal{S}$ and let G$_{\mathfrak{a}}:=\mathrm{Gal}(K(\mathfrak{a})/K)$. Then $\mathrm{G}_{\mathfrak{a}}\simeq\prod_{\ell\vert\mathfrak{a}}\mathrm{G}_{\ell}$. Moreover, the inertia group of $\ell$ in G$_{\mathfrak{a}}$ is $\mathrm{Gal}(K(\mathfrak{a})/K(\mathfrak{a}/\ell))$, which we shall identify with G$_{\ell}$.\par
Let us now define $Y$ to be the free multiplicative $\mathbb{Z}[\mathrm{G}_{\mathfrak{a}}]$-module generated by the symbols $x(\mathfrak{b})$, $\mathfrak{b}\vert\mathfrak{a}$, and we denote by $Z$ its submodule generated by the relations
\begin{description}
\item{1.} $x(\mathfrak{b})^{\sigma-1}=1,\ \mathrm{for\ all}\ \mathfrak{b}\vert\mathfrak{a}\ \mathrm{and\ all}\ \sigma\in\mathrm{Gal}(K(\mathfrak{a})/K(\mathfrak{b}))$
\item{2.} $x(\mathfrak{b}\ell)^{N_\ell}=x(\mathfrak{b})^{1-\mathrm{Fr}(\ell)^{-1}},\ \mathrm{for\ all}\ \mathfrak{b}\in\mathcal{S}\ \mathrm{and\ all}\ \ell\in\mathcal{L}\ \mathrm{such\ that}\ \mathfrak{b}\ell\vert\mathfrak{a}.$
\end{description}
\begin{lemma}\label{dist} The $\mathbb{Z}[\mathrm{G}_{\mathfrak{a}}]$-module $X:=Y/Z$ is a free $\mathbb{Z}$-module, with basis the set
\begin{equation*}
\lbrace x(\mathfrak{b})^\sigma,\ \mathfrak{b}\vert\mathfrak{a},\sigma\in\mathrm{G}_{\mathfrak{b}}-\cup_{\mathfrak{c}\vert\mathfrak{b},\mathfrak{c}\neq\mathfrak{b}}\mathrm{G}_{\mathfrak{c}}\rbrace.
\end{equation*}
Moreover, if we set $D_{\mathfrak{b}}:=\prod_{\ell\vert\mathfrak{b}} D_\ell$ then, for all $\mathfrak{b}\vert\mathfrak{a}$ and all $\sigma\in\mathrm{G}_{\mathfrak{a}}$, $x(\mathfrak{b})^{D_{\mathfrak{b}}(\sigma-1)}\in X^M$.
\end{lemma}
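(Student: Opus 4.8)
The plan is to analyze $X = Y/Z$ by a straightforward induction on the number of prime divisors of $\mathfrak{a}$, exactly along the lines of Rubin's treatment of the analogous "universal distribution"–type module in the cyclotomic case. First I would establish that the proposed set
$$
B := \{ x(\mathfrak{b})^\sigma : \mathfrak{b}\mid\mathfrak{a},\ \sigma\in \mathrm{G}_{\mathfrak{b}} - \textstyle\bigcup_{\mathfrak{c}\mid\mathfrak{b},\mathfrak{c}\neq\mathfrak{b}}\mathrm{G}_{\mathfrak{c}} \}
$$
spans $X$ over $\mathbb{Z}$. Relation 1 lets us regard $x(\mathfrak{b})$ as a $\mathbb{Z}[\mathrm{G}_{\mathfrak{b}}]$-generator (the full group $\mathrm{G}_{\mathfrak{a}}$ acts through $\mathrm{G}_{\mathfrak{b}}$ on it), so $Y/Z$ is generated as a $\mathbb{Z}$-module by the elements $x(\mathfrak{b})^\sigma$ with $\mathfrak{b}\mid\mathfrak{a}$ and $\sigma$ running over $\mathrm{G}_{\mathfrak{b}}$. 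To cut this down to $B$, I would use relation 2: if $\sigma\in\mathrm{G}_{\mathfrak{c}}$ for some proper divisor $\mathfrak{c}$ of $\mathfrak{b}$, pick a prime $\ell\mid\mathfrak{b}$ with $\ell\nmid\mathfrak{c}$; then $\sigma$ fixes $K(\ell)$, so $\sigma N_\ell = N_\ell\sigma$ and writing $x(\mathfrak{b})^\sigma = x(\mathfrak{b})^{\sigma N_\ell/|\mathrm{G}_\ell| \cdot(\text{something})}$ — more precisely, since $\sigma$ lies in the inertia subgroup $\mathrm{G}_\ell\subset\mathrm{G}_{\mathfrak{b}}$ complementary issues aside — one reduces $x(\mathfrak{b})^{N_\ell}$ via relation 2 to $x(\mathfrak{b}/\ell)^{1-\mathrm{Fr}(\ell)^{-1}}$, i.e. to a $\mathbb{Z}$-combination of elements attached to the strictly smaller divisor $\mathfrak{b}/\ell$, and induct. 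This shows $B$ generates.

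Next, linear independence: here I would compare ranks. Counting, $\#B = \sum_{\mathfrak{b}\mid\mathfrak{a}} \#\bigl(\mathrm{G}_{\mathfrak{b}} - \bigcup_{\mathfrak{c}\mid\mathfrak{b},\mathfrak{c}\neq\mathfrak{b}}\mathrm{G}_{\mathfrak{c}}\bigr)$. Using $\mathrm{G}_{\mathfrak{b}}\simeq\prod_{\ell\mid\mathfrak{b}}\mathrm{G}_\ell$ with each $|\mathrm{G}_\ell| = M$, inclusion–exclusion gives $\#\bigl(\mathrm{G}_{\mathfrak{b}} - \bigcup\mathrm{G}_{\mathfrak{c}}\bigr) = \prod_{\ell\mid\mathfrak{b}}(M-1) = (M-1)^{\omega(\mathfrak{b})}$, hence $\#B = \sum_{\mathfrak{b}\mid\mathfrak{a}}(M-1)^{\omega(\mathfrak{b})} = M^{\omega(\mathfrak{a})} = \mathrm{rank}_{\mathbb{Z}}\,\mathbb{Z}[\mathrm{G}_{\mathfrak{a}}]$. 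On the other hand I would exhibit a surjection $Y/Z \twoheadrightarrow \mathbb{Z}[\mathrm{G}_{\mathfrak{a}}]$, or equivalently a $\mathbb{Z}[\mathrm{G}_{\mathfrak{a}}]$-module map $Y\to\mathbb{Z}[\mathrm{G}_{\mathfrak{a}}]$ killing $Z$ and hitting a spanning set — concretely, following Rubin, one sends $x(\mathfrak{b})\mapsto \prod_{\ell\mid(\mathfrak{a}/\mathfrak{b})}(1-\mathrm{Fr}(\ell)^{-1})$ (or a suitable variant making relation 2 automatic), and checks relations 1 and 2 hold. Comparing, $M^{\omega(\mathfrak{a})} = \#B \geq \mathrm{rank}_{\mathbb{Z}}(Y/Z) \geq M^{\omega(\mathfrak{a})}$, forcing $B$ to be a $\mathbb{Z}$-basis and $X$ to be free of that rank.

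Finally, for the congruence $x(\mathfrak{b})^{D_{\mathfrak{b}}(\sigma-1)}\in X^M$ for all $\mathfrak{b}\mid\mathfrak{a}$, $\sigma\in\mathrm{G}_{\mathfrak{a}}$: since $\mathrm{G}_{\mathfrak{a}}$ acts on $x(\mathfrak{b})$ through $\mathrm{G}_{\mathfrak{b}} = \prod_{\ell\mid\mathfrak{b}}\mathrm{G}_\ell$, it suffices to treat $\sigma = \sigma_\ell$ a generator of one $\mathrm{G}_\ell$ with $\ell\mid\mathfrak{b}$ (the general case following by a telescoping/multiplicativity argument on $D_{\mathfrak{b}} = \prod D_\ell$). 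For a single $\ell$, the standard identity $(\sigma_\ell - 1)D_\ell = M - N_\ell$ in $\mathbb{Z}[\mathrm{G}_\ell]$ gives
$$
x(\mathfrak{b})^{D_{\mathfrak{b}}(\sigma_\ell-1)} = x(\mathfrak{b})^{(D_{\mathfrak{b}}/D_\ell)(M - N_\ell)} = \bigl(x(\mathfrak{b})^{D_{\mathfrak{b}}/D_\ell}\bigr)^M \cdot x(\mathfrak{b})^{-(D_{\mathfrak{b}}/D_\ell)N_\ell},
$$
and by relation 2 the last factor equals $x(\mathfrak{b}/\ell)^{\pm(D_{\mathfrak{b}/\ell})(1-\mathrm{Fr}(\ell)^{-1})}$ up to signs; one then iterates down the divisors of $\mathfrak{b}$, at each stage peeling off an $M$-th power, until reaching $x(\mathcal{O}_k)^{(\text{integer})(1-\mathrm{Fr})}$-type terms which vanish modulo $X^M$ after the full telescoping — the bookkeeping is routine once the single-prime identity is in place.

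The main obstacle I anticipate is not any single deep point but rather getting the spanning/rank argument airtight: one must be careful that $\mathrm{G}_{\mathfrak{a}}$ acting on $x(\mathfrak{b})$ genuinely factors through $\mathrm{G}_{\mathfrak{b}}$ (this is relation 1, together with the identification of $\mathrm{G}_{\mathfrak{a}}$ with $\prod_{\ell\mid\mathfrak{a}}\mathrm{G}_\ell$ and of the inertia at $\ell$ with $\mathrm{G}_\ell$, as set up just before the lemma), and that the reduction via relation 2 exactly consumes the overlaps counted by inclusion–exclusion, so that the generating set $B$ has size exactly $M^{\omega(\mathfrak{a})}$ with no room to spare. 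The surjection onto $\mathbb{Z}[\mathrm{G}_{\mathfrak{a}}]$ is what pins the rank from below and makes the dimension count conclusive; constructing it and verifying it annihilates $Z$ is the one place where a genuine (if short) computation is unavoidable.
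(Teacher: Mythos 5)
The paper gives no proof of this lemma; it simply cites Rubin's Lemma~1.1 of \cite{Ru91}, which is exactly what you are reconstructing. Your spanning argument (use relation~1 to reduce to $\sigma\in\mathrm{G}_{\mathfrak{b}}$, then use relation~2 together with $1=N_\ell-\sum_{j\geq 1}\sigma_\ell^j$ to strip away any $\ell$ at which $\sigma$ is trivial, descending in $\omega(\mathfrak{b})$), the count $\#B=\sum_{\mathfrak{b}\mid\mathfrak{a}}(M-1)^{\omega(\mathfrak{b})}=M^{\omega(\mathfrak{a})}$, and the telescoping argument for the last assertion via $(\sigma_\ell-1)D_\ell=M-N_\ell$ and $D_{\mathfrak{b}}=D_{\mathfrak{b}/\ell}D_\ell$ are all sound.

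The one genuine gap is the lower bound on the $\mathbb{Z}$-rank, where you propose the map $x(\mathfrak{b})\mapsto\prod_{\ell\mid(\mathfrak{a}/\mathfrak{b})}(1-\mathrm{Fr}(\ell)^{-1})$. This map does \emph{not} annihilate relation~2. Writing $\pi(\mathfrak{b})$ for the image, one has $\pi(\mathfrak{b})=(1-\mathrm{Fr}(\ell)^{-1})\pi(\mathfrak{b}\ell)$, so the relator $x(\mathfrak{b}\ell)^{N_\ell}x(\mathfrak{b})^{-(1-\mathrm{Fr}(\ell)^{-1})}$ is sent to $\bigl(N_\ell-(1-\mathrm{Fr}(\ell)^{-1})^2\bigr)\pi(\mathfrak{b}\ell)$, which is not zero in $\mathbb{Z}[\mathrm{Gal}(K(\mathfrak{a})/k)]$ in general (e.g.\ $N_\ell$ has augmentation $M$ while $(1-\mathrm{Fr}(\ell)^{-1})^2$ has augmentation $0$). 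So this does not give the desired surjection, and without it linear independence is not established. You flagged the possible need for a ``variant'', but no choice of the form $x(\mathfrak{b})\mapsto c_{\mathfrak{b}}$ with $c_{\mathfrak{b}}\in\mathbb{Z}[\mathrm{Gal}(K(\mathfrak{a})/k)]$ will simultaneously kill both families of relations in the naive way, precisely because $N_\ell$ and $1-\mathrm{Fr}(\ell)^{-1}$ behave very differently. A clean fix is to compute $\dim_{\mathbb{Q}}(X\otimes\mathbb{Q})$ by decomposing into isotypic components for the characters $\chi$ of $\mathrm{G}_{\mathfrak{a}}$: for each $\chi$ with conductor $\mathfrak{f}_\chi$, relation~1 kills $x(\mathfrak{b})$ for $\mathfrak{f}_\chi\nmid\mathfrak{b}$, and relation~2 becomes $Mx(\mathfrak{b}\ell)=(1-\chi(\mathrm{Fr}(\ell))^{-1})x(\mathfrak{b})$ for $\ell\nmid\mathfrak{f}_\chi$ (and is vacuous otherwise), so the $\chi$-component is one-dimensional over $\mathbb{Q}(\chi)$, generated by $x(\mathfrak{f}_\chi)$; summing over $\chi$ gives $\dim_{\mathbb{Q}}(X\otimes\mathbb{Q})=|\mathrm{G}_{\mathfrak{a}}|=M^{\omega(\mathfrak{a})}$, which together with spanning forces $B$ to be a $\mathbb{Z}$-basis. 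With that replacement your proof is complete and follows the same route as Rubin's.
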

\begin{proof} The proof is exactly the same as for \cite[Lemma 1.1]{Ru91}.
\end{proof}
\begin{corollary}\label{kappaalpha} For any Euler system $\alpha:\mathcal{S}(\mathfrak{g})\longrightarrow k_\infty^\times$ there is a natural map
\begin{equation}\label{kappa}
\kappa_\alpha:\mathcal{S}(\mathfrak{g})\longrightarrow K^\times/(K^\times)^M,\quad\mathrm{such\ that}\quad \kappa_\alpha(\mathfrak{a})\equiv\alpha(\mathfrak{a})^{D_{\mathfrak{a}}}\ \mathrm{modulo}\ (K^\times)^M.   
\end{equation} 
\end{corollary}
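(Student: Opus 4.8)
The plan is to construct $\kappa_\alpha(\mathfrak{a})$ by descent from $K(\mathfrak{a})$ to $K$, following Rubin's argument in \cite{Ru91}. Fix $\mathfrak{a}\in\mathcal{S}(\mathfrak{g})$ and write $\beta:=\alpha(\mathfrak{a})^{D_{\mathfrak{a}}}\in K(\mathfrak{a})^\times$. The first step is to show that the class of $\beta$ in $K(\mathfrak{a})^\times/(K(\mathfrak{a})^\times)^M$ is fixed by $\mathrm{G}_{\mathfrak{a}}=\mathrm{Gal}(K(\mathfrak{a})/K)$. By the product decomposition $\mathrm{G}_{\mathfrak{a}}\simeq\prod_{\ell\mid\mathfrak{a}}\mathrm{G}_\ell$, it suffices to check invariance under each $\sigma_\ell-1$. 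Here the key identity is the telescoping relation $(\sigma_\ell-1)D_\ell=M-N_\ell$ in $\mathbb{Z}[\mathrm{G}_\ell]$, so $\beta^{\sigma_\ell-1}=\alpha(\mathfrak{a})^{D_{\mathfrak{a}/\ell}(M-N_\ell)}=\bigl(\alpha(\mathfrak{a})^{D_{\mathfrak{a}/\ell}}\bigr)^M\cdot\alpha(\mathfrak{a})^{-N_\ell D_{\mathfrak{a}/\ell}}$. The first factor is an $M$-th power, and the Euler system relation E3 gives $\alpha(\mathfrak{a})^{N_\ell}=N_{K(\mathfrak{a})/K(\mathfrak{a}/\ell)}(\alpha(\mathfrak{a}))=\alpha(\mathfrak{a}/\ell)^{1-\mathrm{Fr}(\ell)^{-1}}$ lifted to $K(\mathfrak{a})$; since $\alpha(\mathfrak{a}/\ell)\in K(\mathfrak{a}/\ell)^\times$ is fixed by $\mathrm{G}_\ell$ (it lives in the subfield $K(\mathfrak{a}/\ell)$) and $D_{\mathfrak{a}/\ell}$ involves only primes dividing $\mathfrak{a}/\ell$, the abstract distribution relation of Lemma \ref{dist} (applied to the Euler system values, exactly as Rubin does) shows $\alpha(\mathfrak{a})^{N_\ell D_{\mathfrak{a}/\ell}}$ is an $M$-th power in $K(\mathfrak{a})^\times$. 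Hence $\beta^{\sigma_\ell-1}\in(K(\mathfrak{a})^\times)^M$ for every $\ell\mid\mathfrak{a}$, so $\bar\beta$ is $\mathrm{G}_{\mathfrak{a}}$-invariant.

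The second step is to descend the invariant class to $K$. One has the inflation-restriction exact sequence, or more concretely: since $\mathrm{Gal}(K(\mathfrak{a})/K)$ has order $M^{\omega(\mathfrak{a})}$ (a power of $p$) and by hypothesis $p\nmid[K:k]$, I would instead use the following direct argument. Because $K(\mathfrak{a})/K$ is unramified outside the primes dividing $\mathfrak{a}$ and those primes split completely in $K/k$ — more to the point, because $K_M$ contains enough roots of unity — the module $K(\mathfrak{a})^\times/(K(\mathfrak{a})^\times)^M$ has trivial $\mathrm{G}_{\mathfrak{a}}$-cohomology in the relevant degree, or one appeals to the fact that $H^1(\mathrm{G}_{\mathfrak{a}},\mu_M)$ and the obstruction group vanish after noting $\mu_M\subset K_M$ acts suitably. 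The cleanest route, which is the one Rubin takes, is: the natural map $K^\times/(K^\times)^M\to\bigl(K(\mathfrak{a})^\times/(K(\mathfrak{a})^\times)^M\bigr)^{\mathrm{G}_{\mathfrak{a}}}$ has cokernel killed by a cohomology group that vanishes here, so there is a well-defined element $\kappa_\alpha(\mathfrak{a})\in K^\times/(K^\times)^M$ mapping to $\bar\beta$. I would spell this out by choosing, for each $\ell\mid\mathfrak{a}$, an element $w_\ell\in K(\mathfrak{a})^\times$ with $\beta^{\sigma_\ell-1}=w_\ell^M$ and then adjusting $\beta$ by a product of norms to land in $K^\times$; the congruence $\kappa_\alpha(\mathfrak{a})\equiv\alpha(\mathfrak{a})^{D_{\mathfrak{a}}}$ modulo $(K^\times)^M$ is automatic from the construction since the correction terms are $M$-th powers in $K(\mathfrak{a})^\times$ whose images in $K^\times/(K^\times)^M$ are computed via the norm.

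Naturality in $\mathfrak{a}$ — compatibility with the maps $\mathcal{S}(\mathfrak{g})\to\mathcal{S}(\mathfrak{g})$ given by multiplying or dividing by a prime $\ell$ — follows from the Euler system axioms E3 and E4 together with the relations defining $Z$ in Lemma \ref{dist}; this is where the distribution relation "$x(\mathfrak{b})^{D_{\mathfrak{b}}(\sigma-1)}\in X^M$" is used as a black box, so no new computation is needed beyond invoking Lemma \ref{dist}. I expect the main obstacle to be the descent step: verifying that the relevant obstruction cohomology group vanishes requires using the precise definition of $K_M$ in (\ref{kaem}) (the two cases $\mu_p\subset k$ versus $\mu_p\not\subset k$) and the hypothesis on $p$, and it is the only place where the arithmetic of $K$ rather than pure formalism enters. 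Everything else is the formal manipulation of twisted group-ring elements $D_\ell, N_\ell$ and is identical to \cite[Corollary 1.2]{Ru91} (or the proof of Lemma \ref{dist}), so the proof will simply read: "This follows from Lemma \ref{dist} exactly as in \cite[\S1]{Ru91}."
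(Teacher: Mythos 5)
Your step 2 misidentifies both the argument and the required input. You claim the map $K^\times/(K^\times)^M\to\bigl(K(\mathfrak{a})^\times/(K(\mathfrak{a})^\times)^M\bigr)^{\mathrm{G}_{\mathfrak{a}}}$ is surjective because "the relevant obstruction cohomology group vanishes," and that this is what Rubin does; neither is correct. The cokernel of that map is controlled by $H^1\bigl(\mathrm{G}_{\mathfrak{a}},(K(\mathfrak{a})^\times)^M\bigr)$, which (after Hilbert 90) injects into $H^2\bigl(\mathrm{G}_{\mathfrak{a}},\mu_M\cap K(\mathfrak{a})^\times\bigr)$. When $\mu_p\subset K$, $\mathrm{G}_{\mathfrak{a}}$ is a nontrivial abelian $p$-group acting trivially on $\mu_M\cap K(\mathfrak{a})^\times\neq1$, that $H^2$ is nonzero, and there is no reason for surjectivity (nor for the injectivity you would implicitly need to make $\kappa_\alpha(\mathfrak{a})$ well-defined). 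Moreover, the definition of $K_M$ in (\ref{kaem}), which you flag as the likely main difficulty, plays no role whatever in this corollary: the construction of $\kappa_\alpha$ is unconditional, and $K_M$ only enters later, in Lemmas \ref{fund1} and \ref{capital} and in Theorem \ref{Chebotarev}.

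What \cite[Proposition 2.2]{Ru91} actually does, and what the paper invokes, does not attempt to prove surjectivity of the descent map; it shows directly that the particular class of $\alpha(\mathfrak{a})^{D_{\mathfrak{a}}}$ descends. The essential content of Lemma \ref{dist} is not merely that $x(\mathfrak{a})^{D_{\mathfrak{a}}(\sigma-1)}\in X^M$, but that $X$ is a free (hence torsion-free) $\mathbb{Z}$-module, so the $y_\sigma\in X$ with $y_\sigma^M=x(\mathfrak{a})^{D_{\mathfrak{a}}(\sigma-1)}$ is \emph{unique}; uniqueness forces $\sigma\mapsto y_\sigma$ to satisfy the $1$-cocycle relation $y_{\sigma\tau}=y_\sigma^\tau y_\tau$. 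The axioms E1--E3 give a $\mathrm{G}_{\mathfrak{a}}$-equivariant map $X\to K(\mathfrak{a})^\times$, $x(\mathfrak{b})\mapsto\alpha(\mathfrak{b})$, which carries this to a $1$-cocycle in $K(\mathfrak{a})^\times$; by Hilbert's Theorem 90 it is a coboundary, say $\gamma^{\sigma-1}$. Then $\alpha(\mathfrak{a})^{D_{\mathfrak{a}}}\gamma^{-M}$ lies in $K^\times$, and $\kappa_\alpha(\mathfrak{a})$ is its class in $K^\times/(K^\times)^M$, well-defined because $\gamma$ is determined up to multiplication by an element of $K^\times$. Your suggestion "choose $w_\ell$ with $\beta^{\sigma_\ell-1}=w_\ell^M$ and adjust by a product of norms" gestures at Hilbert 90, but without the uniqueness of $M$-th roots in the torsion-free $X$ the chosen $w_\ell$ need not assemble into a cocycle, and the construction does not close up.
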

\begin{proof} The existence of $\kappa_\alpha$ is deduced from the above lemma \ref{dist} by applying the theorem 90 of Hilbert. We refer the reader to \cite[Proposition 2.2]{Ru91}.
\end{proof}
To go further we need to understand the prime factorization of $\kappa_\alpha(\mathfrak{a})$, for $\mathfrak{a}\in\mathcal{S}(\mathfrak{g})$. To this end we adopt the following notation of Rubin, cf.\cite[\S2]{Ru91}. Let $\mathcal{I}=\oplus_\lambda\mathbb{Z}\lambda$ be the group of fractional ideals of $\mathcal{O}_K$ written additively. If $\ell$ is a prime ideal of $\mathcal{O}_k$ then we write $\mathcal{I}_\ell=\oplus_{\lambda\vert\ell}\mathbb{Z}\lambda$. If $y\in K^\times$ then we denote by $(y)_\ell\in\mathcal{I}_\ell$, $[y]\in\mathcal{I}/M\mathcal{I}$ and $[y]_\ell\in\mathcal{I}_\ell/M\mathcal{I}_\ell$ the projections of the fractional ideal $(y):=y\mathcal{O}_K$.
\begin{proposition}\label{fielle1} Let $\ell\in\mathcal{L}$ and let us consider the map
\begin{equation}\label{requin}
\psi_\ell:K(\ell)^\times\longrightarrow(\mathcal{O}_K/\ell\mathcal{O}_K)^\times/((\mathcal{O}_K/\ell\mathcal{O}_K)^\times)^M,   
\end{equation}
which associates to $z$ the sum $\oplus_{\lambda\vert\ell}z_\lambda$ such that the image of $z^{1-\sigma_\ell}$ in $(\mathcal{O}_K/\lambda)^\times$ is equal to $(z_\lambda)^d$, where $d:=(N(\ell)-1)/M$. Then there exists a unique $\mathrm{G}$-equivariant isomorphism
\begin{equation}\label{fielle}
\varphi_\ell:(\mathcal{O}_K/\ell\mathcal{O}_K)^\times/((\mathcal{O}_K/\ell\mathcal{O}_K)^\times)^M\longrightarrow\mathcal{I}_\ell/M\mathcal{I}_\ell,   
\end{equation}
such that
\begin{equation}\label{baleine}
(\varphi_\ell\circ\psi_\ell)(x)=[N_{K(\ell)/K}(x)]_\ell.   
\end{equation}
\end{proposition}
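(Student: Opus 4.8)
The plan is to construct $\varphi_\ell$ directly from the decomposition theory of $\ell$ in $K$, and then verify the compatibility relation (\ref{baleine}) on the level of a single prime $\lambda$ above $\ell$, using $\mathrm{G}$-equivariance to propagate it. First I would fix a prime $\lambda$ of $\mathcal{O}_K$ above $\ell$; since $\ell$ splits completely in $K_M$, hence in $K(\mu_M)$ or in $K((\mathbb{F}_q^\times)^{1/M})$, the residue field $\mathcal{O}_K/\lambda$ has order $N(\ell)$ and its multiplicative group is cyclic of order $N(\ell)-1$, divisible by $M$ (this is exactly the divisibility $M\mid(N(\ell)-1)/w_k$ noted in the proof of Lemma \ref{extension}). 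Therefore $(\mathcal{O}_K/\lambda)^\times/((\mathcal{O}_K/\lambda)^\times)^M$ is cyclic of order exactly $M$, and the same holds componentwise for $(\mathcal{O}_K/\ell\mathcal{O}_K)^\times=\prod_{\lambda\vert\ell}(\mathcal{O}_K/\lambda)^\times$. Likewise $\mathcal{I}_\ell/M\mathcal{I}_\ell=\bigoplus_{\lambda\vert\ell}(\mathbb{Z}/M\mathbb{Z})\lambda$ is a free $\mathbb{Z}/M\mathbb{Z}$-module on the primes above $\ell$, and $\mathrm{G}$ permutes both sides in the same way (transitively on the primes above $\ell$). So an abstract $\mathrm{G}$-equivariant isomorphism exists; the content is to pin down the \emph{unique} one making (\ref{baleine}) hold.

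**Constructing $\varphi_\ell$ and proving uniqueness.** The key idea is that the composite $\varphi_\ell\circ\psi_\ell$ is forced: I would first show that $\psi_\ell$ is surjective. Given $z\in K(\ell)^\times$, write $z_\lambda\in(\mathcal{O}_K/\lambda)^\times/(\text{$M$-th powers})$ for the component defined by $z^{1-\sigma_\ell}\equiv z_\lambda^{d}\pmod{\lambda}$; this makes sense because $\mathrm{Gal}(K(\ell)/K)$ is cyclic of order $M$, $K(\ell)/K$ is totally ramified at the prime of $K(\ell)$ over $\lambda$, so the residue field at that prime is still $\mathcal{O}_K/\lambda$, and raising to the $d$-th power is an isomorphism $(\mathcal{O}_K/\lambda)^\times/(\text{$M$-th powers})\to\mu_M((\mathcal{O}_K/\lambda)^\times)$, the subgroup of $M$-th roots of unity, which is exactly where $z^{1-\sigma_\ell}$ lands by Hilbert 90 / Kummer theory for the totally ramified cyclic extension $K(\ell)_\lambda/K_\lambda$. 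Surjectivity of $\psi_\ell$ follows because for a suitable local uniformizer or unit $z$ at the prime of $K(\ell)$ above $\lambda$ one realizes any prescribed value. Then I would define $\varphi_\ell$ on the image by declaring $\varphi_\ell(\psi_\ell(z)):=[N_{K(\ell)/K}(z)]_\ell$ and check this is well-defined: if $\psi_\ell(z)=1$ then $z^{1-\sigma_\ell}$ is an $M$-th power in every $(\mathcal{O}_K/\lambda)^\times$, which by the Kummer description forces $z$ itself to be, modulo units that are norms, an element whose $K(\ell)/K$-norm has valuation divisible by $M$ at every $\lambda\mid\ell$ and trivial valuation elsewhere among ramified primes — using E-type properties of the extension (unramified outside $\ell$, totally ramified at $\ell$) one gets $[N_{K(\ell)/K}(z)]_\ell=0$. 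Uniqueness of $\varphi_\ell$ is then automatic: (\ref{baleine}) determines $\varphi_\ell$ on $\mathrm{image}(\psi_\ell)$, which is all of the source by surjectivity.

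**Equivariance and bijectivity.** Next I would verify $\mathrm{G}$-equivariance: for $g\in\mathrm{G}$, lift $g$ to $\tilde g\in\mathrm{Gal}(K(\ell)/k)$; since $N_{K(\ell)/K}$ commutes with $\tilde g$ and the defining congruence for $\psi_\ell$ transforms correctly under $\tilde g$ (the choice of generator $\sigma_\ell$ being $\mathrm{G}$-stable up to the abelian structure), both $\psi_\ell$ and the norm map are $\mathrm{G}$-equivariant, hence so is $\varphi_\ell$. Finally, to see $\varphi_\ell$ is an isomorphism I would count orders: both sides have order $M^{g_\ell}$ where $g_\ell$ is the number of primes of $K$ above $\ell$, so it suffices to check injectivity (or surjectivity) on a single $\lambda$-component and transport by $\mathrm{G}$; injectivity on one component reduces to the statement that a unit $z$ at the prime above $\lambda$ with $N_{K(\ell)/K}(z)$ a local $M$-th power (times units) has $\psi_\ell(z)$ trivial, which again is the Kummer-theoretic content of the totally ramified cyclic local extension.

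**Main obstacle.** The delicate point is the well-definedness of $\varphi_\ell$ — equivalently, showing that $\psi_\ell(z)=1$ implies $[N_{K(\ell)/K}(z)]_\ell=0$ — because it couples the multiplicative congruence at $\ell$ with the ideal-theoretic behavior of the norm, and one must use precisely that $K(\ell)/K$ is unramified outside $\ell$ and totally (tamely) ramified at the primes above $\ell$, together with the order computation $M\mid N(\ell)-1$. Once the local Kummer theory at a prime of $K(\ell)$ over $\lambda$ is set up cleanly, the rest is bookkeeping with $\mathrm{G}$-equivariance; this is entirely parallel to \cite[Proposition 2.4]{Ru91}, so I would phrase the argument to mirror Rubin's and cite it for the routine verifications.
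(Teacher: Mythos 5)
Your proposal is correct and follows essentially the same route as the paper: both proofs ultimately rest on the single computation that $\psi_\ell$ and $z\mapsto[N_{K(\ell)/K}(z)]_\ell$ share the kernel $\{z: v_{\lambda'}(z)\equiv 0\bmod M\text{ for all }\lambda'\mid\ell\}$, which one gets by writing $z$ locally as a unit times a power of a uniformizer $\pi$ and observing that $\pi^{1-\sigma_\ell}$ has exact order $M$ modulo $\lambda'$ (tame, totally ramified, cyclic of degree $M$). Where the paper first constructs an explicit $\hat\varphi_\ell$ from the generator $x_\lambda$ and then observes that $-\hat\varphi_\ell$ satisfies (\ref{baleine}) and is forced by the kernel identity, you define $\varphi_\ell$ directly by the forcing formula $\varphi_\ell(\psi_\ell(z)):=[N_{K(\ell)/K}(z)]_\ell$ and use the same kernel identity for well-definedness and uniqueness -- this is a repackaging, not a different argument. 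Two minor points you could tighten: your phrase ``for a suitable local uniformizer or unit $z$'' is misleading since units contribute nothing to $\psi_\ell$ (only the valuation mod $M$ matters), and the well-definedness step is simply the equality $v_\lambda(N_{K(\ell)/K}(z))=v_{\lambda'}(z)$ for a totally ramified prime, so the detour through ``modulo units that are norms'' and behavior ``elsewhere among ramified primes'' is unnecessary.
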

\begin{proof} We first prove the existence of $\varphi_\ell$. Let $\lambda'$ be a prime ideal of $\mathcal{O}_{K(\ell)}$ above $\ell$. Let $v_{\lambda'}$ be the normalized valuation of $K(\ell)$ defined by $\lambda'$, and let $\pi\in\lambda'-(\lambda')^2$. Then $\pi^{1-\sigma_\ell}$ has exact order $M$ in the cyclic group $(\mathcal{O}_{K(\ell)}/\lambda')^\times$, because $K(\ell)/K$ is cyclic, totally ramified at $\lambda:=\lambda'\cap\mathcal{O}_K$. In particular, using the isomorphism $\mathcal{O}_{K(\ell)}/\lambda'\simeq\mathcal{O}_K/\lambda$, there exists $x_\lambda\in(\mathcal{O}_K/\lambda)^\times$ such that the image of $\pi^{1-\sigma_\ell}$ in $(\mathcal{O}_K/\lambda)^\times$ is equal to $(x_\lambda)^d$. Let us remark that the projection of $x_\lambda$ in $(\mathcal{O}_K/\lambda)^\times/((\mathcal{O}_K/\lambda)^\times)^M$ is well defined, does not depend on $\pi$ and, in fact, has exact order $M$. Thus, the isomorphism $\mathcal{O}_K/\ell\mathcal{O}_K\simeq\oplus_{\lambda\vert\ell}\mathcal{O}_K/\lambda$ allows us to define a $\mathrm{G}$-equivariant isomorphism
\begin{equation*}
\hat{\varphi}_\ell:(\mathcal{O}_K/\ell\mathcal{O}_K)^\times/((\mathcal{O}_K/\ell\mathcal{O}_K)^\times)^M\longrightarrow\mathcal{I}_\ell/M\mathcal{I}_\ell,   
\end{equation*}
such that the image of an element $x:=\oplus_{\lambda\vert\ell}(x_\lambda)^{e_\lambda}$ is $\hat{\varphi}_\ell(x):=\oplus_{\lambda\vert\ell}e_\lambda\lambda$.
It is clear that the map $\varphi_\ell:=-\hat{\varphi}_\ell$ satisfies (\ref{baleine}). The unicity of $\varphi_\ell$ follows from the fact that $\psi_\ell$ and the map $K(\ell)^\times\longrightarrow\mathcal{I}_\ell/M\mathcal{I}_\ell$, $x\longmapsto[N_{K(\ell)/K}(x)]_\ell$ have the same kernel, that is the set of elements $x\in K(\ell)^\times$ such that $v_{\lambda'}(x)\equiv0$ modulo $M$, for all prime ideal $\lambda'$ above $\ell$.
\end{proof}
The map $\varphi_\ell$ induces a homomorphism $\{y\in K^\times/(K^\times)^M,\ [y]_\ell=0\}\longrightarrow\mathcal{I}_\ell/M\mathcal{I}_\ell$ which we also denote by $\varphi_\ell$. 
\begin{lemma} For any Euler system $\alpha:\mathcal{S}(\mathfrak{g})\longrightarrow k_\infty^\times$, and any $\mathfrak{a}\in\mathcal{S}(\mathfrak{g})$, such that $\mathfrak{a}\neq1$
\begin{equation}\label{marrakech}
[\kappa_\alpha(\mathfrak{a})]_\ell=
\begin{cases}
0&\mathrm{if}\ \ell\nmid\mathfrak{a}\\
\varphi_\ell(\kappa_\alpha(\mathfrak{a}/\ell))&\mathrm{if}\ \ell\vert\mathfrak{a}.
\end{cases}
\end{equation}
\end{lemma}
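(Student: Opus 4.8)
The plan is to analyze the $\ell$-adic valuation behavior of $\kappa_\alpha(\mathfrak{a})$ by exploiting the defining congruence $\kappa_\alpha(\mathfrak{a})\equiv\alpha(\mathfrak{a})^{D_{\mathfrak{a}}}$ modulo $(K^\times)^M$ together with the Euler system relations E2, E3 and E4, exactly in the spirit of \cite[Proposition 2.4]{Ru91}. The statement has two cases, and I would dispose of the first immediately: if $\ell\nmid\mathfrak{a}$, then writing $\mathfrak{a}=\ell_1\cdots\ell_n$ with all $\ell_i\neq\ell$, the extension $K(\mathfrak{a})/K$ is unramified at every prime above $\ell$, and since $\mathfrak{a}\neq\mathcal{O}_k$ property E2 says $\alpha(\mathfrak{a})\in\mathcal{O}_{K(\mathfrak{a})}^\times$; hence $\alpha(\mathfrak{a})$, and therefore any representative of $\alpha(\mathfrak{a})^{D_{\mathfrak{a}}}$, is a unit at the primes above $\ell$. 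Thus $[\kappa_\alpha(\mathfrak{a})]_\ell=0$.

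For the second case, $\ell\,\vert\,\mathfrak{a}$, I would write $\mathfrak{a}=\mathfrak{b}\ell$ with $\mathfrak{b}=\mathfrak{a}/\ell$ and $\ell\nmid\mathfrak{b}$. The key point is to compute the image of $\alpha(\mathfrak{a})$ in $K(\mathfrak{a})^\times$ locally at a prime $\lambda'$ of $K(\mathfrak{a})$ above $\ell$. Here one uses that $K(\mathfrak{a})/K(\mathfrak{b})$ is totally ramified at $\ell$ with Galois group identified with $\mathrm{G}_\ell$, while $\alpha(\mathfrak{a})$ is a unit there by E2; the congruence E4, namely $\alpha(\mathfrak{a})\equiv\alpha(\mathfrak{b})^{\mathrm{Fr}(\ell)^{-1}(N(\ell)-1)/M}$ modulo primes above $\ell$, pins down the reduction of $\alpha(\mathfrak{a})$ modulo $\lambda'$ in terms of $\alpha(\mathfrak{b})$. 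Applying the Kummer-style operator $D_{\mathfrak{a}}=D_{\mathfrak{b}}D_\ell$ and reducing, the $D_\ell$-part together with the telescoping identity $(\sigma_\ell-1)D_\ell = N_\ell - M$ converts multiplication by $D_\ell$ into the valuation-recording datum that defines $\psi_\ell$: concretely, $\kappa_\alpha(\mathfrak{a})$ reduces modulo $\lambda'$ to (a unit times) $\psi_\ell$ applied to $\kappa_\alpha(\mathfrak{b})^{\text{something}}$, so that $[\kappa_\alpha(\mathfrak{a})]_\ell=\varphi_\ell(\kappa_\alpha(\mathfrak{b}))$ after feeding this through Proposition \ref{fielle1}, i.e. through the identity $(\varphi_\ell\circ\psi_\ell)(x)=[N_{K(\ell)/K}(x)]_\ell$ and the norm compatibility $N_{K(\mathfrak{a})/K(\mathfrak{b})}\alpha(\mathfrak{a})=\alpha(\mathfrak{b})^{1-\mathrm{Fr}(\ell)^{-1}}$ from E3.

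The main obstacle, and the step requiring the most care, is the bookkeeping that links the three distinct objects: the Kummer cocycle definition of $\kappa_\alpha$ via Lemma \ref{dist} and Corollary \ref{kappaalpha} (living in $K^\times/(K^\times)^M$), the local ramification computation in $K(\mathfrak{a})_{\lambda'}$, and the map $\psi_\ell$ which is defined on $K(\ell)^\times$ rather than on $K(\mathfrak{a})^\times$. One must check that the construction is compatible with replacing $\mathfrak{a}$ by $\mathfrak{b}$ — that is, that the relevant reductions factor through $K(\ell)$ — which is where the structure $\mathrm{G}_{\mathfrak{a}}\simeq\prod_{\ell\,\vert\,\mathfrak{a}}\mathrm{G}_\ell$ and the fact that $\ell$ is totally ramified only in the $K(\ell)$-direction are used. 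The signs and the exponent $d=(N(\ell)-1)/M$ must be tracked exactly as in the definition of $\psi_\ell$, and the operator identity $(\sigma_\ell-1)D_\ell\equiv N_\ell\pmod{M}$ on $X$ (from Lemma \ref{dist}) is what makes the $D_\ell$-twist land in the right place. Once these identifications are set up correctly, the equality $[\kappa_\alpha(\mathfrak{a})]_\ell=\varphi_\ell(\kappa_\alpha(\mathfrak{a}/\ell))$ follows formally, and the argument is word-for-word parallel to Rubin's \cite[Proposition 2.4]{Ru91}.
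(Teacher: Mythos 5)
Your proposal is correct and takes the same route as the paper, which simply cites Rubin's Proposition~2.4 and asserts the proof carries over verbatim; what you have written is an expanded account of exactly that argument (unit behavior from E2 plus unramifiedness when $\ell\nmid\mathfrak{a}$; the local computation at $\ell$ via E3, E4, the operator identity for $D_\ell$, and Proposition~\ref{fielle1} when $\ell\mid\mathfrak{a}$). One small sign slip: with $D_\ell=\sum_{i=0}^{M-1}i\sigma_\ell^i$ one has $(\sigma_\ell-1)D_\ell=M-N_\ell$, not $N_\ell-M$, but this does not affect the structure of the argument.
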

\begin{proof} The proof is exactly the same as in \cite[Proposition 2.4]{Ru91}.
\end{proof}
In the sequel, if $p$ is a prime number such that $p\nmid[K:k]$, $\chi$ a nontrivial irreducible $\mathbb{Z}_p$-character of $\mathrm{G}$, and $\Pi$ is a $\mathbb{Z}_p[\mathrm{G}]$-module then we define $\Pi_\chi:=e_\chi\Pi$. If $\Pi$ is a $\mathbb{Z}[\mathrm{G}]$-module then we define $\Pi_\chi:=e_\chi(\mathbb{Z}_p\otimes_{\mathbb{Z}}\Pi)$. Before proving Theorem \ref{tresgras} we first need to prove the analoguous of \cite[Theorem 4]{Ru94} and \cite[Theorem 3.1]{Ru91}. For this we set
\begin{equation*}
K':=K_M(a_1^{1/M},\ldots,a_s^{1/M}).
\end{equation*}
\begin{lemma}\label{fund1} Let $p$ be a prime number such that $p\nmid\rho[K:k]$ and let $M$ be a power of $p$. Then 
the natural map
\begin{equation*}
f:K^\times/(K^\times)^M\longrightarrow K_M^\times/(K_M^\times)^M
\end{equation*}
is injective if $\mu_p\not\subset k$. But if $\mu_p\subset k$ its kernel is contained in $\mathbb{F}_{q}^{\times}/(\mathbb{F}_{q}^{\times})^M$. In particular, it is annihilated by $[K:k]-s(\mathrm{G})$, where $s(\mathrm{G}):=\sum\sigma, \sigma\in\mathrm{G}$. Furthermore, the kernel of the natural map
\begin{equation*}
g:K^\times/(K^\times)^M\longrightarrow K'^\times/(K'^\times)^M
\end{equation*}
is also annihilated by $[K:k]-s(\mathrm{G})$.
\end{lemma}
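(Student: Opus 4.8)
The plan is to treat the two maps $f$ and $g$ by Kummer-theoretic descent, following the pattern of Rubin's arguments in \cite{Ru91} and \cite{Ru94}. First I would dispose of $f$. The extension $K_M/K$ is abelian: if $\mu_p\not\subset k$ then $K_M=K(\mu_M)$ is contained in a cyclotomic tower and $\mathrm{Gal}(K_M/K)$ is a quotient of $(\mathbb{Z}/M)^\times$, of order prime to $p$ since $p\nmid[K:k]$ forces the relevant ramification to be tame; more precisely $[K_M:K]$ divides $[k(\mu_M):k]$ which is prime to $p$ because $\mu_p\not\subset k$. Since $M$ is a power of $p$, raising to the power $[K_M:K]$ is an automorphism of any $M$-torsion group, so the restriction-corestriction (norm) argument shows $\ker f$ is killed by an integer prime to $p$ and also by $M$, hence is trivial. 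When $\mu_p\subset k$ we have $K_M=K((\mathbb{F}_q^\times)^{1/M})$, a Kummer extension of exponent dividing $M$ with $\mathrm{Gal}(K_M/K)\hookrightarrow \mathbb{F}_q^\times/(\mathbb{F}_q^\times)^M$; here an element $x\in K^\times$ becomes an $M$-th power in $K_M$ exactly when $K(x^{1/M})\subset K_M$, and by Kummer theory this means the class of $x$ in $K^\times/(K^\times)^M$ lies in the subgroup generated by the classes of the constants, i.e. in the image of $\mathbb{F}_q^\times/(\mathbb{F}_q^\times)^M$. That subgroup is killed by $[K:k]-s(\mathrm{G})$: the norm $s(\mathrm{G})$ acts on a constant $c\in\mathbb{F}_q^\times$ as $c^{[K:k]}$ (constants are fixed by $\mathrm{G}$), so $c^{[K:k]-s(\mathrm{G})}=c^{[K:k]}\cdot c^{-[K:k]}=1$ — wait, more carefully, $s(\mathrm{G})$ acting on $K^\times$ sends $x$ to $\prod_{\sigma}x^\sigma=N_{K/k}(x)$, and on a constant this is $c^{[K:k]}$, so $([K:k]-s(\mathrm{G}))$ annihilates the classes of constants in $K^\times/(K^\times)^M$. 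This gives the stated conclusion for $f$.

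Next I would handle $g$, writing $K'=K_M(a_1^{1/M},\dots,a_s^{1/M})$ and factoring $g$ as the composite of $f$ with the natural map $h:K_M^\times/(K_M^\times)^M\to K'^\times/(K'^\times)^M$. Since we already control $\ker f$ (it is either trivial or killed by $[K:k]-s(\mathrm{G})$), it suffices to control $\ker h$ on the image of $f$, and in fact to show $\ker h$ itself is annihilated by an integer whose effect, pulled back through $f$, is covered by $[K:k]-s(\mathrm{G})$. The extension $K'/K_M$ is a Kummer extension of exponent dividing $M$, with Galois group $G':=\mathrm{Gal}(K'/K_M)$ embedded via Kummer pairing into $\mathrm{Hom}(G',\mu_M)$, generated by (the classes of) $a_1,\dots,a_s$. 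An element $y\in K_M^\times$ dies in $K'^\times/(K'^\times)^M$ iff $K_M(y^{1/M})\subset K'$, i.e. iff the class of $y$ in $K_M^\times/(K_M^\times)^M$ lies in the subgroup $B$ generated by the classes of $a_1,\dots,a_s$ (using that the $a_i$ are global, indeed lie in $\mathcal{O}_k$). Now the key point: the $a_i$ were chosen so that $\mathfrak{a}_i^{n_i}=a_i\mathcal{O}_k$ with the $\bar{\mathfrak{a}}_i$ generating the $p$-part of $\mathbf{Pic}(\mathcal{O}_k)$, and in particular each $a_i\in k^\times\subset K^\times$ is $\mathrm{G}$-fixed, so again $s(\mathrm{G})$ acts on the class of $a_i$ as multiplication by $[K:k]$, whence $[K:k]-s(\mathrm{G})$ annihilates $B\cap\mathrm{image}(f)$, and therefore $\ker g$.

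The main obstacle, and the step requiring genuine care rather than bookkeeping, is the Kummer-theoretic identification ``$y$ becomes an $M$-th power in $K'$ $\iff$ class of $y$ lies in the subgroup generated by the $a_i$ (resp. the constants)''. This rests on $K'/K_M$ (resp. $K_M/K$ when $\mu_p\subset k$) being an \emph{abelian} Kummer extension of the right exponent, which in turn requires $\mu_M\subset K_M$ — true by construction of $K_M$ — and requires knowing that no unexpected extra $M$-th roots appear, i.e. that the obvious map from the subgroup of $K_M^\times/(K_M^\times)^M$ generated by the $a_i$ onto $\mathrm{Hom}(\mathrm{Gal}(K'/K_M),\mu_M)$ has kernel only what Kummer theory predicts. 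I would phrase this via the exact sequence $1\to \langle a_1,\dots,a_s\rangle(K_M^\times)^M/(K_M^\times)^M\to K_M^\times/(K_M^\times)^M \to K'^\times/(K'^\times)^M$ coming from Kummer theory applied to $K'/K_M$, exactly as in \cite[Lemma 3]{Ru94} and \cite[proof of Theorem 3.1]{Ru91}, and then quote that the displayed subgroup is generated by $\mathrm{G}$-invariant elements, hence annihilated by $[K:k]-s(\mathrm{G})$. The hypothesis $p\nmid\rho[K:k]$ is used twice: $p\neq\rho$ to make sense of $\mu_M$ and the Kummer theory in characteristic $\rho$ (the $M$-th power map is separable), and $p\nmid[K:k]$ to make the tame part of $K_M/K$ (when $\mu_p\not\subset k$) of order prime to $p$ so that $f$ is outright injective in that case.
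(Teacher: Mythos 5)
Your treatment of the second map $g$ — factor through $f$, apply genuine Kummer theory to $K'/K_M$ (where $\mu_M\subset K_M$ does hold), and use that the $a_i$ lie in $k$ and hence are $\mathrm{G}$-fixed — matches the paper's argument. But your analysis of the first map $f$ has two real gaps, and neither tracks what the paper actually does.

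First, the injectivity of $f$ when $\mu_p\not\subset k$. You kill $\ker f$ by restriction--corestriction, asserting that $[K_M:K]$ divides $[k(\mu_M):k]$ which ``is prime to $p$ because $\mu_p\not\subset k$.'' That is false. The degree $[k(\mu_M):k]$ equals the order of $q$ in $(\mathbb{Z}/M\mathbb{Z})^\times$ (a constant field extension), and this order can carry $p$-power factors even when $p\nmid q-1$. Concretely, with $p=3$, $q=2$, $M=9$: the order of $2$ mod $9$ is $6=2\cdot 3$, so $[\mathbb{F}_2(\mu_9):\mathbb{F}_2]=6$ is divisible by $p=3$. Once $[K_M:K]$ has a $p$-factor, the norm argument only kills $\ker f$ by that $p$-factor, which gives no information since $\ker f$ is already $p$-power torsion. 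Second, when $\mu_p\subset k$ you argue by Kummer theory for $K_M/K$, but $K_M/K$ is a genuine Kummer extension of exponent $M$ only if $\mu_M\subset K$, i.e. $M\mid q^r-1$ ($\mathbb{F}_{q^r}$ the constants of $K$). The hypothesis $\mu_p\subset k$ gives only $p\mid q-1$, not $M\mid q^r-1$, so the Kummer correspondence you invoke for $K_M/K$ is not available in general.

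The paper's proof avoids both problems by a different route: it observes that $K_M/K$ is in every case a \emph{constant field} extension, hence cyclic with Galois group $\mathrm{Gal}(\mathbb{F}_{q^{r'}}/\mathbb{F}_{q^r})$, and applies Hilbert~90 to $\mathbb{F}_{q^{r'}}^\times$ to show that $\ker f$ equals $\bigl(\mathbb{F}_{q^r}^\times\cap(\mathbb{F}_{q^{r'}}^\times)^M\bigr)/(\mathbb{F}_{q^r}^\times)^M$; the two cases of the definition of $K_M$ are then settled by an explicit computation in the cyclic groups $\mathbb{F}_{q^r}^\times\subset\mathbb{F}_{q^{r'}}^\times$ (here one uses $p\nmid[K:k]$ to get $p\nmid r$ and hence $\mathbb{F}_{q^r}^\times/(\mathbb{F}_{q^r}^\times)^M\cong\mathbb{F}_q^\times/(\mathbb{F}_q^\times)^M$). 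This reduction to constants is the missing idea in your write-up; without it the case analysis for $f$ does not go through.
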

\begin{proof} Let $\mathbb{F}_{q^r}$ (resp. $\mathbb{F}_{q^{r'}}$) be the field of constants of $K$ (resp. $K_M$). Then $K_M=K\mathbb{F}_{q^{r'}}$ and $K_M$ is a cyclic extension of $K$, with Galois group canonically isomorphic to $\mathrm{Gal}(\mathbb{F}_{q^{r'}}/\mathbb{F}_{q^r})$. Since $H^1(\mathbb{F}_{q^{r'}}/\mathbb{F}_{q^r},\mathbb{F}_{q^{r'}}^\times)=0$ the kernel of $V$ is equal to 
\begin{equation*}
(\mathbb{F}_{q^r}^\times\cap(\mathbb{F}_{q^{r'}}^\times)^M)/(\mathbb{F}_{q^r}^\times)^M
\end{equation*}
If $\mu_p\not\subset k$ then $\mathbb{F}_{q^{r'}}=\mathbb{F}_{q^r}(\mu_M)$. In this case it is easy to check that $\mathbb{F}_{q^r}^\times\cap(\mathbb{F}_{q^{r'}}^\times)^M=(\mathbb{F}_{q^r}^\times)^M$. If $\mu_p\subset k$ then $\mathbb{F}_{q^{r'}}=\mathbb{F}_{q^r}((\mathbb{F}_q^\times)^{1/M})$. In particular $(\mathbb{F}_{q^{r'}}^\times)^M=\mathbb{F}_{q^r}^\times$. Moreover, since $p\nmid[K:k]$ the integer $r$ is prime to $p$. and hence $\mathbb{F}_{q^r}^\times/(\mathbb{F}_{q^r}^\times)^M$ is naturally isomorphic to $\mathbb{F}_q^\times/(\mathbb{F}_q^\times)^M$. This proves the assertions of the lemme about the kernel of $f$. Further, since $K'/K_M$ is a Kummer extension and $a_1,\ldots,a_s$ are elements of $k$, an elementary use of Kummer theory shows that the kernel of $g$ is also annihilated by $[K:k]-s(\mathrm{G})$. 
\end{proof}
\begin{lemma}\label{capital} Suppose $p$ is a prime number such that $p\nmid\rho[K:k]$ and let $M$ be a power of $p$. Let $\chi$ be a nontrivial irreducible $\mathbb{Z}_p$-character of $\mathrm{G}$. If $\mu_p\subset K$ and $p\,\vert[H:k]$, then we assume $\chi\neq\omega$. Let $\mathsf{H}^\chi$ be the abelian extension of $K$ corresponding to the $\chi$-part $\mathrm{Cl}(\mathcal{O}_K)_\chi$. Then $\mathsf{H}^\chi\cap K'=K$.
\end{lemma}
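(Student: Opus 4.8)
The plan is to analyze the field $\mathsf{H}^\chi\cap K'$ by decomposing $K'$ into its two "pieces": the constant-field/Kummer layer $K_M/K$ (together with the Kummer extension cut out by the $a_i$), which is ramified only at $\infty$ and the primes dividing the $a_i$, and to play this off against the fact that $\mathsf{H}^\chi/K$ is unramified everywhere. First I would recall that $\mathrm{Gal}(K'/K)$ is a $p$-group (it is killed by $M$, being a compositum of a cyclic constant extension of degree dividing $M$ with Kummer extensions of exponent dividing $M$), so $\mathsf{H}^\chi\cap K'$ is an abelian $p$-extension of $K$ inside $\mathsf{H}^\chi$; since $\mathrm{Gal}(\mathsf{H}^\chi/K)\cong\mathrm{Cl}(\mathcal{O}_K)_\chi$ is $\chi$-isotypic, any subextension is $\chi$-isotypic as well, and in particular $\mathsf{H}^\chi\cap K'$ is unramified at every finite prime of $K$ and at the primes above $\infty$ (recall $\infty$ splits completely in $K$, hence in $\mathcal{O}_K$ the "infinite" primes are unramified in any $\mathcal{O}_K$-ring-class field $\mathsf{H}^\chi$).

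**Next I would** set $N:=\mathsf{H}^\chi\cap K'$ and show $N=K$ by showing $\mathrm{Gal}(K'/N)=\mathrm{Gal}(K'/K)$, equivalently that the restriction map $\mathrm{Gal}(K'/K)\to\mathrm{Gal}(N/K)$ is zero, equivalently that $N$ is both unramified over $K$ and "constant-like". Concretely: $K'=K_M\cdot K(a_1^{1/M},\dots,a_s^{1/M})$. The subextension $K_M/K$ is generated by roots of unity or by $(\mathbb{F}_q^\times)^{1/M}$, hence is a constant field extension — unramified everywhere, but its Galois group over $K$ is canonically $\mathrm{Gal}(\mathbb{F}_{q^{r'}}/\mathbb{F}_{q^r})$, which is a \emph{quotient} of $\widehat{\mathbb{Z}}$ on which $\mathrm{G}$ acts \emph{trivially} when $\mu_p\subset k$ and via the Teichmüller character $\omega$ when $\mu_p\not\subset k$ (because then $K_M=K(\mu_M)$ and $\mathrm{Gal}(K(\mu_M)/K)$ sits inside $(\mathbb{Z}/M)^\times$ on which $\mathrm{G}$ acts by $\omega$). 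Either way, the $\chi$-component of $\mathrm{Gal}(K_M/K)$ vanishes: if $\mu_p\subset k$ the action is trivial so a nontrivial $\chi$ kills it, and if $\mu_p\not\subset k$ then $\mu_p\not\subset K$ (that is part of the hypothesis set-up) so $\chi\neq\omega$ and again the $\chi$-part is zero. Hence $\mathsf{H}^\chi\cap K_M=K$.

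**Then I would** handle the Kummer part. The primes ramifying in $K(a_i^{1/M})/K_M$ lie above the primes dividing $a_i\mathcal{O}_k$ (and possibly $\infty$); since $\mathsf{H}^\chi/K$ is unramified at all of these, $\mathsf{H}^\chi\cap K'$ must in fact be unramified over $K_M\cdot(\mathsf{H}^\chi\cap K_M\text{-free part})$ — more carefully, the inertia subgroups of $\mathrm{Gal}(K'/K)$ at the primes dividing $\prod a_i$ and at $\infty$ must all lie in $\mathrm{Gal}(K'/N)$, so $\mathrm{Gal}(K'/N)$ contains the subgroup generated by all these inertia groups together with $\mathrm{Gal}(K'/K_M)$'s ramified part. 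The point is that $\mathrm{Gal}(K'/K_M)$ is \emph{generated} by inertia at those primes: indeed $K'/K_M$ is cut out by the $a_i$, each $a_i\mathcal{O}_k$ being supported at finite primes outside $\mathcal{L}$-data, and by construction (the $\mathfrak{a}_i$ generate $\mathbf{Pic}_p(\mathcal{O}_k)$) the classes force these Kummer generators to be genuinely ramified. Combined with $\mathsf{H}^\chi\cap K_M=K$ from the previous step, this gives $\mathrm{Gal}(K'/N)=\mathrm{Gal}(K'/K)$, i.e. $N=K$. I expect the main obstacle to be the bookkeeping in this last step: one must argue cleanly that the inertia groups at the "bad" primes of $K'/K$ together with the constant part generate all of $\mathrm{Gal}(K'/K)$, so that an unramified, $\chi$-isotypic subextension with $\chi\neq\omega$ (resp. $\chi$ nontrivial) is forced to be trivial — this is exactly the analogue of Rubin's argument in \cite[Lemma 3.1]{Ru91} / \cite[Theorem 4]{Ru94}, and the delicate case is $\mu_p\subset K$ with $p\mid[H:k]$, which is precisely why $\chi=\omega$ is excluded.
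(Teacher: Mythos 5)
Your first step (showing $\mathsf{H}^\chi\cap K_M=K$) reaches the right conclusion but via a wrong intermediate claim: you assert that when $\mu_p\not\subset k$, $\mathrm{G}$ acts on $\mathrm{Gal}(K_M/K)=\mathrm{Gal}(K(\mu_M)/K)$ by $\omega$. That is false. Since $K_M/k$ is abelian (it is $K$ composed with a constant/Kummer extension of $k$), the conjugation action of $\mathrm{G}$ on $\mathrm{Gal}(K_M/K)$ is \emph{trivial} in all cases. The character $\omega$ describes the action on $\mu_p$, not on $\mathrm{Gal}(K(\mu_M)/K)$; you are conflating the two. The paper's one-line observation "$\mathrm{G}$ acts trivially on $\mathrm{Gal}(\mathsf{H}^\chi\cap K_M/K)$ because $K_M$ is abelian over $k$", combined with $\chi\neq1$, is the correct and complete argument here.

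The genuine gap is in your treatment of the Kummer layer $K'/K_M$. You assert that $\mathrm{Gal}(K'/K_M)$ is generated by inertia at the primes dividing $\prod a_i$ and at $\infty$, and that "the classes force these Kummer generators to be genuinely ramified". This is not justified and can fail: by construction $a_i\mathcal{O}_k=\mathfrak{a}_i^{n_i}$ with $n_i$ a power of $p$, so at any finite prime $\mathfrak{q}$ one has $v_\mathfrak{q}(a_i)\equiv 0\pmod{n_i}$. If $n_i\geq M$ (both are $p$-powers, and in the application $M$ can be taken large but $n_i$ is bounded by $\#\mathbf{Pic}_p(\mathcal{O}_k)$ — the relative size is not controlled a priori), then $v_\mathfrak{q}(a_i)\equiv0\pmod M$ at all finite primes, and $K_M(a_i^{1/M})/K_M$ is unramified there. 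So the inertia groups you want to rely on need not generate $\mathrm{Gal}(K'/K_M)$, and your argument that "an unramified subextension is forced to be trivial" does not go through. You also never explain where $\chi\neq\omega$ actually enters your ramification argument; you only invoke it in the last sentence.

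The paper avoids ramification entirely. Setting $E:=K_M(\mathsf{H}^\chi\cap K')\subset K'$, Kummer theory gives $E=K_M(V^{1/M})$ with $V\subset\langle a_1,\ldots,a_s\rangle\subset k^\times$. The decisive point is that \emph{both} $K_M$ and $\mathsf{H}^\chi$ are abelian over $K$, so $E/K$ is abelian; for $x\in V^{1/M}$, $\tau\in\mathrm{Gal}(E/K_M)$, and any $\sigma\in\mathrm{Gal}(E/K)$, commutativity forces $\sigma(\tau(x)/x)=\tau(x)/x$, so $\tau(x)/x\in\mu_M\cap K$. If $\mu_p\not\subset K$ this intersection is trivial and $E=K_M$. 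If $\mu_p\subset K$ and $p\mid[H:k]$, then $\mathrm{Gal}(E/K_M)\hookrightarrow\mathrm{Hom}(V,\mu_M\cap K)$ is acted on by $\mathrm{G}$ via $\omega$ (since $V\subset k^\times$ is $\mathrm{G}$-fixed), but it is also isomorphic to $\mathrm{Gal}(\mathsf{H}^\chi\cap K'/K)$, on which $\mathrm{G}$ acts via $\chi$; the hypothesis $\chi\neq\omega$ then forces this group to vanish. That is the step your proposal is missing.
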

\begin{proof} The group $\mathrm{G}$ acts trivially on $\mathrm{Gal}(\mathsf{H}^\chi\cap K_M/K)$ because $K_M$ is abelian over $k$. On the other hand, $\mathrm{Gal}(\mathsf{H}^\chi\cap K_M/K)$ is a $\mathrm{G}$-quotient of $\mathrm{Gal}(\mathsf{H}^\chi/K)\simeq\mathrm{Cl}(\mathcal{O}_K)_\chi$. This implies that $\mathsf{H}^\chi\cap K_M=K$ since $\chi\neq1$. In addition, if $p\nmid[H:k]$ then $K'=K_M$. In particular we have proved that $\mathsf{H}^\chi\cap K'=K$ in case $p\nmid[H:k]$. Let $E:=K_M(\mathsf{H}^\chi\cap K')$. By Kummer theory we deduce from the inclusion $E\subset K'$ that $E=K_M(V^{1/M})$, where $V$ is a subgroup of the multiplicative group $<a_1,\ldots,a_s>\subset k^\times$. But recall that $K_M$ and $\mathsf{H}^\chi$ are abelian over $K$. In particular, if $x\in V^{1/M}$ and $\tau\in\mathrm{Gal}(E/K_M)$ we have $\tau(x)/x\in\mu_M\cap K$. Thus, if $\mu_p\not\subset K$ then $E=K_M$ and $\mathsf{H}^\chi\cap K'=K$ because of the isomorphism $\mathrm{Gal}(E/K_M)\simeq\mathrm{Gal}(\mathsf{H}^\chi\cap K'/K)$. If $\mu_p\subset K$, $p\,\vert[H:k]$ then $\mathrm{G}$ acts on $\mathrm{Gal}(E/K_M)$ via $\omega$. This implies that $\mathrm{Gal}(E/K_M)=1$ because this group is isomorphic to $\mathrm{Gal}(\mathsf{H}^\chi\cap K'/K)$ on which $\mathrm{G}$ acts via $\chi\neq\omega$. The proof of the lemma is now complete.
\end{proof}
\begin{theorem}\label{Chebotarev} Suppose $p$ is a prime number such that $p\nmid\rho[K:k]$ and let $M$ be a power of $p$. Let $\chi$ be a nontrivial irreducible $\mathbb{Z}_p$-character of $\mathrm{G}$. If $\mu_p\subset K$ and $p\,\vert[H:k]$, then we assume $\chi\neq\omega$. Let $\beta\in (K^\times/(K^\times)^M)_\chi$ and $A$ be a $\mathbb{Z}_p[\mathrm{G}]$-quotient of $\mathrm{Cl}(\mathcal{O}_K)_\chi$. Let $m$ be the order of $\beta$ in $K^\times/(K^\times)^M$, $W$ the $\mathrm{G}$-submodule of $K^\times/(K^\times)^M$ generated by $\beta$, $\mathsf{H}$ the abelian extension of $K$ corresponding to $A$, and $L:=\mathsf{H}\cap K'(W^{1/M})$. Then, there is a $\mathbb{Z}[\mathrm{G}]$ generator $\mathfrak{c}'$ of $\mathrm{Gal}(L/K)$ such that for any $\mathfrak{c}\in A$ whose restriction to $L$ is $\mathfrak{c}'$, there are infinitely many prime ideals $\lambda$ of $\mathcal{O}_K$ such that:
\begin{description}
\item(i) the projection of the class of $\lambda$ in $A$ is $\mathfrak{c}$,
\item(ii) if $\ell:=\lambda\cap\mathcal{O}_k$ then $\ell\in\mathcal{L}$,
\item(iii) $[\beta]_\ell=0$ and there is $u\in(\mathbb{Z}/M\mathbb{Z}[\mathrm{G}])_\chi^\times$ such that $\varphi_\ell(\beta)=(M/m)u\lambda$.
\end{description} 
\end{theorem}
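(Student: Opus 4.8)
\section*{Proof proposal}

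The plan is to adapt Rubin's proof of the corresponding Chebotarev statement (\cite[Theorem 3.1]{Ru91}, \cite[Theorem 4]{Ru94}), feeding in the two disjointness lemmas \ref{fund1} and \ref{capital}. First I would pass from $K$ to $K'$. By Lemma \ref{fund1} the kernel of the natural map $K^\times/(K^\times)^M\longrightarrow K'^\times/(K'^\times)^M$ is annihilated by $[K:k]-s(\mathrm{G})$; since $\chi\neq1$ the element $s(\mathrm{G})=\sum_{\sigma\in\mathrm{G}}\sigma$ acts as $0$ on the $\chi$-part while $[K:k]$ acts as a $p$-adic unit, so $\beta$ survives with the same order $m$ in $K'^\times/(K'^\times)^M$. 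Set $N:=K'(W^{1/M})$. The two cases in the definition of $K_M$ are designed precisely so that $\mu_M\subset K'$, whence Kummer theory identifies $\mathrm{Gal}(N/K')$ with $\mathrm{Hom}(W,\mu_M)$. Because $p\nmid[K:k]$ and $\chi$ is irreducible, $\mathbb{Z}_p[\mathrm{G}]e_\chi$ is a discrete valuation ring, so finite $\mathbb{Z}_p[\mathrm{G}]$-modules in the $\chi$-part are direct sums of cyclic modules and, since $W$ is generated by $\beta$, both $W$ and $\mathrm{Hom}(W,\mu_M)$ are $\mathbb{Z}[\mathrm{G}]$-monogenic. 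I then fix a $\mathbb{Z}[\mathrm{G}]$-generator $d_0$ of $\mathrm{Gal}(N/K')$; it satisfies that $d_0$ acts on $\beta^{1/M}$ by a root of unity of exact order $m$, and the associated Kummer cocycle is a unit $u\in(\mathbb{Z}/M\mathbb{Z}[\mathrm{G}])_\chi^\times$.

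Next I would assemble $F:=\mathsf{H}\cdot N$, which is Galois over $k$ since $\mathsf{H}/k$, $K'/k$ and (as $W$ is $\mathrm{G}$-stable and $\mu_M\subset K'$) $N/k$ all are. Lemma \ref{capital} gives $\mathsf{H}^\chi\cap K'=K$, and since $\mathsf{H}\subseteq\mathsf{H}^\chi$ we get $\mathsf{H}\cap K'=K$; hence restriction yields $\mathrm{Gal}(\mathsf{H}K'/K')\cong\mathrm{Gal}(\mathsf{H}/K)\cong A$, and $\mathrm{Gal}(F/K')$ is the fibre product of $A$ and $\mathrm{Gal}(N/K')$ over $\mathrm{Gal}(LK'/K')\cong\mathrm{Gal}(L/K)$, where $L=\mathsf{H}\cap N$ is as in the statement. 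In particular $\mathrm{Gal}(L/K)$, being a quotient of the monogenic $\mathrm{Gal}(N/K')$, is $\mathbb{Z}[\mathrm{G}]$-monogenic, and I set $\mathfrak{c}':=d_0|_L$, which is a $\mathbb{Z}[\mathrm{G}]$-generator. Given any $\mathfrak{c}\in A$ with $\mathfrak{c}|_L=\mathfrak{c}'$, the pair $(\mathfrak{c},d_0)$ is compatible over $LK'$ and hence defines an element $\gamma\in\mathrm{Gal}(F/K')$; since $K'/k$ is Galois, $\mathrm{Gal}(F/K')\triangleleft\mathrm{Gal}(F/k)$, so the whole conjugacy class of $\gamma$ in $\mathrm{Gal}(F/k)$ restricts trivially to $K'$.

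Then I would apply the Chebotarev density theorem for the function field $k$ to the Galois extension $F/k$: after discarding the finitely many primes of $\mathcal{O}_k$ that ramify in $F$, that divide $\mathfrak{g}$, or that divide the ideal generated by a fixed representative $b\in K^\times$ of $\beta$, there remain infinitely many primes $\ell$ whose Frobenius in $\mathrm{Gal}(F/k)$ lies in the conjugacy class of $\gamma$. Each such $\ell$ splits completely in $K'/k$, so $\ell\in\mathcal{L}$, which is (ii). Choosing a prime $\lambda$ of $\mathcal{O}_K$ above $\ell$ and a conjugate of $\gamma$ so that $\mathrm{Frob}_\lambda|_{\mathsf{H}}=\mathfrak{c}$ makes the class of $\lambda$ in $A$ equal to $\mathfrak{c}$ through the Artin isomorphism, which is (i). Finally $\ell\nmid(b)$ gives $[\beta]_\ell=0$, while the fact that $\mathrm{Frob}_\lambda$ acts on $\beta^{1/M}$ as $d_0$, combined with the explicit description of $\psi_\ell$ and $\varphi_\ell$ in Proposition \ref{fielle1}, forces $\varphi_\ell(\beta)=(M/m)u\lambda$ with $u$ the unit from the first paragraph; the factor $M/m$ appears because $\beta$, having order $m$, is an $(M/m)$-th power in $K'^\times/(K'^\times)^M$ up to a root of unity. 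This is (iii).

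I expect the main obstacle to be the bookkeeping that makes $(\mathfrak{c},d_0)$ genuinely realizable as a Frobenius satisfying all three prescriptions at once, which amounts to verifying that $\mathsf{H}K'\cap N=LK'$ exactly (no extra glue), and this is precisely where Lemma \ref{capital} and the Kummer-theoretic control of Lemma \ref{fund1} are indispensable; a secondary point is tracking $\chi$-equivariance carefully enough to conclude that the resulting $u$ is a \emph{unit} in $(\mathbb{Z}/M\mathbb{Z}[\mathrm{G}])_\chi^\times$ rather than merely nonzero, and to handle the alternatives $\mu_p\subset k$ / $\mu_p\not\subset k$ and $p\mid[H:k]$ / $p\nmid[H:k]$ uniformly (the hypothesis $\chi\neq\omega$ when $\mu_p\subset K$ and $p\mid[H:k]$ being exactly what is needed in Lemma \ref{capital}).
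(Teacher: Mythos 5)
Your proposal is correct and follows essentially the same route as the paper's proof: inject into $K'$ via Lemma~\ref{fund1}, identify $\mathrm{Gal}(K'(W^{1/M})/K')$ with $\mathrm{Hom}(W,\mu_M)$ by Kummer theory, pick a $\mathbb{Z}[\mathrm{G}]$-generator, glue it to an element of $A$ over $L$ using Lemma~\ref{capital}, apply Chebotarev, and appeal to Rubin's local analysis at $\ell$ for assertion (iii). The one small variation---running Chebotarev on $F/k$ via the conjugacy class of $\gamma$ inside the normal subgroup $\mathrm{Gal}(F/K')$, rather than on $\mathsf{H}K'(W^{1/M})/K$ as the paper does---is harmless and if anything makes the deduction in (ii) that $\ell$ splits completely in $K'/k$ (not merely in $K'/K$) more transparent.
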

\begin{proof} We follow \cite[Theorem 3.1]{Ru91}. Since $W\subset (K^\times/(K^\times)^M)_\chi$ and $\chi\neq1$, we deduce from Lemma \ref{fund1} that the Galois group of the Kummer extension $K'(W^{1/M})/K'$ is isomorphic as a $\mathbb{Z}[\mathrm{Gal}(K_M/k)]$-module to $\mathrm{Hom}(W,\mu_M)$. But $W\simeq(\mathbb{Z}/m\mathbb{Z}[\mathrm{G}])_\chi$, which is a direct factor of $(\mathbb{Z}/m\mathbb{Z})[\mathrm{G}]$.
On the other hand, $\mathrm{Hom}((\mathbb{Z}/m\mathbb{Z})[\mathrm{G}],\mu_M)$ is $\mathbb{Z}[\mathrm{Gal}(K_M/k)]$-cyclic, generated for instance by the group homomorphism $\Psi:(\mathbb{Z}/m\mathbb{Z})[\mathrm{G}]\longrightarrow\mu_M$ defined by $\Psi(1)=\zeta$ and $\Psi(g)=1$, for $g\neq1$, where $\zeta\in\mu_M$ is a primitive $m$-th root of unity. Therefore, we can find $\tau\in\mathrm{Gal}(K'(W^{1/M})/K')$ which generates $\mathrm{Gal}(K'(W^{1/M})/K')$ over
$\mathbb{Z}[\mathrm{Gal}(K_M/k)]$. The restriction $\mathfrak{c}'$ of $\tau$ to $L$ is a  $\mathbb{Z}[\mathrm{G}]$ generator of $\mathrm{Gal}(L/K)\simeq\mathrm{Gal}(LK'/K')$ by Lemma \ref{capital}. Let $\mathfrak{c}\in\mathrm{Gal}(\mathsf{H}/K)=A$ be any extension of $\mathfrak{c}'$ to $\mathsf{H}$. Then one can find $\sigma\in\mathrm{Gal}(\mathsf{H}K'(W^{1/M})/K)$ such that
\begin{equation*}
\sigma_{\vert\mathsf{H}}=\mathfrak{c}\quad\mathrm{and}\quad\sigma_{\vert K'(W^{1/M})}=\tau.
\end{equation*}
By \cite[Theorem 12 of Chapter XIII, page 289]{Weil} there exist infinitely many primes $\lambda$ of $\mathcal{O}_K$ whose Frobenius in $\mathrm{Gal}(\mathsf{H}K'(W^{1/M})/K)$ is the congugacy class of $\sigma$, and such that $\ell:=\lambda\cap\mathcal{O}_k$ is unramified in $K'(W^{1/M})/k$. Now it is immediate that $(i)$ and $(ii)$ are satisfied. The rest of the proof is exactly the same as the proof of \cite[Theorem 3.1]{Ru91}.
\end{proof}
\begin{theorem}\label{casfacile} Suppose $p$ is a prime number such that $p\nmid\rho[K:k]$. Let $\chi$ be a nontrivial irreducible $\mathbb{Z}_p$-character of $\mathrm{G}$. If $\mu_p\in K$ and $p\,\vert[H:k]$, then we assume $\chi\neq\omega$. Then we have
\begin{equation}\label{division}
\#\mathrm{Cl}(\mathcal{O}_K)_\chi\,\vert\,\#(\mathcal{O}_K^\times/\mathcal{E}_K)_\chi.   
\end{equation}
\end{theorem}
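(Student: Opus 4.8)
The plan is to run the standard Euler-system descent of Thaine--Kolyvagin--Rubin, using the Euler system produced in Section 3 from Stark units, to bound $\#\mathrm{Cl}(\mathcal{O}_K)_\chi$ from above by $\#(\mathcal{O}_K^\times/\mathcal{E}_K)_\chi$. First I would fix $M=p^N$ a large power of $p$, big enough that $M$ kills $\mathrm{Cl}(\mathcal{O}_K)_\chi$ and also "overkills" the finite module $(\mathcal{O}_K^\times/\mathcal{E}_K)_\chi$; concretely, choose $M$ so that $p^N$ exceeds the exponent of $(\mathcal{O}_K^\times/\mathcal{E}_K)_\chi$ times $\#\mathrm{Cl}(\mathcal{O}_K)_\chi$. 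By Corollary \ref{sardine}, pick $u\in\mathcal{E}_K$ whose image generates a cyclic $\mathbb{Z}_p[\mathrm{G}]$-submodule of $(\mathcal{O}_K^\times)_\chi$ of index (within $e_\chi(\mathbb{Z}_p\otimes\mathcal{O}_K^\times)$) dividing a bounded power of $p$ controlled by $\#(\mathcal{O}_K^\times/\mathcal{E}_K)_\chi$ — this is where the hypothesis $\chi\notin\Xi_p$ and $\chi\ne\omega$ enters, guaranteeing (via Lemma \ref{fund1}) that passing to $K'$ does not lose information on the $\chi$-component beyond what the operator $[K:k]-s(\mathrm{G})$ (a $p$-unit on the nontrivial $\chi$-part) controls. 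Attach to $u$ its Euler system $\alpha:\mathcal{S}(\mathfrak{g})\to k_\infty^\times$ with $\alpha(1)=u$, and the derived classes $\kappa_\alpha(\mathfrak{a})\in K^\times/(K^\times)^M$ from Corollary \ref{kappaalpha}.

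Next I would carry out the inductive core: suppose $\mathrm{Cl}(\mathcal{O}_K)_\chi$ is not killed by the divisor of $\#(\mathcal{O}_K^\times/\mathcal{E}_K)_\chi$ we are aiming for; build a strictly increasing chain of ideals $1=\mathfrak{a}_0\mid\mathfrak{a}_1\mid\cdots$ in $\mathcal{S}(\mathfrak{g})$, each $\mathfrak{a}_{j+1}=\mathfrak{a}_j\ell_{j+1}$, by repeatedly invoking Theorem \ref{Chebotarev}. At each stage, apply Theorem \ref{Chebotarev} with $\beta=\kappa_\alpha(\mathfrak{a}_j)$ (of order $m_j$ in $K^\times/(K^\times)^M$) and $A$ the quotient of $\mathrm{Cl}(\mathcal{O}_K)_\chi$ cut down by the classes of $\lambda_1,\dots,\lambda_j$ already chosen: this yields a prime $\ell_{j+1}\in\mathcal{L}$ with $[\beta]_{\ell_{j+1}}=0$ and $\varphi_{\ell_{j+1}}(\beta)=(M/m_j)u_{j+1}\lambda_{j+1}$ for a unit $u_{j+1}\in(\mathbb{Z}/M\mathbb{Z}[\mathrm{G}])_\chi^\times$, and with the class of $\lambda_{j+1}$ mapping to a prescribed generator $\mathfrak{c}_{j+1}'$ of the relevant piece of the class group. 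Then by the factorization Lemma, equation (\ref{marrakech}), $[\kappa_\alpha(\mathfrak{a}_{j+1})]_{\ell_{j+1}}=\varphi_{\ell_{j+1}}(\kappa_\alpha(\mathfrak{a}_j))=(M/m_j)u_{j+1}\lambda_{j+1}$, while $[\kappa_\alpha(\mathfrak{a}_{j+1})]_{\ell_i}=0$ for $i\le j$; a valuation/linear-algebra argument over $\mathbb{Z}_p[\mathrm{G}]_\chi$ then shows the order $m_{j+1}$ of $\kappa_\alpha(\mathfrak{a}_{j+1})$ divides $m_j$ and, crucially, that the "defect" $M/m_{j+1}$ grows: the ideal class relation forces a nontrivial chunk of $A$ to be killed at each step, so after at most $\log_p\#\mathrm{Cl}(\mathcal{O}_K)_\chi$ steps the quotient $A$ becomes trivial.

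When $A$ has become trivial, one is left with a relation in $\mathcal{I}/M\mathcal{I}$ and, via the principal-ideal structure and Lemma \ref{fund1}, a clean statement: the annihilator of $\mathrm{Cl}(\mathcal{O}_K)_\chi$ in $\mathbb{Z}_p[\mathrm{G}]_\chi$ contains (up to $p$-units, and up to the bounded operator $[K:k]-s(\mathrm{G})$) the element measuring the index of the cyclic module generated by $u$ inside $(\mathcal{O}_K^\times)_\chi$, i.e. an element whose order divides $\#(\mathcal{O}_K^\times/\mathcal{E}_K)_\chi$. Since $\mathbb{Z}_p[\mathrm{G}]_\chi$ is (a product of) complete DVRs (local because $\chi$ is a $\mathbb{Z}_p$-character and $p\nmid[K:k]$), the order of $\mathrm{Cl}(\mathcal{O}_K)_\chi$ as a module equals the norm of a generator of its (principal) Fitting/annihilator ideal, so this divisibility of annihilators upgrades to the divisibility of cardinalities $\#\mathrm{Cl}(\mathcal{O}_K)_\chi\mid\#(\mathcal{O}_K^\times/\mathcal{E}_K)_\chi$ asserted in (\ref{division}). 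Letting $N\to\infty$ removes the dependence on $M$.

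The main obstacle, and the step I expect to require the most care, is the bookkeeping that converts the chain of ideal-theoretic relations into the precise cardinality divisibility — that is, controlling how the order $m_j$ of $\kappa_\alpha(\mathfrak{a}_j)$ drops relative to how much of $A$ is annihilated, uniformly over all components of the semisimple algebra $\mathbb{Z}_p[\mathrm{G}]_\chi$, and checking that the auxiliary losses (the kernel of $g:K^\times/(K^\times)^M\to K'^\times/(K'^\times)^M$, the passage $K\rightsquigarrow K_M$, the operator $[K:k]-s(\mathrm{G})$) are all invisible on the nontrivial $\chi$-part once $\chi\notin\Xi_p$. This is exactly the content that Rubin packages in \cite[Theorem 3.1 and its application]{Ru91} and \cite[Theorem 4]{Ru94}, and here it goes through verbatim because Theorem \ref{Chebotarev}, Lemma \ref{dist}, Corollary \ref{kappaalpha}, Proposition \ref{fielle1} and Lemma (\ref{marrakech}) are precise function-field analogues; so in practice the proof will consist of citing Rubin's descent argument and verifying that our hypotheses feed it correctly.
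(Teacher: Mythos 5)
Your plan matches the paper's proof essentially step for step: choose a generator $\xi$ of the $\chi$-part of units modulo roots of unity whose $t$-th power lies in $\mathcal{E}_K$ (so that $t$ measures $\#(\mathcal{O}_K^\times/\mathcal{E}_K)_\chi$), attach the Euler system of Corollary~\ref{sardine} with $\kappa(1)=\xi^t$, and run the inductive Chebotarev descent of Theorem~\ref{Chebotarev} and (\ref{marrakech}) to peel off $\mathrm{Cl}(\mathcal{O}_K)_\chi$ one cyclic chunk at a time, concluding $\#\mathrm{Cl}(\mathcal{O}_K)_\chi\mid\prod_j(t_{j-1}/t_j)^{\dim\chi}\mid t_0^{\dim\chi}=\#(\mathcal{O}_K^\times/\mathcal{E}_K)_\chi$. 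The one structural fact you elide (and which the paper makes explicit to justify your ``pick $u$ of controlled index'' step and the clean valuation bookkeeping) is that $R:=\mathbb{Z}_p[\mathrm{G}]_\chi$ is a single discrete valuation ring because $p\nmid[K:k]$ and $\chi$ is a $\mathbb{Z}_p$-irreducible character, $(\mathcal{O}_K^\times)_\chi/(\mu_K)_\chi$ is free of rank one over $R$, and hence $(\mathcal{O}_K^\times/\mathcal{E}_K)_\chi\simeq R/tR$ is cyclic; also the paper fixes $M=p\,\#(\mathcal{O}_K^\times/\mathcal{E}_K)_\chi\,\#\mathrm{Cl}(\mathcal{O}_K)_\chi$ once rather than letting $M\to\infty$, and the hypothesis $\chi\neq\omega$ is used in Lemma~\ref{capital} (feeding Theorem~\ref{Chebotarev}), not in Lemma~\ref{fund1}.
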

\begin{proof} We follow \cite[Theorem 3.2]{Ru91}. Let $\hat{\chi}$ be a $p$-adic irreducible character of $\mathrm{G}$ such that $\hat{\chi}\vert\chi$, and let $\hat{\chi}(\mathrm{G}):=\{\hat{\chi}(\sigma),\ \sigma\in\mathrm{G}\}$. Then, the ring $R:=\mathbb{Z}_p[\mathrm{G}]_\chi$ is isomorphic to $\mathbb{Z}_p[\hat{\chi}(\mathrm{G})]$, which is the ring of integers of the unramified extension $\mathbb{Q}_p[\hat{\chi}(\mathrm{G})]$ of $\mathbb{Q}_p$. Thus, $R$ is a discret valuation ring. Moreover, the $R$-torsion of any $R$-module is equal to its $\mathbb{Z}_p$-torsion. It is well known that $\mathcal{O}_K^\times/\mu_K$ is a free $\mathbb{Z}$-module of rank $[K:k]-1$. More precisely there exists an isomorphism of $\mathbb{Z}[\mathrm{G}]$-modules $\log:\mathcal{O}_K^\times/\mu_K\longrightarrow\Delta$, where $\Delta$ is a submodule of the augmentation ideal of $\mathbb{Z}[\mathrm{G}]$. Moreover, $\Delta$ is a free $\mathbb{Z}$-module of rank $[K:k]-1$. Hence, since $\chi\neq1$, the quotient $(\mathcal{O}_K^\times)_\chi/(\mu_K)_\chi$ is a free $R$-module of rank $1$. Let us define
\begin{equation*}
M:=p\#(\mathcal{O}_K^\times/\mathcal{E}_K)_\chi\#\mathrm{Cl}(\mathcal{O}_K)_\chi.
\end{equation*}
Let $\mu$, $U$ and $V$ be the images of $\mu_K$, $\mathcal{O}_K^\times$ and $\mathcal{E}_K$ in $K^\times/(K^\times)^M$ respectively. We deduce from above that, $U_\chi/\mu_\chi$ is a free $R/MR$-module of rank $1$. But since
\begin{equation}\label{tachfine}
U_\chi/V_\chi\simeq(\mathcal{O}_K^\times)_\chi/(\mathcal{E}_K)_\chi\simeq R/tR,
\end{equation}
for some divisor $t$ of $M$, there exists $\xi\in U_\chi$ giving an $R$-basis of $U_\chi/\mu_\chi$ and such that $\xi^t\in(\mathcal{E}_K)_\chi$. In particular $\xi$ has order $M$ in $K^\times/(K^\times)^M$. By Corollary \ref{sardine} there exist an ideal $\mathfrak{g}$ of $\mathcal{O}_k$ and an Euler system $\alpha:\mathcal{S}(\mathfrak{g})\longrightarrow k_\infty^\times$, such that the map $\kappa:=\kappa_\alpha$ defined by (\ref{kappa}) satisfies $\kappa(1)=\xi^t$. We define inductively classes $\mathfrak{c}_0,\ldots,\mathfrak{c}_i\in\mathrm{Cl}(\mathcal{O}_K)_\chi$, prime ideals $\lambda_1,\ldots,\lambda_i$ of $\mathcal{O}_K$, coprime with $\mathfrak{g}$, and ideals $\mathfrak{a}_0,\ldots,\mathfrak{a}_i$ of $\mathcal{O}_k$ such that $\mathfrak{c}_0=1$ and $\mathfrak{a}_0=1$. Let $i\geq0$, and suppose that $\mathfrak{c}_0,\ldots,\mathfrak{c}_i$ and $\lambda_1,\ldots,\lambda_i$ (if $i\geq1$) are defined. Then we set $\mathfrak{a}_i=\prod_{1\leq n\leq i}\ell_n$ (if $i\geq1$), where $\ell_n:=\lambda_n\cap\mathcal{O}_k$. Moreover,
\begin{itemize}
\item If $\mathrm{Cl}(\mathcal{O}_K)_\chi\neq<\mathfrak{c}_0,\ldots,\mathfrak{c}_i>_{\mathrm{G}}$, where $<\mathfrak{c}_0,\ldots,\mathfrak{c}_i>_{\mathrm{G}}$ is the G-module generated by $\mathfrak{c}_0,\ldots,\mathfrak{c}_i$, then we define $\mathfrak{c}_{i+1}$ to be any element of $\mathrm{Cl}(\mathcal{O}_K)_\chi$ whose image in $\mathrm{Cl}(\mathcal{O}_K)_\chi/<\mathfrak{c}_0,\ldots,\mathfrak{c}_i>_{\mathrm{G}}$ is nontrivial and is equal to a class $\mathfrak{c}$ which restricts to the generator $\mathfrak{c}'$ of $\mathrm{Gal}(L/K)$ in Theorem \ref{Chebotarev} applied to $\beta:=\kappa(\mathfrak{a}_i)_\chi$, the image of $\kappa(\mathfrak{a}_i)$ in $(K^\times/(K^\times)^M)_\chi$, and $A:=\mathrm{Cl}(\mathcal{O}_K)_\chi/<\mathfrak{c}_0,\ldots,\mathfrak{c}_i>_{\mathrm{G}}$. Also we let $\lambda_{i+1}$ be any prime ideal of $\mathcal{O}_K$ prime to $\mathfrak{g}$ and satisfying Theorem \ref{Chebotarev} with the same conditions.
\item If $\mathrm{Cl}(\mathcal{O}_K)_\chi=<\mathfrak{c}_0,\ldots,\mathfrak{c}_i>_{\mathrm{G}}$ then we stop.
\end{itemize}
This construction of our classes $\mathfrak{c}_j$ implies that the ideals $\ell_j:=\lambda_j\cap\mathcal{O}_k\in\mathcal{S}(\mathfrak{g})$. Let $m_i$ be the order of $\kappa(\mathfrak{a}_i)_\chi$ in $K^\times/(K^\times)^M$, and let $t_i:=M/m_i$. By the assertion $(iii)$ of Theorem \ref{Chebotarev} we have $\varphi_{\ell_{i+1}}(\kappa(\mathfrak{a}_i)_\chi)=ut_i\lambda_{i+1}$, for some $u\in\ \mathbb{Z}/M\mathbb{Z}[\mathrm{G}]_\chi^\times$. But $\mathfrak{a}_{i+1}=\mathfrak{a}_i\ell_{i+1}$. Thus 
\begin{equation}\label{thonrouge}
[\kappa(\mathfrak{a}_{i+1})_\chi]_{\ell_{i+1}}=ut_i\lambda_{i+1},
\end{equation}
thanks to (\ref{marrakech}). Now, by the definition of $t_{i+1}$, the fractional ideal of $\mathcal{O}_K$ generated by $\kappa(\mathfrak{a}_{i+1})_\chi$ is a $t_{i+1}$-th power. Thus, we must have $t_{i+1}\vert t_i$. Actually, we can say more. Indeed, there exist $\zeta\in\mu_K$ and $z\in K^\times$ such that $\kappa(\mathfrak{a}_{i+1})_\chi=\zeta z^{t_{i+1}}$. Therefore, (\ref{marrakech}) and (\ref{thonrouge}) imply
\begin{equation}\label{toumarte}
z\mathcal{O}_K=(\lambda_{i+1})^{ut_i/t_{i+1}}(\prod_{j=1}^i\lambda_j^{u_j})\mathfrak{b}^{M/t_{i+1}},
\end{equation}
where $u_j\in\mathbb{Z}_p[\mathrm{G}]$ for all $j\in\{1,\ldots,i\}$ and $\mathfrak{b}$ is a fractional ideal of $\mathcal{O}_K$. But we see from (\ref{tachfine}) that $t_0\vert\#(\mathcal{O}_K^\times/\mathcal{E}_K)_\chi$, and since $t_{i+1}\vert t_0$ the integer $M/t_{i+1}$ annihilates $\mathrm{Cl}(\mathcal{O}_K)_\chi$. The identity (\ref{toumarte}) then implies
\begin{equation}\label{idriss}
(t_i/t_{i+1})\mathfrak{c}_{i+1}\in<\mathfrak{c}_0,\ldots,\mathfrak{c}_i>_{\mathrm{G}}.
\end{equation}
Let $\dim(\chi):=[\mathbb{Q}_p[\hat{\chi}(\mathrm{G})]:\mathbb{Q}_p]$, then (\ref{idriss}) implies
\begin{equation*}
\#\mathrm{Cl}(\mathcal{O}_K)_\chi\,\vert\,\prod_{j=1}^i(t_{j-1}/t_j)^{\dim(\chi)}\,\vert\, t_0^{\dim(\chi)}=\#(\mathcal{O}_K^\times/\mathcal{E}_K)_\chi.
\end{equation*} 
\end{proof}
\noindent{\bf Proof of Theorem \ref{tresgras}}. Let the hypotheses and notation be as in Theorem \ref{tresgras}. Let $\Psi$ be the irreducible rational character of $\mathrm{G}$ such that $\chi\vert\Psi$. The formula (\ref{indice2}) may be written as follows
\begin{equation*}
\prod_{\chi'\vert\Psi}\#\mathrm{Cl}(\mathcal{O}_K)_{\chi'}=\prod_{\chi'\vert\Psi}\#(\mathcal{O}_K^\times/\mathcal{E}_K)_{\chi'},
\end{equation*}
where $\chi'$ runs over the irreducible $\mathbb{Z}_p$-characters of $\mathrm{G}$ such that $\chi'\vert\Psi$. Moreover, the formula (\ref{division}) is satisfied for such characters $\chi'$ since $\chi\not\in\Xi_p$. This implies (\ref{gras}). \par
\vskip 5pt
\noindent{\bf Acknowledgement} The authors express their sincere thanks to the referee for his suggestions to improve the content of the paper.

\def\cprime{$'$} \def\cprime{$'$} \def\cprime{$'$}

\address{Laboratoire de math\'ematique\\ 16
Route de Gray\\ 25030 Besan\c con cedex\\ France}
{houkhaba@univ-fcomte.fr\\ sviguie@univ-fcomte.fr}

\end{document}